\theoremstyle{plain} 
\newtheorem{theorem}{Theorem} 
\newtheorem{corollary}{Corollary}
\newtheorem{lemma}{Lemma} 
\newtheorem{proposition}{Proposition}
\theoremstyle{definition} 
\newtheorem{definition}{Definition} 
\theoremstyle{remark}
\newcommand{\reo}{{\mathbb{R}}}
\newcommand{\ds}{\displaystyle}
\newcommand{\vp}{\varphi}
\newcommand{\cL}{{\mathcal{L}}}
\newcommand{\lp}[1]{\left( #1 \right)}
\newcommand{\ls}[1]{\left[ #1 \right]}
\newcommand{\lc}[1]{\left\{ #1 \right\}}
\newcommand{\vct}[1]{{\bf #1}}
\begin{document}

\title{1--Meixner random vectors}

\author{A. I. Stan*}
\address{ Department of Mathematics, The Ohio State University,
1465 Mount Vernon Avenue,
Marion, OH 43302, U.S.A.}
\email{stan.7@osu.edu*}
\author{F. Catrina}
 \address{ Department of Mathematics and  Computer Science,
St. John's University,
Queens, NY 11439, U.S.A.}
\email{catrinaf@stjohns.edu}
     
\maketitle

\begin{abstract}
A definition of $d$--dimensional $n$--Meixner random vectors is given first.
This definition involves the commutators of their
semi--quantum operators. After that we will focus on the $1$-Meixner random vectors,
and derive a system of $d$ partial differential equations satisfied by their
Laplace transform. We provide a set of necessary conditions for this system to
be integrable. We use these conditions to give a complete characterization of all
non--degenerate three--dimensional $1$--Meixner random vectors.
It must be mentioned that the three--dimensional case produces the first example in
which the components of a $1$--Meixner random vector cannot be reduced, via an injective
linear transformation, to three independent classic Meixner random variables.
\keywords{semi--quantum operators \and commutators \and Gamma distributions
\and 1--Meixner random vectors \and Laplace transform}

 \subjclass{ 42C05 \and 46L53}
\end{abstract}

\section{Introduction}
\label{intro}

Since its discovery in \cite{m34}, the class of Meixner random variables has been intensively
studied by many authors.
It seems that, among the six types of random variables belonging to this class (after a shifting
and re-scaling): Gaussian, Poisson, negative binomial, Gamma, two parameter hyperbolic secant,
and binomial, the two simplest should be the Gaussian and Poisson ones.
This is apparent from the fact that the principal Szeg\H{o}-Jacobi parameters,
$\{\omega_n\}_{n \geq 1}$, for these two
types of random variables, can be expressed as a linear function (with no constant term) of $n$,
\begin{eqnarray*}
\omega_n & = & tn,
\end{eqnarray*}
where $t$ is a nonnegative number,
while for the other four types, these parameters are quadratic functions (with no constant term)
of $n$,
\begin{eqnarray*}
\omega_n & = & \beta n^2 + (t - \beta)n,
\end{eqnarray*}
where either both $\beta$ and $t$ are non-negative, or $\beta$ is negative and $t$ is a negative integer
multiple of $\beta$ (hence $t$ is positive).\\
However, as it was pointed out in \cite{ps15}, from the point of view of the commutator between the
semi-quantum operators generated by each classic Meixner random variable, the two simplest are
the Gaussian and Gamma distributed ones.\\
There are some possible reasons why the Gamma distributed random
variables could be considered more ``basic" than the Poisson random variables. First of all, by
looking only at the simplicity of the principal Szeg\H{o}-Jacobi parameters, $\{\omega_n\}_{n \geq 1}$,
means to ignore the importance of the secondary Szeg\H{o}-Jacobi parameters, $\{\alpha_n\}_{n \geq 0}$,
which for all classic Meixner random variables are linear functions of $n$:
\begin{eqnarray*}
\alpha_n & = & \alpha n + \alpha_0,
\end{eqnarray*}
for all $n \geq 0$, where $\alpha$ and $\alpha_0$ are both fixed real numbers.
For the Gamma and Gaussian distributed random variables, the fixed real numbers $\alpha$ and $\beta$ are
not independent of each other, but they are linked by the relation:
\begin{eqnarray*}
\alpha^2 & = & 4\beta.
\end{eqnarray*}
The Szeg\H{o}-Jacobi parameters make sense in the one-dimensional case. It has been proposed in
\cite{aks03} and \cite{aks04} that a good replacement of the Szeg\H{o}-Jacobi parameters, in the
multi-dimensional case, is given by the quantum operators: creation, preservation, and annihilation
operators. For polynomially symmetric random vectors, the preservation operators vanish,
see \cite{aks03}.
For this reason, it is much easier to study the polynomially symmetric random vectors
than the non-symmetric ones. In the symmetric case, the multiplication operator generated by
each random variable (assumed to have finite
moments of all orders) is the sum of only two operators: creation and annihilation operators.
For non-symmetric random vectors, the preservation operators play a significant role.
In order to capture the effect of these operators, and still treat the multiplication operator
generated by each random variable as a sum of only two operators, in \cite{ps14}, each preservation
operator was split into two halves. One half was added to the corresponding creation operator, while
the other half was added to the annihilation operator. In this way, the semi-quantum operators: semi-creation and
semi-annihilation operators were defined, and the multiplication operator generated by each random variable
can also be written as a sum of only two semi-quantum operators. Moreover, by splitting the preservation
operators into two equal parts, the fact that each semi-creation operator is the polynomial dual
of its corresponding semi-annihilation operator was preserved. In this way, the non-symmetric random vectors
can be treated similarly to the symmetric ones.\\
It must also be mentioned that using the semi-quantum operators, rather than the quantum operators,
it was possible not only to give an elegant way to describe the Gamma and Gaussian random variables, using only one condition,
but to also come up with the definition of a countable family of Meixner classes. Each class
is determined by the number of nested commutators involving the semi-quantum operators,
see \cite{ps15}. Thus for example,
the Gamma and Gaussian use only one commutator, while the class of classic Meixner random variables is described by two nested commutators.\\
In \cite{ps15} the non-degenerate two dimensional $1$-Meixner random vectors were characterized.
The method used in that paper was particular to the two dimensional case, and did not give any indication
about how to proceed in the multi-dimensional case. In the two-dimensional case, the components of a $1$-Meixner random vector can be reduced,
via an injective affine transformation, to two independent classic Gamma or Gaussian random variables.\\
In this paper, we find first a system of $d$ linear partial differential equations satisfied by the Laplace transform of a non-degenerate
$d$--dimensional $1$--Meixner random vector. Then we find a set of necessary conditions for the integrability of this system of linear partial
differential equations. Finally, we give a complete characterization of all non--degenerate three--dimensional $1$--Meixner random
vectors. The important thing that will appear in our characterization is the fact that there are non--degenerate three--dimensional Meixner random
vectors whose components cannot be reduced, via any injective affine transformation, to three independent random variables.
This fact is of great importance, because it shows the power of the quantum and semi-quantum operators as natural
extensions of the classic Szeg\H{o}--Jacobi parameters to the multi-dimensional case.
\\
The paper is structured as follows.
In section 2, we give a minimal background of quantum and semi-quantum operators. In section 3, we review the Meixner random variables. In section 4, we present a set of simplifying assumptions and consistency conditions.
In section 5, we present a system of partial differential equations which has to be satisfied by
the Laplace transform of  any $d$-dimensional $1$-Meixner random vector,
for all natural numbers $d$.
In section 6, we find a necessary condition for the existence of solutions for this system.
In section 7, we describe all solutions of the system in the particular case $d=3$.
Finally, in the Appendix, we present calculations of the Laplace transforms of measures, that match the solutions found in the three--dimensional case.

\section{Background}
\label{S:bckg}

Throughout this paper we consider $d$ random variables, $X_1$, $X_2$, $\dots$, $X_d$, having finite moments
of all orders, and defined on the same probability space $(\Omega$, ${\mathcal F}$, $P)$, where $d$ is a fixed
natural number. We define the space:
\begin{eqnarray*}
F & := & \{f(X_1, X_2, \dots, X_d) \mid f \ {\rm is \ polynomial}\},
\end{eqnarray*}
and call it the space of all {\em polynomial random variables in} $X_1$, $X_2$, $\dots$, $X_d$. The polynomials $f$ in this definition are polynomials of $d$ variables with complex coefficients.\\
For each non-negative integer $n$, we define the space:
\begin{eqnarray*}
F_n & := & \{f(X_1, X_2, \dots, X_d) \mid f \ {\rm is \ polynomial \ of \ degree \ at \ most} \ n\}.
\end{eqnarray*}
Since $X_1$, $X_2$, $\dots$, $X_d$ have finite moments of all orders, we have:
\begin{eqnarray*}
{\mathbb C} \equiv F_0 \subseteq F_1 \subseteq F_2 \subseteq \cdots \subseteq F \subseteq L^2(\Omega, {\mathcal F}, P).
\end{eqnarray*}
Moreover, since for each $n \geq 0$, $F_n$ is a finite dimensional vector space, we conclude that
$F_n$ is a closed subspace of $L^2(\Omega, {\mathcal F}, P)$.\\
Since the spaces $\{F_n\}_{n \geq 0}$ are closed and contained one into another, we can orthogonalize them with
respect to the inner product, $\langle \cdot$, $\cdot \rangle$, of the space $L^2(\Omega, {\mathcal F}, P)$.
Thus, we define:
\begin{eqnarray*}
G_0 & := & F_0,
\end{eqnarray*}
and for all $n \geq 1$,
\begin{eqnarray*}
G_n & := & F_n \ominus F_{n - 1},
\end{eqnarray*}
that means $G_n$ is the orthogonal complement of $F_{n - 1}$ in $F_n$.
For each $n \geq 0$, we call $G_n$ the {\em $n$-th homogenous chaos space} generated by $X_1$, $X_2$,
$\dots$, $X_d$. We also call every random variable $f(X_1$, $X_2$, $\dots$, $X_d)$ in $G_n$ a {\em homogenous
polynomial random variable of degree} $n$ (here, the word ``homogenous" does not
have the classic meaning that all terms have the same degree).\\
Define the spaces $F_{-1} = G_{-1} = \{0\}$, where $\{0\}$ denotes the null space.\\
We change now the way that we view the random variables $X_1$, $X_2$, $\dots$, $X_d$, by regarding them as
the multiplication operators that they generate. That means, for each $i \in \{1$, $2$, $\dots$, $d\}$,
we consider the linear operator going from $F$ to $F$, defined by:
\begin{eqnarray*}
f\left(X_1, X_2, \dots, X_d\right) & \mapsto & X_if\left(X_1, X_2, \dots, X_d\right).
\end{eqnarray*}
For all $i \in \{1$, $2$, $\dots$, $d\}$, we denote this operator by $X_i$.\\
In what follows, instead of $f(X_1$, $X_2$, $\dots$, $X_d)$, we write briefly $f$.\\
Regarding the multiplication
operators $X_1$, $X_2$, $\cdots$, $X_d$, we have
the following lemma that can be found in \cite{an02}, Theorem 1, page 6, (see also \cite{aks03}, Lemma 2.1., page 487).
\begin{lemma}\label{def_lemma}
For all $i \in \{1$, $2$, $\dots$, $d\}$ and all non--negative integers $n$, we have:
\begin{eqnarray*}
X_iG_n & \bot & G_k,
\end{eqnarray*}
for all $k \neq n - 1$, $n$, $n + 1$, where ``$\bot$" means ``orthogonal to".
\end{lemma}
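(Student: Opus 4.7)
The plan is to reduce the claim to two complementary cases, $k \geq n+2$ and $k \leq n-2$, and dispatch each by a simple degree count together with the self--adjointness of the multiplication operator $X_i$.

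First I would record the elementary observation that multiplication by $X_i$ raises polynomial degree by at most one: for every polynomial $f$ of degree at most $n$, $X_if$ is a polynomial of degree at most $n+1$. Consequently $X_iF_n\subseteq F_{n+1}$, and in particular $X_iG_n\subseteq F_{n+1}$. This already settles the case $k\geq n+2$: by the very construction of the orthogonal decomposition, $G_k$ is contained in $F_k$ and is orthogonal to $F_{k-1}$; since $k-1\geq n+1$, we have $F_{n+1}\subseteq F_{k-1}$, hence $G_k\perp F_{n+1}\supseteq X_iG_n$.

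The case $k\leq n-2$ is handled by ``moving $X_i$ to the other side.'' For any $f\in G_n$ and $g\in G_k$, since $X_i$ is a real--valued random variable, multiplication by $X_i$ is self--adjoint on $F$ with respect to the $L^2$ inner product: $\langle X_if,g\rangle = E[X_if\overline{g}] = E[f\overline{X_ig}] = \langle f,X_ig\rangle$. Now the same degree bound gives $X_ig\in F_{k+1}$, and from $k+1\leq n-1$ we get $X_ig\in F_{n-1}$. By definition $G_n=F_n\ominus F_{n-1}$, so $\langle f,X_ig\rangle=0$, and therefore $\langle X_if,g\rangle=0$.

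The two cases combined yield $X_iG_n\perp G_k$ for every $k\neq n-1,n,n+1$. The only step that is more than purely formal is the self--adjointness move in the second case, which requires that each $X_i$ take real values (so that complex conjugation commutes with multiplication by $X_i$); this is implicit in the paper's setting where the $X_i$ are real random variables on $(\Omega,\mathcal{F},P)$, while the polynomial coefficients are allowed to be complex. No other subtlety arises, and the argument goes through uniformly for all $i$ and all $n\geq 0$, using the convention $F_{-1}=\{0\}$ to cover the boundary case $n=0$.
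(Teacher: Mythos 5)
Your proof is correct. The paper itself does not prove this lemma---it is quoted from \cite{an02} (Theorem 1) and \cite{aks03} (Lemma 2.1)---and your two-case argument (the degree count $X_iG_n\subseteq X_iF_n\subseteq F_{n+1}$ combined with $G_k\perp F_{k-1}$ for $k\geq n+2$, and the self-adjointness of multiplication by the real random variable $X_i$ to move it onto $g$, giving $X_ig\in F_{k+1}\subseteq F_{n-1}\perp G_n$ for $k\leq n-2$) is exactly the standard proof found in those references, so nothing further is needed.
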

From this lemma, we conclude that, for all $i \in \{1$, $2$, $\dots$, $d\}$ and all $n \geq 0$, we have:
\begin{eqnarray*}
X_iG_n & \subseteq & G_{n - 1} \oplus G_n \oplus G_{n + 1}.
\end{eqnarray*}
That means if $f \in G_n$, there exist and are unique three homogenous polynomial random variables:
$f_{n - 1, i} \in G_{n - 1}$, $f_{n, i} \in G_n$, and $f_{n + 1, i} \in G_{n + 1}$, such that:
\begin{eqnarray*}
X_if & = & f_{n - 1, i} + f_{n, i} + f_{n + 1, i}.
\end{eqnarray*}
We define the following linear operators:
\begin{eqnarray*}
D_n^-(i) : G_n & \to & G_{n - 1},\\
D_n^-(i)f & := & f_{n - 1, i},
\end{eqnarray*}
and call $D_n^-(i)$ an {\em annihilation operator}, since it decreases the degree of a homogenous polynomial
by one unit,
\begin{eqnarray*}
D_n^0(i) : G_n & \to & G_n,\\
D_n^0(i)f & := & f_{n, i},
\end{eqnarray*}
and call $D_n^0(i)$ a {\em preservation operator}, because it preserves the degree of a homogenous polynomial,
and
\begin{eqnarray*}
D_n^+(i) : G_n & \to & G_{n + 1},\\
D_n^+(i)f & := & f_{n + 1, i},
\end{eqnarray*}
and call $D_n^+(i)$ a {\em creation operator}, due to the fact that it increases the degree of a homogenous polynomial by one unit.\\
Lemma \ref{def_lemma} can be written now:
\begin{lemma}
For all $1 \leq i \leq d$ and all $n \geq 0$, we have:
\begin{eqnarray*}
X_i|G_n & = & D_n^-(i) + D_n^0(i) + D_n^+(i),
\end{eqnarray*}
where $X_i|G_n$ is the restriction of the multiplication operator $X_i$ to the space $G_n$.
\end{lemma}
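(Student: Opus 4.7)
The plan is to observe that this lemma is essentially a reformulation of the preceding Lemma~\ref{def_lemma} in the notation just introduced, so the proof is a direct unpacking of definitions. Here is how I would structure it.

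First, I would fix $i \in \{1, 2, \dots, d\}$ and $n \geq 0$, and pick an arbitrary $f \in G_n$. By Lemma~\ref{def_lemma}, $X_i f$ is orthogonal to every $G_k$ with $k \neq n-1, n, n+1$. Since $F = \bigoplus_{k \geq 0} G_k$ is the orthogonal decomposition of the polynomial space into homogeneous chaos spaces, this orthogonality forces
\begin{eqnarray*}
X_i f & \in & G_{n - 1} \oplus G_n \oplus G_{n + 1}.
\end{eqnarray*}
Because the three summands on the right are pairwise orthogonal, there exist \emph{unique} elements $f_{n-1,i} \in G_{n-1}$, $f_{n,i} \in G_n$, and $f_{n+1,i} \in G_{n+1}$ with $X_i f = f_{n-1,i} + f_{n,i} + f_{n+1,i}$.

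Next, I would invoke the definitions of the three operators introduced just before the statement: $D_n^-(i) f := f_{n-1,i}$, $D_n^0(i) f := f_{n,i}$, and $D_n^+(i) f := f_{n+1,i}$. Substituting these into the decomposition gives
\begin{eqnarray*}
X_i f & = & D_n^-(i) f + D_n^0(i) f + D_n^+(i) f,
\end{eqnarray*}
and since $f \in G_n$ was arbitrary, this is precisely the identity $X_i|G_n = D_n^-(i) + D_n^0(i) + D_n^+(i)$ as operators on $G_n$.

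There is no genuine obstacle here; the only point requiring care is confirming that the decomposition $f_{n-1,i} + f_{n,i} + f_{n+1,i}$ is well-defined, which rests on the uniqueness of the orthogonal decomposition of $X_i f$ into components along the mutually orthogonal spaces $G_{n-1}$, $G_n$, $G_{n+1}$. This uniqueness is what makes each of $D_n^-(i)$, $D_n^0(i)$, and $D_n^+(i)$ a well-defined linear operator in the first place, so once Lemma~\ref{def_lemma} is in hand, the restatement follows immediately.
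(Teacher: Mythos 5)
Your proof is correct and matches the paper exactly: the paper offers no separate argument, presenting this lemma as a direct restatement of Lemma~\ref{def_lemma} via the defining decomposition $X_i f = f_{n-1,i} + f_{n,i} + f_{n+1,i}$ and the definitions of $D_n^-(i)$, $D_n^0(i)$, $D_n^+(i)$, which is precisely what you do.
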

We extend now, by linearity, the definition of the annihilation, preservation, and creation operators to the space $F$ of all polynomial random variables, in the following way. If $f \in F$, then there exist
and are unique homogenous polynomial random variables $f_0 \in G_0$, $f_1 \in G_1$, $f_2 \in G_2$, $\dots$, with only finitely many of them being different from zero, such that:
\begin{eqnarray*}
f & = & f_0 + f_1 + f_2 + \cdots.
\end{eqnarray*}
We define the $i$-th {\em annihilation operator} by:
\begin{eqnarray*}
a^-(i)f & = & D_0^-(i)f_0 + D_1^-(i)f_1 + D_2^-(i)f_2 + \cdots,
\end{eqnarray*}
$i$-th {\em preservation operator} by:
\begin{eqnarray*}
a^0(i)f & = & D_0^0(i)f_0 + D_1^0(i)f_1 + D_2^0(i)f_2 + \cdots,
\end{eqnarray*}
and
$i$-th {\em creation operator} by:
\begin{eqnarray*}
a^+(i)f & = & D_0^+(i)f_0 + D_1^+(i)f_1 + D_2^+(i)f_2 + \cdots.
\end{eqnarray*}
Lemma~\ref{def_lemma} becomes now:
\begin{lemma}
For all $i \in \{1$, $2$, $\dots$, $d\}$, we have:
\begin{eqnarray*}
X_i & = & a^-(i) + a^0(i) + a^+(i),
\end{eqnarray*}
where the domain of $X_i$, $a^-(i)$, $a^0(i)$, and $a^+(i)$ is considered to be
the space $F$ of all polynomial random variables.
\end{lemma}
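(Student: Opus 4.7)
The plan is to reduce the identity on the full space $F$ to the previous lemma on each homogeneous chaos space $G_n$, exploiting the fact that every $f \in F$ has a \emph{finite} chaos decomposition. First, fix $i \in \{1, 2, \dots, d\}$ and take an arbitrary $f \in F$. Since $F = \bigcup_{N \geq 0} F_N$ and $F_N = G_0 \oplus G_1 \oplus \cdots \oplus G_N$, there is a unique expansion $f = f_0 + f_1 + f_2 + \cdots$ with $f_n \in G_n$ and $f_n = 0$ for all sufficiently large $n$. The well-posedness of this decomposition was already noted in the excerpt, since each $F_n$ is closed and finite-dimensional.

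Next, I would use linearity of the multiplication operator $X_i$ on $F$ to write
$$X_i f \;=\; \sum_{n \geq 0} X_i f_n,$$
where the sum is finite. Applying the preceding lemma (the restricted version) to each term gives
$$X_i f_n \;=\; D_n^-(i) f_n + D_n^0(i) f_n + D_n^+(i) f_n.$$
Substituting and regrouping the three types of contributions — which is legitimate because the sum over $n$ has only finitely many nonzero terms — yields
$$X_i f \;=\; \sum_{n \geq 0} D_n^-(i) f_n \;+\; \sum_{n \geq 0} D_n^0(i) f_n \;+\; \sum_{n \geq 0} D_n^+(i) f_n.$$
The three summands on the right-hand side are, by the definitions given just above the lemma, exactly $a^-(i) f$, $a^0(i) f$, and $a^+(i) f$, respectively. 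Hence $X_i f = a^-(i) f + a^0(i) f + a^+(i) f$, and since $f \in F$ was arbitrary, the operator identity follows on the domain $F$.

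There is essentially no genuine obstacle: the lemma is a routine linear extension argument, and the finiteness of the chaos decomposition makes all rearrangements trivially valid without any analytic convergence considerations (in contrast with analogous statements on infinite-dimensional Fock-type spaces, where one would need to restrict to a suitable algebraic core). The only points to state cleanly are the uniqueness of the decomposition $f = \sum_n f_n$, the vanishing of $f_n$ for large $n$, and the fact that the definitions of $a^\pm(i)$ and $a^0(i)$ extend the restricted operators $D_n^\pm(i)$ and $D_n^0(i)$ additively across chaos components.
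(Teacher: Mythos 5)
Your proof is correct and follows exactly the route the paper intends: the paper states this lemma without a separate proof, treating it as an immediate consequence of the definitions of $a^-(i)$, $a^0(i)$, $a^+(i)$ by linear extension of $D_n^-(i)$, $D_n^0(i)$, $D_n^+(i)$ across the finite chaos decomposition $f = f_0 + f_1 + \cdots$, which is precisely the regrouping argument you wrote out. Your explicit remarks on uniqueness of the decomposition and finiteness of the sum are the only details the paper leaves implicit.
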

We call the operators: $\{a^-(i)\}_{1 \leq i \leq d}$, $\{a^0(i)\}_{1 \leq i \leq d}$,
and $\{a^+(i)\}_{1 \leq i \leq d}$ the {\em joint quantum operators of}
$X_1$, $X_2$, $\dots$, $X_d$. It is not hard to see that, for all
$i \in \{1$, $2$, $\dots$, $d\}$, we have:
\begin{eqnarray*}
\left(a^+(i)\right)^* & = & a^-(i)
\end{eqnarray*}
and
\begin{eqnarray*}
\left(a^0(i)\right)^* & = & a^0(i),
\end{eqnarray*}
where the above duality is a {\em polynomial duality}, that means,
for all $f$ and $g$ in $F$, we have:
\begin{eqnarray*}
\langle a^+(i)f, g \rangle & = & \langle f, a^-(i)g \rangle
\end{eqnarray*}
and
\begin{eqnarray*}
\langle a^0(i)f, g \rangle & = & \langle f, a^0(i)g \rangle.
\end{eqnarray*}
It is clear that for all $i$ and $j$ in $\{1$, $2$, $\dots$, $d\}$,
the multiplication operators by $X_i$ and $X_j$ commute, that means:
\begin{eqnarray*}
X_iX_j & = & X_jX_i.
\end{eqnarray*}
It was shown in \cite{aks03} and \cite{aks04}, that the commutativity of the multiplication
operators by $X_i$ and $X_j$ is equivalent, in terms of the commutators of the joint quantum operators,
to the following set of rules:
\begin{eqnarray}
\left[a^-(i), a^-(j)\right] & = & 0, \label{--}
\end{eqnarray}
\begin{eqnarray}
\left[a^-(i), a^0(j)\right] & = & \left[a^-(j), a^0(i)\right], \label{-0}
\end{eqnarray}
\begin{eqnarray}
\left[a^0(i), a^0(j)\right] & = & \left[a^-(j), a^+(i)\right] - \left[a^-(i), a^+(j)\right], \label{-0+}
\end{eqnarray}
\begin{eqnarray}
\left[a^0(i), a^+(j)\right] & = & \left[a^0(j), a^+(i)\right], \label{0+}
\end{eqnarray}
and
\begin{eqnarray}
\left[a^+(i), a^+(j)\right] & = & 0. \label{++}
\end{eqnarray}
We refer to the commutation rules (\ref{--}), (\ref{-0}), (\ref{-0+}),
(\ref{0+}), and (\ref{++}), as the {\em axioms of Commutative Probability}.\\
For all $i \in \{1$, $2$, $\dots$, $d\}$, we define the linear operators:
\begin{eqnarray*}
U_i, V_i : F & \to & F,
\end{eqnarray*}
\begin{eqnarray*}
U_i & = & a^-(i) + \frac{1}{2}a^0(i)
\end{eqnarray*}
and
\begin{eqnarray*}
V_i & = & a^+(i) + \frac{1}{2}a^0(i).
\end{eqnarray*}
For all $i \in \{1$, $2$, $\dots$, $d\}$, we call $U_i$ a {\em semi-annihilation operator},
and $V_i$ a {\em semi--creation operator}. We also call
$\{U_i\}_{1 \leq i \leq d}$ and $\{V_i\}_{1 \leq i \leq d}$ the {\em joint semi-quantum operators}
generated by $X_1$, $X_2$, $\dots$, $X_d$.\\
It is now clear that, for all $i \in \{1$, $2$, $\dots$, $d\}$, we have:
\begin{eqnarray*}
X_i & = & U_i + V_i
\end{eqnarray*}
and
\begin{eqnarray*}
V_i^* & = & U_i,
\end{eqnarray*}
where the above duality is, as before, only a polynomial duality.\\
As it was shown in \cite{ps14}, the axioms of Commutative Probability,
can be written now in terms of the commutators involving the joint
semi-quantum operators as any one of the following three equivalent statements:

\begin{enumerate}

\item For all $(i,j) \in \{1$, $2$, $\dots$, $d\}^2$, we have:
\begin{eqnarray*}
\left[U_i, X_j\right] & = & \left[U_j, X_i\right].
\end{eqnarray*}

\item For all $(i,j) \in \{1$, $2$, $\dots$, $d\}^2$, we have:
\begin{eqnarray*}
\left[X_i, V_j\right] & = & \left[X_j, V_i\right].
\end{eqnarray*}

\item For all $(i,j) \in \{1$, $2$, $\dots$, $d\}^2$,
the operators $[U_i$, $X_j]$ and $[X_i$, $V_j]$ are polynomially self--adjoint.
\end{enumerate}
Let us see what this general theory becomes in the one dimensional case, $d = 1$.
For $d = 1$, there is no need to use subscripts since we are dealing with only one random
variable, $X$, one creation, $a^+$, one preservation, $a^0$, one annihilation, $a^-$,
one semi--creation, $V$, and one semi--annihilation operator, $U$.\\
For all $n \geq 0$, since the co--dimension of the space $F_{n - 1}$
(spanned by $1$, $X$, $\dots$, $X^{n - 1}$) into $F_n$ (spanned by $1$, $X$, $\dots$, $X^{n - 1}$,
$X^n$) is at most $1$, the homogenous chaos space $G_n = F_n \ominus F_{n - 1}$ has dimension
at most $1$. If the random variable $X$ takes on only a finite number, $k$, of different values,
with positive probability, then we have:
\begin{eqnarray*}
\dim(G_n) & = & \left\{\begin{array}{ccc} 1 & {\rm if} & n \leq k - 1\\
0 & {\rm if} & n \geq k \end{array}\right.,
\end{eqnarray*}
where ``$\dim$" denotes  the dimension.
If the probability distribution, of the random variable $X$, has an infinite support, then, for all
$n \geq 0$, we have:
\begin{eqnarray*}
\dim(G_n) & = & 1.
\end{eqnarray*}
If $\dim(G_n) = 1$, then there exists a unique polynomial $f_n \in G_n$, having the leading coefficient equal to $1$.
Because $XG_n \subseteq G_{n + 1} + G_n + G_{n - 1}$, there exist $\alpha_n$ and $\omega_n$ real numbers, such that:
\begin{eqnarray*}
Xf_n(X) & = & f_{n + 1}(X) + \alpha_nf_n(X) + \omega_nf_{n - 1}(X).
\end{eqnarray*}
For $n = 0$, because $f_{-1} = 0$, we can choose $\omega_0$ as we please.
The real numbers $\{\alpha_n\}_{n \geq 0}$ and
$\{\omega_n\}_{n \geq 1}$ are called the {\em Szeg\H{o}--Jacobi parameters of} $X$.\\
In the case $d = 1$, for all $n \geq 0$, we have:
\begin{eqnarray*}
a^-f_n & = & \omega_nf_{n - 1},
\end{eqnarray*}
\begin{eqnarray*}
a^0f_n & = & \alpha_nf_n,
\end{eqnarray*}
\begin{eqnarray*}
a^+f_n & = & f_{n + 1},
\end{eqnarray*}
\begin{eqnarray*}
Uf_n & = & \frac{\alpha_n}{2}f_n + \omega_nf_{n - 1},
\end{eqnarray*}
and
\begin{eqnarray*}
Vf_n & = & f_{n + 1} + \frac{\alpha_n}{2}f_n.
\end{eqnarray*}

\section{$d$-dimensional $n$-Meixner random vectors: definition and general properties}
\label{S:GenDef}
We review now the classic Meixner random variables.
\begin{definition}
A real valued random variable $X$, having finite moments of all orders,
is called a {\em classic Meixner random variable} if
its Szeg\H{o}-Jacobi parameters are of the form:
\begin{eqnarray*}
\alpha_n & = & \alpha n + \alpha_0
\end{eqnarray*}
and
\begin{eqnarray*}
\omega_n & = & \beta n^2 + (t - \beta)n,
\end{eqnarray*}
for all $n \geq 1$, where $\alpha$, $\alpha_0$, $\beta$, and $t$ are fixed
real numbers such that one of the two possible scenarios happens:
\begin{enumerate}
\item
$\beta \geq 0$ and $t \geq 0$.

\item
$\beta < 0$ and $t \in -{\mathbb N}\beta$.
\end{enumerate}
\end{definition}
\noindent Of course, if $t = 0$, then $\omega_1 = 0$, and so $X$ is a constant random variable,
since the $L^2$-norm of the $1$-degree monic orthogonal polynomial is $\parallel f_1\parallel_2^2 = \omega_1 = 0$.
This case is not interesting, and we are going to assume that $t > 0$.\\
We may also assume that $\alpha \geq 0$ since otherwise, we can replace $X$ by $-X$.\\
There are six types of Meixner random variables:
\begin{itemize}
\item If $\alpha = \beta = 0$, then $X$
is a {\em Gaussian} random variable, i.e., a continuous random variable given by the density function
\begin{eqnarray*}
f(x) & = & \frac{1}{\sqrt{2\pi t}}e^{-(x - \alpha_0)^2/(2t)}.
\end{eqnarray*}

\item If $\beta = 0$ and $\alpha \neq 0$, then $X$ is a shifted and re-scaled {\em Poisson}
random variable, i.e.,
\begin{eqnarray*}
\mu_X & = & \sum_{k =
0}^{\infty}\frac{\lambda^k}{k!}e^{-\lambda}\delta_{\alpha(k -
\lambda) + \alpha_0},
\end{eqnarray*}
where $\lambda := t/\alpha^2$.

\item If $\beta > 0$ and $\alpha^2 > 4\beta$, then $X$ is a
shifted {\em Pascal (negative binomial)} random variable, i.e.,
\begin{eqnarray*}
\mu_X & = & \sum_{k = 0}^{\infty}\frac{\Gamma(r +
k)}{k!\Gamma(r)}p^r(1 - p)^k\delta_{k - [2t/(\alpha + d)] +
\alpha_0},
\end{eqnarray*}
where $d := \sqrt{\alpha^2 - 4\beta}$, $p := 2d/(\alpha + d)$, $r :=
t/\beta$.

\item If $\beta > 0$ and $\alpha^2 = 4\beta$, then $X$ is a shifted
and re--scaled {\em Gamma distributed} random variable with shift parameter
$2t/\alpha$ and scaling parameter $\alpha/2$, i.e.,
\begin{eqnarray*}
f(x) & = &
\frac{2^{2t/\alpha}}{\alpha^{2t/\alpha}\Gamma(2t/\alpha)}x^{(2t/\alpha)
- 1}e^{-2x/\alpha}1_{(0, \infty)}.
\end{eqnarray*}

\item If $\beta > 0$ and $\alpha^2 < 4\beta$, then up to a
translation, $X$ is a {\em two parameter hyperbolic
secant} random variable:
\begin{eqnarray*}
f(x) & = & ce^{2\theta x/\gamma}\left|\Gamma(k + ix\gamma)\right|^2,
\end{eqnarray*}
where $\gamma := \sqrt{4\beta - \alpha^2}$ and $\gamma + i\alpha =
re^{i\theta}$, with $-\pi/2 < \theta < \pi/2$, $k := 2t/(r\gamma)$.

\item If $\beta < 0$, then $t \in -{\mathbb N}\beta$, and in this case,
up to a shifting and re-scaling, $X$ is a {\em binomial} random variable:
\begin{eqnarray*}
\mu_X & = & \sum_{k = 0}^n\binom{n}{k}p^k(1 - p)^{n - k}\delta_k,
\end{eqnarray*}
where $n := -t/\beta$, $p := (1/2) \pm (1/2)\sqrt{c/(4 + c)}$, and $c :=
-\alpha^2/\beta \geq 0$.

\end{itemize}

In \cite{ps14}, it was shown that, the shifted and re--scaled Gamma distributed random variables
(Meixner with $\alpha^2 = 4\beta > 0$) and the Gaussian random variables (Meixner with
$\alpha = \beta = 0$), are exactly those random variables $X$, having finite
moments of all orders, for which the commutator
between the semi--annihilation operator $U$ and $X$ is of the form:
\begin{eqnarray}
\left[U, X\right] & = & bX + cI, \label{UX}
\end{eqnarray}
where $b$ and $c$ are real numbers.
Using the polynomial duality between $U$ and $V$, where $V$ denotes the
semi-creation operator of $X$, this condition is equivalent to:
\begin{eqnarray}
\left[X, V\right] & = & bX + cI. \label{VX}
\end{eqnarray}
We must mention that the equality (\ref{UX}) makes sense since one of the axioms
of Commutative Probability says that $[U$, $X]$ is a polynomially self--adjoint operator.
The shifted multiplication operator $bX + cI$ from the right of (\ref{UX}) is clearly
self-adjoint. If $[U$, $X]$ where not self adjoint, then equality (\ref{UX}) would have
been impossible.\\
It was also shown in \cite{ps16}, that the classic Meixner random variables (all six of them) are
exactly those random variables $X$, having finite moments of all order, for which the double
commutator $[[U$, $X]$, $X]$ is of the form:
\begin{eqnarray}
\left[\left[U, X\right], X\right] & = & bV - bU \label{bV-bU}\\
& = & b(X - 2U),\nonumber
\end{eqnarray}
where $b$ is a real number. One may wonder why the right-hand side of (\ref{bV-bU}) looks much different
than the right-hand side of \eqref{UX}. The reason is that while the commutator $[U$, $X]$,
from the left-hand side of (\ref{UX}), is polynomially self-adjoint, the double commutator
$[[U$, $X]$, $X]$, from the left-hand side of (\ref{bV-bU}) is polynomially anti-self-adjoint.
Thus, the coefficients of $U$ and $V$, in the right-hand side of (\ref{bV-bU}), must be opposite one to
another. \\
Observe that when we have an odd number of nested commutators, we obtain a polynomially self-adjoint
operator, while an even number of nested commutators creates a polynomially anti-self-adjoint operator.
The following definition of different types of Meixner random vectors was proposed in \cite{ps15}.
\begin{definition}
Let $X_1$, $X_2$, $\dots$, $X_d$ be $d$ random variables having
finite moments of all orders. Let $n$ be a natural number.
We say that $(X_1$, $X_2$, $\dots$, $X_d)$ is a {\em $d$--dimensional $n$--Meixner random vector} if:
\begin{itemize}
\item If $n$ is odd, then
for all $i$, $i_1$, $i_2$, $\dots$, $i_n$ in $\{1$, $2$, $\dots$, $d\}$, we have:
\begin{eqnarray*}
\left[\cdots \left[\left[U_{i}, X_{i_1}\right], X_{i_2}\right], \cdots, X_{i_n}\right] & = & \sum_{j = 1}^d
b_{i, i_1i_2 \dots i_n, j}X_j + c_{i, i_1i_2 \dots i_n}I,
\end{eqnarray*}
for some real numbers $b_{i, i_1i_2 \dots i_n, j}$ and $c_{i, i_1i_2 \dots i_n}$, $1 \leq j \leq d$.

\item If $n$ is even, then
for all $i$, $i_1$, $i_2$, $\dots$, $i_n$ in $\{1$, $2$, $\dots$, $d\}$, we have:
\begin{eqnarray*}
\left[\cdots \left[\left[U_{i}, X_{i_1}\right], X_{i_2}\right], \cdots, X_{i_n}\right] & = & \sum_{j = 1}^d
b_{i, i_1i_2 \dots i_n, j}(V_j - U_j)\\
 & = & \sum_{j = 1}^db_{i, i_1i_2 \dots i_n, j}(X_j - 2U_j),
\end{eqnarray*}
for some real numbers $b_{i, i_1i_2 \dots i_n, j}$, $1 \leq j \leq d$.
\end{itemize}
Moreover, we say that the random vector $(X_1$, $X_2$, $\dots$, $X_d)$ is {\em non--degenerate}
if the operators $I$, $X_1$, $X_2$, \dots, $X_d$ are linearly independent. If $\mu$ denotes the
joint probability distribution of $X_1$, $X_2$, $\dots$, $X_d$, and the space of
all polynomial
functions of $d$ variables, $F(x_1$, $x_2$, $\dots$, $x_d)$, is dense in $L^2({\mathbb R}^d$, $\mu)$, then
the non-degeneracy condition is equivalent to the fact that
$1$, $X_1$, $X_2$, $\dots$, $X_d$ are linearly independent as random variables, where $1$ denotes the constant random variable equal to $1$.
\end{definition}
According to this definition, the shifted and re--scaled Gamma and Gaussian distributed random variables
form the one--dimensional $1$--Meixner random vectors (variables).
The classic Meixner random variables (all six types of them) are precisely the one--dimensional $2$--Meixner random vectors.
We would like to stress that, this definition, that uses the semi--quantum operators, permits us to include
all the six types of classic Meixner random variables. It also allows us to keep the number of commutator conditions to a minimum. Moreover, it allows us to study not only $3$--Meixner, $4$--Meixner, $\dots$ random variables, but also random vectors.\\
In \cite{ps14} is was shown how this definition, employing the commutator between $U$ and $X$, can be used
effectively to recover first the moments and then the probability distributions of the $1$-Meixner
random variables. In \cite{ps16}, it was shown how the definition using the double commutators can be applied
to recover the probability distributions of all the classic Meixner random variables.
In \cite{ps15}, the first step in moving from one dimension to two dimensions was achieved, and all
non-degenerate two dimensional $1$-Meixner random vectors were described.
In this paper, we will find a system of differential equations satisfied by the Laplace transform of each non--degenerate
$d$--dimensional $1$--Meixner random vector, for every finite dimension $d$, and characterize all
the non-degenerate three dimensional $1$-Meixner random vectors.

\section{$d$-dimensional $1$-Meixner random vectors: simplifying assumptions and consistency
conditions}

In this section we make some simplifying assumptions that will ease our work in the next section.
We also establish some important consistency conditions. The following is a very simple fact to check.
\begin{proposition} \label{transmissibility_proposition}
Let $X_1$, $X_2$, $\dots$, $X_d$ be $d$-random variables defined on the same probability space
$(\Omega$, ${\mathcal F}$, $P)$ and having finite moments of all orders. Let $\{U_i\}_{1 \leq i \leq d}$
and $\{V_i\}_{1 \leq i \leq d}$ be the joint semi-annihilation and semi-creation operators, respectively,
generated by $X_1$, $X_2$, $\dots$, $X_d$.
If $A = (a_{i, j})_{1 \leq i, j \leq d}$ is a $d \times d$ invertible matrix with real entries, and
$b = (b_i)_{1 \leq i \leq d}$ is a vector in ${\mathbb R}^d$, then if we define the random variables:
\begin{eqnarray*}
X_1' & := & a_{1, 1}X_1 + a_{1, 2}X_2 + \cdots + a_{1, d}X_d + b_1 \nonumber\\
X_2' & := & a_{2, 1}X_1 + a_{2, 2}X_2 + \cdots + a_{2, d}X_d + b_2 \nonumber\\
\vdots & \vdots & \vdots \\
X_d' & := & a_{d, 1}X_1 + a_{d, 2}X_2 + \cdots + a_{d, d}X_d + b_d,
\end{eqnarray*}
then $X_1'$, $X_2'$, $\dots$, $X_d'$ have finite moments of all orders, and their
joint semi-quantum operators satisfy:
\begin{eqnarray*}
U_1' & = & a_{1, 1}U_1 + a_{1, 2}U_2 + \cdots + a_{1, d}U_d + b_1 \nonumber\\
U_2' & = & a_{2, 1}U_1 + a_{2, 2}U_2 + \cdots + a_{2, d}U_d + b_2 \nonumber\\
\vdots & \vdots & \vdots \\
U_d' & = & a_{d, 1}U_1 + a_{d, 2}U_2 + \cdots + a_{d, d}U_d + b_d,
\end{eqnarray*}
and similar formulas hold for their semi-creation operators.
Moreover, if $(X_1$, $X_2$, $\dots$, $X_d)$ is non-degenerate, then
$(X_1'$, $X_2'$, $\dots$, $X_d')$ is also non-degenerate.
\end{proposition}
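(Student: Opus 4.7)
This is essentially a bookkeeping proposition. The key observation is that an invertible affine change of coordinates preserves the entire ladder $\{F_n\}$ and hence the chaos decomposition $\{G_n\}$, so the quantum and semi-quantum operators of the transformed vector can be read off directly from the three-term decomposition of $X_i' f$ for $f \in G_n$. Finite moments of $X_1', \ldots, X_d'$ are immediate: each $X_i'$ is a polynomial of degree one in $X_1, \ldots, X_d$, each of which has finite moments of all orders.

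First I would establish $F_n' = F_n$ for every $n \geq 0$, where $F_n'$ denotes the polynomial space attached to $(X_1', \ldots, X_d')$. Since each $X_i' \in F_1$, induction on $n$ gives $F_n' \subseteq F_n$; the invertibility of $A$ lets us solve for each $X_i$ as a degree-one polynomial in the $X_j'$, so running the same argument in reverse yields $F_n \subseteq F_n'$. Because $G_n = F_n \ominus F_{n-1}$ is defined intrinsically from the two $L^2$-closed subspaces, we conclude $G_n' = G_n$ for all $n$.

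Next, fix $f \in G_n$. Using $X_i' = \sum_j a_{i,j} X_j + b_i$ and $X_j = a^-(j) + a^0(j) + a^+(j)$ componentwise,
\begin{eqnarray*}
X_i' f & = & \sum_{j=1}^d a_{i,j}\, a^-(j)f \;+\; \left(\sum_{j=1}^d a_{i,j}\, a^0(j)f + b_i f\right) \;+\; \sum_{j=1}^d a_{i,j}\, a^+(j)f.
\end{eqnarray*}
The three grouped summands lie in $G_{n-1}$, $G_n$, and $G_{n+1}$ respectively (using $b_i f \in G_n$). By the uniqueness of the decomposition, the quantum operators of the primed vector satisfy $a^{-\prime}(i) = \sum_j a_{i,j} a^-(j)$, $\,a^{0\prime}(i) = \sum_j a_{i,j} a^0(j) + b_i I$, and $a^{+\prime}(i) = \sum_j a_{i,j} a^+(j)$ on every $G_n$, hence on all of $F$ by linearity. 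Substituting into $U_i' = a^{-\prime}(i) + \frac{1}{2} a^{0\prime}(i)$ and the analogous formula for $V_i'$ then yields the claimed identities, the additive scalar being split symmetrically between $U_i'$ and $V_i'$ so as to preserve both $U_i' + V_i' = X_i'$ and the polynomial duality $V_i'{}^{\ast} = U_i'$.

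Finally, for non-degeneracy, suppose $c_0 I + \sum_i c_i X_i' = 0$. Substituting the expression for $X_i'$ gives
\begin{eqnarray*}
\left(c_0 + \sum_{i=1}^d c_i b_i\right) I + \sum_{j=1}^d \left(\sum_{i=1}^d c_i a_{i,j}\right) X_j & = & 0.
\end{eqnarray*}
Non-degeneracy of $(X_1, \ldots, X_d)$ forces every coefficient to vanish; the invertibility of $A$ then forces $c_1 = \cdots = c_d = 0$, and hence $c_0 = 0$. No step here is genuinely hard; the one point to flag is that the constant $b_i$ enters only the preservation component of the decomposition of $X_i' f$, which (after dividing by two in the definitions of $U$ and $V$) accounts for how it distributes across the semi-quantum operators.
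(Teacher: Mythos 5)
Your argument is correct, and it is the natural verification: show that the invertible affine change of variables preserves each $F_n$ and hence each chaos space $G_n$, then read the primed quantum operators off the graded decomposition of $X_i'f$, and finally substitute into the definitions of $U_i'$ and $V_i'$. The paper offers no proof of this proposition (it is recorded as ``a very simple fact to check''), so there is nothing to compare against beyond noting that this is surely the intended argument; your moment bound and the non-degeneracy argument via invertibility of $A$ are both fine.

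One point you should not gloss over: what your computation actually yields is
\begin{eqnarray*}
U_i' & = & \sum_{j=1}^d a_{i,j}U_j + \frac{b_i}{2}I, \qquad V_i' \;=\; \sum_{j=1}^d a_{i,j}V_j + \frac{b_i}{2}I,
\end{eqnarray*}
since the constant $b_i$ enters only the preservation part, $a^{0\prime}(i)=\sum_j a_{i,j}a^0(j)+b_iI$, and is then halved in $U_i'=a^{-\prime}(i)+\frac{1}{2}a^{0\prime}(i)$. This is the correct formula: it is the only splitting compatible with $U_i'+V_i'=X_i'$ and with the duality $V_i'^{\,*}=U_i'$, and it agrees with the one-dimensional check $X'=X+b$, where $\alpha_n'=\alpha_n+b$ gives $U'=U+\frac{b}{2}I$. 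The proposition as printed has $+\,b_i$; taken literally (with ``similar formulas'' meaning $V_i'=\sum_j a_{i,j}V_j+b_i$) it would give $U_i'+V_i'=X_i'+b_i$, so the printed constant is off by a factor of $2$. You half-acknowledge this in your closing remark about ``dividing by two,'' but earlier you assert that the substitution ``yields the claimed identities,'' which it does not as stated; you should say explicitly that your proof establishes the identity with $b_i/2$, i.e.\ that the statement needs this correction. The discrepancy is harmless for everything the proposition is used for later, since the additive constants disappear inside the commutators $[U_i',X_j']$, but your write-up should prove the corrected identity rather than claim the printed one.
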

Let us assume now that $(X_1$, $X_2$, $\dots$, $X_d)$ is a non-degenerate $d$-dimensional
$1$-Meixner random vector. That means, there exist two finite sequences of real numbers:
$\{\alpha_{i, j, k}\}_{1 \leq i, j, k \leq d}$ and $\{\beta_{i, j}\}_{1 \leq i, j \leq d}$, such that,
for all $(i$, $j) \in \{1$, $2$, $\dots$, $d\}^2$, we have:
\begin{eqnarray*}
\left[U_i, X_j\right] & = & \sum_{k = 1}^d\alpha_{i, j, k}X_k + \beta_{i, j}I,
\end{eqnarray*}
where $I$ denotes the identity operator on the space $F$ of all polynomial random variables
in $X_1$, $X_2$, $\dots$, $X_d$.\\
It follows now from Proposition \ref{transmissibility_proposition}, that if we apply an invertible affine transformation $T : {\mathbb R}^d \to {\mathbb R}^d$,
\begin{eqnarray*}
Ty & = & Ay + b,
\end{eqnarray*}
where $A$ is a $d \times d$ invertible matrix, and $b$ a vector in ${\mathbb R}^d$, to the random vector
$X := (X_1$, $X_2$, $\dots$, $X_d)$, then the obtained random vector:
\begin{eqnarray*}
X' & := & AX + b
\end{eqnarray*}
is also a non-degenerate $d$-dimensional $1$-Meixner random vector.\\
Let us first center the random variables $X_1$, $X_2$, $\dots$, $X_d$, by subtracting for each of them its
expectation. That means, for all $i \in \{1$, $2$, $\dots$, $d\}$, we define:
\begin{eqnarray*}
X_i' & := & X_i - E\left[X_i\right].
\end{eqnarray*}
Thus, $(X_1'$, $X_2'$, $\dots$, $X_d')$ is a non-degenerate $d$-dimensional $1$-Meixner random vector,
in which each component is a random variable with expectation equal to $0$.\\
Now let us apply the Gram-Schmidt orthogonalization procedure to $X_1'$, $X_2'$, $\dots$, $X_d'$, with
respect to the inner product $\langle \cdot$, $\cdot \rangle$ of $L^2(\Omega$, ${\mathcal F}$, $P)$.
We obtain an orthonormal set of random variables $X_1''$, $X_2''$, $\dots$, $X_d''$, which are also the components of a centered non-degenerate $d$-dimensional $1$-Meixner random vector, since
the Gram-Schmidt orthogonalization procedure is obtained via an invertible linear map.\\
Thus, via an invertible affine map, we may assume that $(X_1$, $X_2$, $\dots$, $X_d)$ is a non-degenerate
$d$-dimensional $1$-Meixner random vector such that for all $i$, $j$, and $k$ in
$\{1$, $2$, $\dots$, $d\}$, we have:
\begin{eqnarray*}
E\left[X_iX_j\right] & = & \delta_{i, j}
\end{eqnarray*}
and
\begin{eqnarray*}
E\left[X_k\right] & = & 0,
\end{eqnarray*}
where $\delta_{i, j}$ denotes Kronecker's symbol.\\
Since, for all $1 \leq i \leq d$, we have $E[X_i] = 0$, if we define $\phi := 1$,
i.e. $\phi$ is the constant polynomial equal to $1$, then we have:
\begin{eqnarray*}
U_i\phi & = & a^-(i)\phi + \frac{1}{2}a^0(i)\phi \nonumber\\
& = & 0 + \frac{1}{2}E\left[X_i\right]\phi \nonumber\\
& = & 0,
\end{eqnarray*}
since $\phi \in G_0$, and $a^-(i): G_0 \to G_{-1} = \{0\}$, while:
\begin{eqnarray*}
a^0(i)\phi & = & P_0\left(X_i \cdot 1\right)\\
& = & \left\langle X_i\phi, \phi\right\rangle\phi\\
& = & E\left[X_i\right]\phi,
\end{eqnarray*}
where $P_n$ denotes the orthogonal projection of $L^2(\Omega$, ${\mathcal F}$, $P)$ onto
$G_n$, for all $n \geq 0$.\\
For all $1 \leq i$, $j \leq d$, using the polynomially duality
between $U_i$ and $V_i$, we have:
\begin{eqnarray*}
\delta_{i, j} & = & E\left[X_iX_j\right] \nonumber\\
& = & \left\langle X_iX_j\phi, \phi\right\rangle \nonumber\\
& = & \left\langle \left(U_i + V_i\right)X_j\phi, \phi \right\rangle \nonumber\\
& = & \left\langle U_iX_j\phi, \phi\right\rangle + \left\langle X_j\phi, U_i\phi\right\rangle
\nonumber\\
& = & \left\langle X_jU_i\phi, \phi\right\rangle +
\left\langle \left[U_i, X_j\right]\phi, \phi\right\rangle +  0  \nonumber\\
& = & 0 + \left\langle \left(\sum_{k = 1}^d\alpha_{i, j, k}X_k + \beta_{i, j}I\right)\phi,
\phi\right\rangle \nonumber\\
& = & \sum_{k = 1}^d\alpha_{i, j, k}E\left[X_k\right] + \beta_{i, j}\nonumber\\
& = & \beta_{i, j}. \nonumber
\end{eqnarray*}
Thus, for all $(i$, $j) \in \{1$, $2$, $\dots$, $d\}^2$, we have:
\begin{eqnarray*}
\beta_{i, j} & = & \delta_{i, j}.
\end{eqnarray*}
We establish now two sets of consistency conditions:
\begin{proposition} {\bf Linear Consistency Conditions} \
\label{P:LCC}
For all $(i$, $j$, $k) \in \{1$, $2$, $\dots$, $d\}^3$, we have:
\begin{enumerate}
\item $\alpha_{i, j, k} = \alpha_{j, i, k}$.

\item $\alpha_{i, j, k} = \alpha_{i, k, j}$.

\item for every permutation $\pi$ of the indices $\lp{i, j, k}$:
\begin{eqnarray*}
\alpha_{\pi(i), \pi(j), \pi(k)} & = & \alpha_{i, j, k}.
\end{eqnarray*}
\end{enumerate}
\end{proposition}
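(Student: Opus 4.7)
The plan is to prove (1) directly from the Commutative Probability axiom $[U_i, X_j] = [U_j, X_i]$; to prove (2) by computing the triple expectation $E[X_iX_jX_k]$ in closed form in terms of the $\alpha_{i,j,k}$ and exploiting its obvious symmetry in the three indices; and to obtain (3) as a purely combinatorial consequence of (1) and (2).

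For (1), I would substitute the defining relation $[U_i, X_j] = \sum_k \alpha_{i,j,k} X_k + \delta_{i,j} I$ (and its counterpart with $i,j$ exchanged) into the axiom $[U_i, X_j] = [U_j, X_i]$. Since $\delta_{i,j} = \delta_{j,i}$, this yields $\sum_k (\alpha_{i,j,k} - \alpha_{j,i,k}) X_k = 0$. Non-degeneracy of $(X_1, \ldots, X_d)$ gives linear independence of $I, X_1, \ldots, X_d$, so $\alpha_{i,j,k} = \alpha_{j,i,k}$.

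For (2), the core computation is the identity $E[X_iX_jX_k] = \alpha_{i,j,k} + \alpha_{i,k,j}$. Applying the defining relation to $\phi = 1$ and using $U_i\phi = 0$ (which was established in the excerpt because $E[X_i]=0$), one has $U_iX_j\phi = [U_i, X_j]\phi = \sum_l \alpha_{i,j,l}X_l\phi + \delta_{i,j}\phi$. Decomposing $X_i = U_i + V_i$ and invoking the polynomial duality $V_i^* = U_i$,
\begin{eqnarray*}
E[X_iX_jX_k] &=& \langle X_iX_j\phi, X_k\phi\rangle \\
&=& \langle U_iX_j\phi, X_k\phi\rangle + \langle X_j\phi, U_iX_k\phi\rangle \\
&=& \alpha_{i,j,k} + \alpha_{i,k,j},
\end{eqnarray*}
where each inner product is evaluated using $E[X_lX_m] = \delta_{l,m}$ and $E[X_l] = 0$. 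Since $X_iX_jX_k$ is invariant under every permutation of $(i,j,k)$, applying the transposition that swaps $i$ and $k$ to this identity and then cancelling one pair of terms using (1) yields $\alpha_{i,j,k} = \alpha_{k,j,i}$. Composing this ``swap first and third'' symmetry with (1) (``swap first and second'') then gives $\alpha_{i,j,k} = \alpha_{j,i,k} = \alpha_{k,i,j} = \alpha_{i,k,j}$, which is (2).

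Finally, (3) is immediate: the transpositions of positions $(1\,2)$ and $(2\,3)$ in the index triple are exactly the symmetries provided by (1) and (2), and these two adjacent transpositions generate the full symmetric group $S_3$ on the three positions. The only step requiring real care is the inner-product computation for $E[X_iX_jX_k]$; once the identities $U_i\phi = 0$ and $E[X_l] = 0$ are used at the correct moments and the duality $V_i^* = U_i$ is applied in the right direction, the rest is pure bookkeeping.
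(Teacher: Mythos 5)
Your proposal is correct and follows essentially the same route as the paper: the axiom $[U_i,X_j]=[U_j,X_i]$ together with the linear independence of $I, X_1,\dots,X_d$ for (1), a two-fold evaluation of the third mixed moment giving $E[X_iX_jX_k]=\alpha_{i,j,k}+\alpha_{i,k,j}$ and then a cancellation via (1) for (2), and the fact that transpositions generate $S_3$ for (3). The only (harmless) difference is bookkeeping: you move $X_k$ into the second slot of the inner product and apply the duality $V_i^*=U_i$ directly, whereas the paper keeps everything paired against $\phi$ and commutes $U_i$ past $X_jX_k$ with the Leibniz rule, arriving at the same identity (\ref{ijk1}).
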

\begin{proof}
1. For all $1 \leq i$, $j \leq d$, due to the axiom of Commutative Probability:
\begin{eqnarray*}
\left[U_i, X_j\right] & = & \left[U_j, X_i\right],
\end{eqnarray*}
we have:
\begin{eqnarray*}
\sum_{k = 1}^d\alpha_{i, j, k}X_k + \beta_{i, j}I & = &
\sum_{k = 1}^d\alpha_{j, i, k}X_k + \beta_{j, i}I.
\end{eqnarray*}
Since $X_1$, $X_2$, $\dots$, $X_d$, and $I$ are linearly independent, we conclude that
for all $1 \leq k \leq d$, we have:
\begin{eqnarray*}
\alpha_{i, j, k} & = & \alpha_{j, i, k}.
\end{eqnarray*}
2. For all $1 \leq i$, $j$, $k \leq d$, we can compute the joint moment
$E[X_iX_jX_k]$ in two different ways.\\
Indeed, we have:
\begin{eqnarray*}
E\left[X_iX_jX_k\right] & = & \left\langle \left(U_i + V_i\right)X_jX_k\phi, \phi\right\rangle\\
& = & \left\langle U_iX_jX_k\phi, \phi\right\rangle + \left\langle X_jX_k\phi, U_i\phi\right\rangle\\
& = & \left\langle X_jX_kU_i\phi, \phi\right\rangle +
\left\langle \left[U_i, X_jX_k\right]\phi, \phi\right\rangle + 0.
\end{eqnarray*}
Using now Leibniz commutator rule:
\begin{eqnarray*}
\left[U_i, X_jX_k\right] & = & \left[U_i, X_j\right]X_k  + X_j\left[U_i, X_k\right],
\end{eqnarray*}
we obtain:
\begin{eqnarray}
& \ & E\left[X_iX_jX_k\right] \nonumber\\
& = &
\left\langle \left(\sum_{l = 1}^d \alpha_{i, j, l}X_l + \delta_{i, j}I\right)X_k\phi, \phi\right\rangle
+ \left\langle X_j\left(\sum_{l = 1}^d \alpha_{i, k, l}X_l + \delta_{i, k}I\right)\phi, \phi\right\rangle
\nonumber\\
& = & \sum_{l = 1}^d\alpha_{i, j, l}E\left[X_lX_k\right] +
\sum_{l = 1}^d\alpha_{i, k, l}E\left[X_jX_l\right] \nonumber\\
& = & \sum_{l = 1}^d\alpha_{i, j, l}\delta_{l, k} + \sum_{l = 1}^d\alpha_{i, k, l}\delta_{j, l} \nonumber\\
& = & \alpha_{i, j, k} + \alpha_{i, k, j}. \label{ijk1}
\end{eqnarray}
Permuting now the factors $X_i$, $X_j$, and $X_k$ inside the expectation, a similar computation shows that:
\begin{eqnarray}
E\left[X_iX_jX_k\right] & = & E\left[X_kX_iX_j\right]\nonumber\\
& = & \alpha_{k, i, j} + \alpha_{k, j, i}. \label{ijk2}
\end{eqnarray}
Thus, from (\ref{ijk1}) and (\ref{ijk2}), it follows that:
\begin{eqnarray*}
\alpha_{i, j, k} + \alpha_{i, k, j} & = & \alpha_{k, i, j} + \alpha_{k, j, i},
\end{eqnarray*}
which combined with the fact, from part 1., that:
\begin{eqnarray*}
\alpha_{i, k, j} & = & \alpha_{k, i, j},
\end{eqnarray*}
implies:
\begin{eqnarray*}
\alpha_{i, j, k} & = & \alpha_{k, j, i}.
\end{eqnarray*}
3. Let $i$, $j$, and $k$ be fixed in $\{1$, $2$, $\dots$, $d\}$. Parts 1. and 2, imply that
for all transpositions $\tau$ of $\lp{i, j, k}$, we have:
\begin{eqnarray*}
\alpha_{\tau(i), \tau(j), \tau(k)} & = & \alpha_{i, j, k}.
\end{eqnarray*}
Since every permutation can be written as a product of transpositions, we conclude that
for all permutations $\pi$ of $\lp{i, j, k}$, we have:
\begin{eqnarray*}
\alpha_{\pi(i), \pi(j), \pi(k)} & = & \alpha_{i, j, k}.
\end{eqnarray*}
\end{proof}
\noindent As a consequence of formula (\ref{ijk1}) we obtain:
\begin{corollary}
For all $i$, $j$, and $k$ in $\{1$, $2$, $\dots$, $d\}$, we have:
\begin{eqnarray}
E\left[X_iX_jX_k\right] & = & 2\alpha_{i, j, k}. \label{third_mixed_moment}
\end{eqnarray}
\end{corollary}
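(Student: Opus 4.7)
The claim is immediate from what has already been established, so my plan is simply to combine two earlier results. The first ingredient is formula (\ref{ijk1}) from the proof of Proposition \ref{P:LCC}, which gives the identity
\begin{eqnarray*}
E\left[X_iX_jX_k\right] & = & \alpha_{i, j, k} + \alpha_{i, k, j}.
\end{eqnarray*}
This identity was obtained by expanding $E[X_iX_jX_k] = \langle (U_i + V_i)X_jX_k\phi, \phi\rangle$, moving $V_i$ to the right via polynomial duality, applying Leibniz's rule to $[U_i, X_jX_k]$, and using $\beta_{i,\ell} = \delta_{i,\ell}$ together with $E[X_pX_q] = \delta_{p,q}$ and $U_i\phi = 0$.

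The second ingredient is part 2 of Proposition \ref{P:LCC}, which asserts the symmetry $\alpha_{i, j, k} = \alpha_{i, k, j}$ (this is a direct consequence of the axioms of Commutative Probability combined with the computation of $E[X_iX_jX_k]$ in two different orderings). Substituting this symmetry into (\ref{ijk1}) collapses the two terms into one, yielding $E[X_iX_jX_k] = 2\alpha_{i,j,k}$, as required.

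There is essentially no obstacle here: the corollary is just the observation that the two summands in (\ref{ijk1}) are equal by the linear consistency conditions already proved. So my ``proof'' is a two-line citation of (\ref{ijk1}) and Proposition \ref{P:LCC}(2), with the arithmetic $\alpha_{i,j,k} + \alpha_{i,k,j} = 2\alpha_{i,j,k}$.
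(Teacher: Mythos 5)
Your proposal is correct and coincides with the paper's own reasoning: the corollary is stated there precisely as a consequence of formula (\ref{ijk1}), combined with the symmetry $\alpha_{i,j,k}=\alpha_{i,k,j}$ from Proposition~\ref{P:LCC}. Nothing further is needed.
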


\section{Moments estimates and Laplace transform}
\label{S:meLt}

Let $(X_1$, $X_2$, $\dots$, $X_d)$ be a $d$-dimensional random vector.\\
Let $\vct{i} = (i_1$, $i_2$, $\dots$, $i_d) \in [{\mathbb N} \cup \{0\}]^d$.
We introduce the following notations:
\begin{itemize}

\item $|\vct{i}| : = i_1 + i_2 + \cdots + i_d$ and call $|\vct{i}|$ the {\em length} of $\vct{i}$.

\item $X^{\vct{i}} : = X_1^{i_1}X_2^{i_2} \cdots X_d^{i_d}$.

\end{itemize}

We have the following lemma.

\begin{lemma}\label{moment_estimate_lemma}

Let $(X_1$, $X_2$, $\dots$, $X_d)$ be a centered $d$-dimensional 1-Meixner random vector.
Let $\{\alpha_{i, j, k}\}_{1 \leq i, j, k \leq d}$ and $\{\beta_{i, j}\}_{1 \leq i, j \leq d}$ be the coefficients that
are used to express the commutators of the joint semi-annihilation operators and $X_1$, $X_2$, $\dots$, $X_d$ as
linear combinations
of $X_1$, $X_2$, $\dots$, $X_d$, and the identity operator $I$. Then for all
$\vct{i} = (i_1$, $i_2$, $\dots$, $i_d) \in [{\mathbb N} \cup \{0\}]^d$, we have:
\begin{eqnarray}
\left|E\left[X^{\vct{i}}\right]\right| & \leq & K^{|\vct{i}|} \cdot |\vct{i}|!, \label{est_1}
\end{eqnarray}
where $K := \max\{dA + B$, $1\}$, for $A := \max\{|\alpha_{i, j, k}| \mid 1 \leq i$, $j$, $k \leq d\}$,
$B := \max\{|\beta(i, j)| \mid 1 \leq i$, $j \leq d\}$, and
\begin{eqnarray}
E\left[\left|X^{\vct{i}}\right|\right] & \leq & \left(2K\right)^{|\vct{i}|} \cdot |\vct{i}|!. \label{est_2}
\end{eqnarray}

\end{lemma}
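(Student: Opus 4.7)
The plan is to prove \eqref{est_1} by induction on $n := |\vct{i}|$ and then to deduce \eqref{est_2} from it by Cauchy--Schwarz. The base cases $n = 0$ (where $E[1] = 1 = K^{0} \cdot 0!$) and $n = 1$ (where $E[X_\ell] = 0$ since the $X_j$ are centered) are immediate, and any bound exceeding $0$ or $1$ is trivial in those cases because $K \geq 1$.

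For the inductive step, I fix $n \geq 2$ and choose any coordinate $\ell$ with $i_\ell \geq 1$, so that $X^{\vct{i}} = X_\ell \cdot X^{\vct{i}-e_\ell}$, where $e_\ell$ is the $\ell$-th standard basis vector. Writing $X_\ell = U_\ell + V_\ell$, using the polynomial duality $V_\ell^{*} = U_\ell$, and using the identity $U_\ell \phi = 0$ for $\phi = 1$ (which was derived in Section~4 directly from $E[X_\ell] = 0$), the expansion of $\langle X_\ell X^{\vct{i}-e_\ell} \phi, \phi \rangle$ collapses to
\[
E\bigl[X^{\vct{i}}\bigr] \;=\; \bigl\langle [U_\ell, X^{\vct{i}-e_\ell}]\,\phi,\; \phi \bigr\rangle.
\]
Applying the Leibniz rule to the commutator, substituting $[U_\ell, X_m] = \sum_{k=1}^d \alpha_{\ell,m,k} X_k + \beta_{\ell,m} I$, and using that each such first-order expression is itself a polynomial in the $X_j$ and therefore commutes with every multiplication operator $X_j$, I group the Leibniz sum by the hit index $m$ to obtain the recursion
\[
E[X^{\vct{i}}] \;=\; \sum_{m=1}^d (i_m - \delta_{\ell,m}) \Biggl[\, \sum_{k=1}^d \alpha_{\ell,m,k}\, E\bigl[X^{\vct{i}-e_\ell-e_m+e_k}\bigr] \;+\; \beta_{\ell,m}\, E\bigl[X^{\vct{i}-e_\ell-e_m}\bigr] \Biggr],
\]
in which every moment on the right has length $n-1$ or $n-2$.

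Using the inductive hypothesis together with the bounds $|\alpha_{\ell,m,k}| \leq A$ and $|\beta_{\ell,m}| \leq B$, the identity $\sum_m (i_m - \delta_{\ell,m}) = n - 1$, and the elementary simplifications $(n-1)(n-1)! \leq n!$ and $(n-1)(n-2)! = (n-1)!$, I reach
\[
\bigl|E[X^{\vct{i}}]\bigr| \;\leq\; dA\, K^{n-1}\, n! \;+\; B\, K^{n-2}\, (n-1)!.
\]
To close the induction this must be at most $K^n n!$; after dividing through by $K^{n-2}(n-1)!$ this reduces to $K n (K - dA) \geq B$. The choice $K = \max\{dA + B,\,1\}$ is designed precisely for this: if $K = dA + B \geq 1$ the inequality becomes $K n B \geq B$, which holds since $K n \geq 1$; if instead $K = 1 > dA + B$ then $n(1 - dA) \geq 1 - dA > B$ for every $n \geq 1$.

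Finally, \eqref{est_2} follows from \eqref{est_1} via Cauchy--Schwarz: since $|X^{\vct{i}}|^2 = X^{2\vct{i}}$, I have $E[|X^{\vct{i}}|] \leq \sqrt{E[X^{2\vct{i}}]}$, and applying \eqref{est_1} to $2\vct{i}$ (of length $2n$) together with the classical bound $(2n)! \leq 4^n (n!)^2$ yields $E[|X^{\vct{i}}|] \leq K^n \cdot 2^n \cdot n! = (2K)^n n!$. The main technical obstacle in the whole proof is the recursion-closure step: the choice of $K$ and the bookkeeping $(n-1)(n-1)! \leq n!$ are what make the $A$-term and the $B$-term fit simultaneously inside the $K^n n!$ budget; everything else (including the use of $U_\ell \phi = 0$ and the Leibniz grouping) is mechanical.
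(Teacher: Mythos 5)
Your proof is correct and follows essentially the same route as the paper: the identity $E[X^{\vct{i}}] = \langle [U_\ell, X^{\vct{i}-e_\ell}]\,1, 1\rangle$ via $U_\ell 1 = 0$ and duality, the Leibniz expansion with the commutator substituted as a multiplication operator, strong induction on $|\vct{i}|$ closed by the choice $K = \max\{dA+B, 1\}$, and then Jensen/Cauchy--Schwarz with $(2n)! \le 4^n (n!)^2$ for the second estimate. Your only deviation is cosmetic bookkeeping in the closure step (a case split on $K$ rather than the paper's direct use of $dA + B \le K$), which changes nothing essential.
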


\begin{proof}

We will prove first (\ref{est_1}) by induction on $l := |\vct{i}|$.\\
For $l := 0$, the inequality is obvious since:
\begin{eqnarray*}
\left|E\left[X^{\vct{0}}\right]\right| & = & 1\nonumber\\
& \leq & K.
\end{eqnarray*}
Let us assume that inequality (\ref{est_2}) is true for all multi-indexes $\vct{i} \in [{\mathbb N} \cup \{0\}]^d$ of length
$|\vct{i}| \leq l$, and prove that it remains true for all multi-indexes of length $l + 1$.\\
Let $\vct{i} = (i_1$, $i_2$, $\dots$, $i_d) \in [{\mathbb N} \cup \{0\}]^d$ be a multi-index of length
$i_1 + i_2 + \cdots + i_d = l + 1$. Since $l + 1 \geq 1$, there exists $w \in \{1$, $2$, $\dots$, $d\}$, such that
$i_w \geq 1$. Thus, the factor $X_w$ appears for sure in the product $X^{\vct{i}} = X_1^{i_1}X_2^{i_2} \cdots X_d^{i_d}$.\\
We have:
\begin{eqnarray*}
E\left[X^{\vct{i}}\right] & = & \langle X^{\vct{i}}1, 1 \rangle \nonumber\\
& = & \langle X_wX_1^{i_1} \cdots X_w^{i_w - 1} \cdots X_d^{i_d}1, 1 \rangle \nonumber\\
& = & \langle \left(U_w + V_w\right)X_1^{i_1} \cdots X_w^{i_w - 1} \cdots X_d^{i_d}1, 1 \rangle \nonumber\\
& = & \langle U_wX_1^{i_1} \cdots X_w^{i_w - 1} \cdots X_d^{i_d}1, 1 \rangle +
\langle X_1^{i_1} \cdots X_w^{i_w - 1} \cdots X_d^{i_d}1, U_w1 \rangle \nonumber\\
& = & \langle U_wX_1^{i_1} \cdots X_w^{i_w - 1} \cdots X_d^{i_d}1, 1 \rangle,
\end{eqnarray*}
since $U_w1 = 0$ due to the fact that $E[X_w] = 0$ (since $X_w$ is assumed to be centered).\\
Let us define now the vector $\vct{j}$, of length $|\vct{j}| = |\vct{i}| - 1$, by $\vct{j} := (j_1$, $j_2$, $\dots$, $j_d)$, where:
\begin{eqnarray*}
j_r & := & \left\{
\begin{array}{ccc}
i_r & {\rm if} & r \neq w\\
i_w - 1 & {\rm if} & r = w.
\end{array}
\right.
\end{eqnarray*}
We commute $U_w$ with $X^{\vct{j}}$ using Leibniz commutator rule, and obtain:
\begin{eqnarray*}
E\left[X^{\vct{i}}\right] & = & \langle U_wX_1^{j_1} \cdots X_w^{j_w} \cdots X_d^{j_d}1, 1 \rangle \nonumber\\
& = & \langle X_1^{j_1} \cdots X_w^{j_w} \cdots X_d^{j_d}U_w1, 1 \rangle +
\left\langle \left[U_w, X_1^{j_1} \cdots X_w^{j_w} \cdots X_d^{j_d}\right]1, 1 \right\rangle \nonumber\\
& = & \sum_{p = 1}^d\sum_{q = 1}^{j_p}
\left\langle X_1^{j_1} \cdots X_{p - 1}^{j_{p - 1}}X_p^{q - 1}\left[U_w, X_p\right]X_p^{j_p - q}X_{p + 1}^{j_{p + 1}}
\cdots X_d^{j_d}1, 1 \right\rangle
\end{eqnarray*}
since $U_w1 = 0$.
Because:
\begin{eqnarray*}
\left[U_w, X_p\right] & = & \sum_{r = 1}^d\alpha_{w, p, r}X_r + \beta_{w, p}I,
\end{eqnarray*}
we obtain:
\begin{eqnarray}
E\left[X^{\vct{i}}\right]
& = & \sum_{p = 1}^d\sum_{q = 1}^{j_p}
\left\langle X_1^{j_1} \cdots X_p^{q - 1}\left(\sum_{r = 1}^d\alpha_{w, p, r}X_r + \beta_{w, p}I\right)
X_p^{j_p - q}\cdots X_d^{j_d}1, 1 \right\rangle \nonumber\\
& = & \sum_{p = 1}^d\sum_{q = 1}^{j_p}\sum_{r = 1}^d\alpha_{w, p, r}E\left[X_1^{j_1} \cdots X_p^{q - 1}X_rX_p^{j_p - q} \cdots X_d^{j_d}\right]
\nonumber\\
& \ & + \sum_{p = 1}^d\sum_{q = 1}^{j_p}\beta_{w, p}E\left[X_1^{j_1} \cdots X_p^{q - 1}X_p^{j_p - q} \cdots X_d^{j_d}\right]
\nonumber\\
& = & \sum_{p = 1}^d\sum_{r = 1}^dj_p\alpha_{w, p, r}E\left[X_rX_1^{j_1} \cdots X_p^{j_p - 1} \cdots X_d^{j_d}\right] \nonumber\\
& \ & + \sum_{p = 1}^dj_p\beta_{w, p}E\left[X_1^{j_1} \cdots X_p^{j_p - 1} \cdots X_d^{j_d}\right]. \label{temp_1}
\end{eqnarray}
Since $X_rX_1^{j_1} \cdots X_p^{j_p - 1} \cdots X_d^{j_d} = X^{\vct{u}}$, for some vector $\vct{u}$, with $|\vct{u}| = l$,
and\\
$X_1^{j_1} \cdots X_p^{j_p - 1} \cdots X_d^{j_d} = X^{\vct{v}}$, for some vector $\vct{v}$, of length $|\vct{v}| = l - 1$,
using the triangle inequality, the induction hypothesis, and the inequalities $(l - 1)! \leq l!$ and $K^{l - 1} \le K^l$,
we conclude from (\ref{temp_1}) that:
\begin{eqnarray*}
E\left[X^{\vct{i}}\right]
& \leq & \sum_{p = 1}^d\sum_{r = 1}^dj_p \cdot A \cdot K^l \cdot l! + \sum_{p = 1}^dj_p \cdot B \cdot K^{l - 1} \cdot (l - 1)! \nonumber\\
& \leq & dAK^l \cdot l!\left(\sum_{p = 1}^dj_p\right) + BK^l \cdot l!\left(\sum_{p = 1}^dj_p\right) \nonumber\\
& \le & dAK^l \cdot l! \cdot (l + 1) + BK^l \cdot l! \cdot (l + 1) \nonumber\\
& = & K^l(dA + B) \cdot (l + 1)! \nonumber\\
& \leq & K^{l + 1} \cdot (l + 1)!.
\end{eqnarray*}
The proof of part a) is now complete.\\
To prove part b), we use Jensen inequality for the convex function $\varphi(t) = t^2$, and the inequality from part a).
Thus, for all $\vct{i} \in [{\mathbb N} \cup \{0\}]^d$, we have:
\begin{eqnarray}
\left(E\left[\left|X^{\vct{i}}\right|\right]\right)^2 & \leq & E\left[\left|X^{\vct{i}}\right|^2\right] \nonumber\\
& = &  E\left[X^{2\vct{i}}\right] \nonumber\\
& \leq & K^{|2\vct{i}|} \cdot |2\vct{i}|! \nonumber\\
& \leq & K^{2|\vct{i}|} \cdot 2^{2|\vct{i}|}\left(|\vct{i}|!\right)^2, \label{temp_2}
\end{eqnarray}
due to the fact that for all $l \in {\mathbb N} \cup \{0\}$, we have:
\begin{eqnarray*}
\frac{(2l)!}{(l!)^2} & = & {2l \choose l} \nonumber\\
& \leq & \sum_{j = 0}^{2l}{2l \choose j} \nonumber\\
& = & 2^{2l}.
\end{eqnarray*}
Taking the square root in both sides of (\ref{temp_2}) we obtain inequality (\ref{est_2}).
\end{proof}
Because of the above estimates, we have the following:
\begin{lemma}
If $X = (X_1$, $X_2$, $\dots$, $X_d)$ is a $d$-dimensional $1$-Meixner random vector, then the Laplace transform of $X$,
\begin{eqnarray*}
\varphi\left(\vct{t}\right) & = & E\left[\exp\left(\vct{t} \cdot X\right)\right]
\end{eqnarray*}
is well defined and twice differentiable, with continuous second order partial derivatives on a neighborhood
$V$ of $\vct{0} = (0$, $0$, $\dots$, $0) \in {\mathbb R}^d$.
\end{lemma}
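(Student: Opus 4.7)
The plan is to show that the series expansion of $\exp(\vct{t} \cdot X)$ converges in $L^1$ uniformly on a small neighborhood of $\vct{0}$, which will give simultaneously that $\varphi$ is well defined there and that it is in fact real-analytic, hence $C^\infty$ (so certainly $C^2$). The key input is the moment bound $E[|X^{\vct{i}}|] \leq (2K)^{|\vct{i}|}\,|\vct{i}|!$ from Lemma \ref{moment_estimate_lemma}.

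First I would write, formally,
\begin{eqnarray*}
\exp\!\lp{\vct{t}\cdot X} & = & \sum_{n=0}^\infty \frac{(\vct{t}\cdot X)^n}{n!}
\;=\; \sum_{n=0}^\infty \frac{1}{n!} \sum_{|\vct{i}|=n} \binom{n}{\vct{i}} \vct{t}^{\vct{i}}\, X^{\vct{i}},
\end{eqnarray*}
using the multinomial theorem, and then estimate the $L^1(\Omega,P)$ norm of the partial sums. Applying the triangle inequality and (\ref{est_2}) gives
\begin{eqnarray*}
\sum_{n=0}^\infty \frac{1}{n!}\sum_{|\vct{i}|=n}\binom{n}{\vct{i}} \av{\vct{t}^{\vct{i}}}\, E\ls{\av{X^{\vct{i}}}}
& \leq & \sum_{n=0}^\infty \frac{(2K)^n n!}{n!} \sum_{|\vct{i}|=n}\binom{n}{\vct{i}}\av{t_1}^{i_1}\cdots \av{t_d}^{i_d} \\
& = & \sum_{n=0}^\infty (2K)^n\lp{\av{t_1}+\cdots+\av{t_d}}^n,
\end{eqnarray*}
which is a convergent geometric series as soon as $\vct{t}$ lies in the open set
\begin{eqnarray*}
V & := & \lc{\vct{t}\in\reo^d : \av{t_1}+\cdots+\av{t_d} < \tfrac{1}{4K}},
\end{eqnarray*}
where I allow extra slack so the bound below on second derivatives will also go through. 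By the Fubini--Tonelli theorem the expectation and the summation may be interchanged, so $\varphi(\vct{t}) = E[\exp(\vct{t}\cdot X)]$ is finite on $V$ and is represented there by the absolutely convergent power series
\begin{eqnarray*}
\vp\lp{\vct{t}} & = & \sum_{\vct{i}} \frac{E\ls{X^{\vct{i}}}}{\vct{i}!}\,\vct{t}^{\vct{i}}.
\end{eqnarray*}

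For the second derivatives, I would repeat the same argument after differentiating the series term by term twice, which introduces factors $i_p i_q$ (or $i_p(i_p-1)$ on a diagonal). Using $i_p i_q \leq n^2$ on the degree-$n$ layer, the formal series for $\partial^2\vp/\partial t_p\partial t_q$ is dominated on $V$ by
\begin{eqnarray*}
\sum_{n=0}^\infty n^2 (2K)^n\lp{\av{t_1}+\cdots+\av{t_d}}^n,
\end{eqnarray*}
which still converges on $V$ (and even locally uniformly there). Standard Weierstrass-type theorems on term-by-term differentiation of power series then justify differentiation under the expectation and yield continuous second order partial derivatives on $V$. The main (only) obstacle is the interchange of expectation and summation, and that is settled cleanly by the factorial bound (\ref{est_2}), which is tight enough to kill the $n!$ in the denominator and leave a convergent geometric tail. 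No step beyond this requires further probabilistic input.
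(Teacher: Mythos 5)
Your argument is correct and is essentially the paper's own proof: both rest on the factorial moment bound (\ref{est_2}) from Lemma \ref{moment_estimate_lemma}, expand the exponential series, verify absolute summability of the expectations (your $\ell^1$-ball with the multinomial theorem versus the paper's $\ell^\infty$-ball with maps $\sigma$ is only a bookkeeping difference), and interchange expectation and summation by Tonelli/monotone convergence, then differentiate term by term. Your handling of the second derivatives is in fact slightly more explicit than the paper's one-line assertion (modulo the trivial point that the dominating series should carry the exponent $n-2$ rather than $n$, which changes nothing about convergence).
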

\begin{proof}
Let $V := \{\vct{t} \in {\mathbb R}^d \mid \parallel \vct{t}\parallel_{\infty} < R\}$,
where for all $\vct{t} = (t_1$, $t_2$, $\dots$, $t_d) \in {\mathbb R}^d$, we define
$\parallel \vct{t} \parallel_{\infty} := \max\{|t_1|$, $|t_2|$, $\dots$, $|t_d|\}$, and
$R := 1/(2Kd)$, where $K$ is the constant from the previous lemma.\\
For all $n \in {\mathbb N}$, we define
${\mathcal P}_n := \{\sigma : \{1$, $2$, $\dots$, $n\} \to \{1$, $2$, $\dots$, $d\}\}$.
Then for all $\vct{t} \in V$, we have:
\begin{eqnarray*}
\sum_{n = 0}^{\infty}\frac{E[|\vct{t} \cdot X|^n]}{n!} & \leq & 1 +
\sum_{n = 1}^{\infty}\sum_{\sigma \in {\mathcal P}_n}\frac{E\left[|t_{\sigma(1)}||X_{\sigma(1)}||t_{\sigma(2)}||X_{\sigma(2}|
\cdots |t_{\sigma(n)}||X_{\sigma(n)}|\right]}{n!} \nonumber\\
& \leq & 1 + \sum_{n = 1}^{\infty}\sum_{\sigma \in {\mathcal P}_n}
\parallel \vct{t} \parallel_{\infty}^n\frac{E\left[|X_{\sigma(1)}||X_{\sigma(2)}| \cdots |X_{\sigma(n)}|\right]}{n!} \nonumber\\
& \leq & 1 + \sum_{n = 1}^{\infty}\sum_{\sigma \in {\mathcal P}_n}
\parallel t \parallel_{\infty}^n\frac{2^nK^nn!}{n!} \nonumber\\
& = & \sum_{n = 0}^{\infty}2^nK^nd^n\parallel t \parallel_{\infty}^n \nonumber\\
& = & \sum_{n = 0}^{\infty}\left(\frac{\parallel t \parallel_{\infty}}{R}\right)^n \nonumber\\
& = & \frac{R}{R - \parallel t \parallel_{\infty}} \nonumber\\
& < & \infty.
\end{eqnarray*}
Monotone convergence theorem implies now that:
\begin{eqnarray*}
E\left[\exp\left(|\vct{t} \cdot X|\right)\right] & = & E\left[\sum_{n = 0}^{\infty}\frac{|\vct{t} \cdot X|^n}{n!}\right] \nonumber\\
& = & \sum_{n = 0}^{\infty}\frac{E[|\vct{t} \cdot X|^n]}{n!} \nonumber\\
& < & \infty.
\end{eqnarray*}
Thus, we have:
\begin{eqnarray*}
\varphi(t) & = & E\left[\exp\left(\vct{t} \cdot X\right)\right] \nonumber\\
& \leq & E\left[\exp\left(|\vct{t} \cdot X|\right)\right] \nonumber\\
& < & \infty.
\end{eqnarray*}
Therefore, the Laplace transform of $X$, $\varphi$, is well defined on $V$.\\
In the same way, we can see that $\varphi$ is infinitely differentiable on $V$, and each derivative can be performed
term by term using the exponential series.
\end{proof}
We find now a system of partial differential equations for the Laplace transform $\varphi$ of $X$.
We have the following lemma.
\begin{lemma}
\label{L:syst}
Let $X = (X_1$, $X_2$, $\dots$, $X_d)$ be a non-degenerate $d$-dimensional $1$-Meixner random vector.
Let $\{\alpha_{i, j, k}\}_{1 \leq i, j, k \leq d}$ and $\{\beta_{i, j}\}_{1 \leq i, j \leq d}$ be the
coefficients that are used to express
the commutators of their semi-annihilation operators and the components of $X$, as linear combinations of $X_1$, $X_2$, $\dots$, $X_d$.
We also assume that $X_1$, $X_2$, $\dots$, $X_d$ form an orthonormal set of centered random variables in $L^2$ (which we saw before that it is possible to be achieved via a translation and an invertible linear transformation). Thus, for all $1 \leq i$, $j \leq d$, $\beta_{i,j} = \delta_{i, j}$
(the Kronecker symbol). Then the Laplace transform of $X$, which is defined as:
\begin{eqnarray*}
\varphi(t_1, t_2, \cdots, t_d) & := & E\left[\exp\left(t_1X_1 + t_2X_2 + \cdots + t_dX_d\right)\right],
\end{eqnarray*}
for all $\vct{t} = (t_1$, $t_2$, $\dots$, $t_d)$ in a neighborhood $V$ of $\vct{0} = (0$, $0$, $\dots$, $0)$, satisfies the following system of differential equations:
\begin{eqnarray*}
\frac{\partial \varphi}{\partial t_1} & = & \sum_{1 \leq j, k \leq d}\alpha_{1, j, k}t_j\frac{\partial \varphi}{\partial t_k}
+ t_1\varphi\\
\frac{\partial \varphi}{\partial t_2} & = & \sum_{1 \leq j, k \leq d}\alpha_{2, j, k}t_j\frac{\partial \varphi}{\partial t_k}
+ t_2\varphi\\
\vdots & \vdots & \vdots \\
\frac{\partial \varphi}{\partial t_d} & = & \sum_{1 \leq j, k \leq d}\alpha_{d, j, k}t_j\frac{\partial \varphi}{\partial t_1}
+ t_d\varphi.
\end{eqnarray*}
\end{lemma}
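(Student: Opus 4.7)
The plan is to exploit the semi-quantum decomposition $X_i = U_i + V_i$ together with the polynomial duality $V_i^{*} = U_i$ and the vanishing $U_i 1 = 0$ (which, as shown earlier, follows from the centering assumption $E[X_i] = 0$). Differentiating under the expectation, which is justified by the moment bound of Lemma \ref{moment_estimate_lemma}, gives
\begin{eqnarray*}
\frac{\partial \varphi}{\partial t_i}(\vct{t}) & = & E\ls{X_i e^{\vct{t} \cdot X}} \;=\; \inn{(U_i + V_i) e^{\vct{t} \cdot X}}{1} \;=\; \inn{U_i e^{\vct{t} \cdot X}}{1} + \inn{e^{\vct{t} \cdot X}}{U_i 1} \;=\; \inn{U_i e^{\vct{t} \cdot X}}{1},
\end{eqnarray*}
so the problem reduces to evaluating $\inn{U_i e^{\vct{t} \cdot X}}{1}$ by pulling $U_i$ through the exponential to the constant polynomial $1$, where it annihilates.

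The key algebraic step is to compute the commutator $[U_i, e^{\vct{t} \cdot X}]$ on polynomials. By the hypothesis,
\begin{eqnarray*}
[U_i, \vct{t} \cdot X] & = & \sum_{j = 1}^d t_j [U_i, X_j] \;=\; \sum_{1 \leq j,k \leq d}\alpha_{i, j, k} t_j X_k + t_i I,
\end{eqnarray*}
which is a multiplication operator. Since multiplication operators commute with one another, $[U_i, \vct{t} \cdot X]$ commutes with $\vct{t} \cdot X$, so an induction using the Leibniz commutator rule yields
\begin{eqnarray*}
\left[U_i, (\vct{t} \cdot X)^n\right] & = & n\,[U_i, \vct{t} \cdot X]\,(\vct{t} \cdot X)^{n - 1}
\end{eqnarray*}
for all $n \geq 1$. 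Summing the exponential series term by term gives, on the polynomial space $F$, the identity $[U_i, e^{\vct{t} \cdot X}] = [U_i, \vct{t} \cdot X]\,e^{\vct{t} \cdot X}$.

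Combining these two ingredients, and again using $U_i 1 = 0$, we obtain
\begin{eqnarray*}
\frac{\partial \varphi}{\partial t_i} & = & \inn{e^{\vct{t} \cdot X} U_i 1}{1} + \inn{[U_i, \vct{t} \cdot X] e^{\vct{t} \cdot X}}{1} \;=\; \sum_{1 \leq j,k \leq d}\alpha_{i, j, k} t_j\, E\ls{X_k e^{\vct{t} \cdot X}} + t_i\, E\ls{e^{\vct{t} \cdot X}},
\end{eqnarray*}
which, after recognizing $E[X_k e^{\vct{t} \cdot X}] = \partial \varphi/\partial t_k$ and $E[e^{\vct{t} \cdot X}] = \varphi$, is exactly the claimed system.

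The main technical obstacle is legitimizing the operator manipulations with the analytic object $e^{\vct{t} \cdot X}$, since the semi-quantum operators and the commutator formula were defined on the polynomial space $F$, while $e^{\vct{t} \cdot X}$ is not polynomial. I would handle this by performing the computation at the level of the Taylor partial sums $S_N := \sum_{n = 0}^N (\vct{t} \cdot X)^n/n!$, where every step above is a finite, purely polynomial identity, and then passing to the limit as $N \to \infty$ inside the inner product $\inn{\cdot}{1} = E[\cdot]$. The dominated convergence needed for this limit, both for $\inn{U_i S_N}{1} \to \inn{U_i e^{\vct{t} \cdot X}}{1}$ and for the termwise differentiation used at the outset, is supplied by the absolute moment estimate $E[|X^{\vct{i}}|] \leq (2K)^{|\vct{i}|} |\vct{i}|!$ of Lemma \ref{moment_estimate_lemma}, which controls all series that arise provided $\vct{t}$ lies in the neighborhood $V$ of the origin on which $\varphi$ has already been shown to be analytic.
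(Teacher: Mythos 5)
Your proof is correct and follows essentially the same route as the paper: both reduce $\partial\varphi/\partial t_i$ to $\langle U_i e^{\vct{t}\cdot X}1,1\rangle$ via $X_i=U_i+V_i$, $V_i^{*}=U_i$, $U_i1=0$, then use the Leibniz commutator rule with the fact that $[U_i,\vct{t}\cdot X]=\sum_{j,k}\alpha_{i,j,k}t_jX_k+t_iI$ is a multiplication operator to collapse $[U_i,(\vct{t}\cdot X)^n]$ to $n[U_i,\vct{t}\cdot X](\vct{t}\cdot X)^{n-1}$ and re-sum the series. Your handling of the rigor issue (working with Taylor partial sums on $F$ and passing to the limit via the moment estimates of Lemma \ref{moment_estimate_lemma}) matches the paper's termwise series computation, so no gap remains.
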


\begin{proof}
As we saw in the previous lemma, there exists a neighborhood $V$ of $\vct{0}$, on which the Laplace transform of $\varphi$ is defined and infinitely many times differentiable. Moreover, on that neighborhood the differentiation can be carried out term by term in
the Taylor series of the exponential function, and the differentiation can be interchanged with the expectation.\\
Let $i \in \{1$, $2$, $\dots$, $d\}$ be fixed. For all $\vct{t} := (t_1$, $t_2$, $\dots$, $t_d) \in V$, we have:
\begin{eqnarray*}
& \ & \frac{\partial \varphi}{\partial t_i}(\vct{t}) \nonumber\\
& = & E\left[X_i\exp\left(t_1X_1 + t_2X_2 + \cdots + t_dX_d\right)\right] \nonumber\\
& = & \sum_{n = 0}^{\infty}\frac{1}{n!}E\left[X_i\left(t_1X_1 + t_2X_2 + \cdots + t_dX_d\right)^n\right] \nonumber\\
& = & \sum_{n = 0}^{\infty}\frac{1}{n!}\langle \left(U_i + V_i\right)\left(t_1X_1 + t_2X_2 + \cdots + t_dX_d\right)^n1, 1\rangle
\nonumber\\
& = & \sum_{n = 0}^{\infty}\frac{1}{n!}\langle U_i\left(t_1X_1 + t_2X_2 + \cdots + t_dX_d\right)^n1, 1\rangle \nonumber\\
& \ & +
\sum_{n = 0}^{\infty}\frac{1}{n!}\langle \left(t_1X_1 + t_2X_2 + \cdots + t_dX_d\right)^n1, U_i1\rangle \nonumber\\
& = & \sum_{n = 0}^{\infty}\frac{1}{n!}\langle U_i\left(t_1X_1 + t_2X_2 + \cdots + t_dX_d\right)^n1, 1\rangle,
\end{eqnarray*}
since $U_i1 = (1/2)E[X_i] = 0$.\\
We commute now $U_i$ and $(t_1X_1 + t_2X_2 + \cdots + t_dX_d)^n$, using Leibniz commutation rule, and obtain:
\begin{eqnarray*}
& \ & \frac{\partial \varphi}{\partial t_i}(\vct{t}) \nonumber\\
& = & \sum_{n = 0}^{\infty}\frac{1}{n!}\langle \left(t_1X_1 + t_2X_2 + \cdots + t_dX_d\right)^nU_i1, 1\rangle \nonumber\\
& \ & + \sum_{n = 0}^{\infty}\frac{1}{n!}\langle \left[U_i, \left(t_1X_1 + t_2X_2 + \cdots + t_dX_d\right)^n\right]1, 1\rangle \nonumber\\
\end{eqnarray*}
\begin{eqnarray*}
& = & \sum_{n = 0}^{\infty}\frac{1}{n!}\sum_{p = 1}^n\langle \left(t_1X_1 + t_2X_2 + \cdots + t_dX_d\right)^{p - 1}
\left[U_i, t_1X_1 + t_2X_2 + \cdots + t_dX_d\right] \nonumber\\
& \ & \ \ \ \ \ \ \ \ \ \ \ \ \ \ \ \left(t_1X_1 + t_2X_2 + \cdots + t_dX_d\right)^{n - p}1, 1\rangle \nonumber\\
\end{eqnarray*}
\begin{eqnarray*}
& = & \sum_{n = 0}^{\infty}\frac{1}{n!}\sum_{p = 1}^n\langle \left(t_1X_1 + t_2X_2 + \cdots + t_dX_d\right)^{p - 1}
\sum_{j = 1}^dt_j\left[U_i, X_j\right] \nonumber\\
& \ & \ \ \ \ \ \ \ \ \ \ \ \ \ \ \ \left(t_1X_1 + t_2X_2 + \cdots + t_dX_d\right)^{n - p}1, 1\rangle \nonumber\\
\end{eqnarray*}
\begin{eqnarray*}
& = & \sum_{n = 0}^{\infty}\frac{1}{n!}\sum_{p = 1}^n\langle \left(t_1X_1 + t_2X_2 + \cdots + t_dX_d\right)^{p - 1}
\sum_{j = 1}^dt_j\left(\sum_{k = 1}^d\alpha_{i, j, k}X_k + \delta_{i, j}I\right) \nonumber\\
& \ & \ \ \ \ \ \ \ \ \ \ \ \ \ \ \ \left(t_1X_1 + t_2X_2 + \cdots + t_dX_d\right)^{n - p}1, 1\rangle \nonumber\\
\end{eqnarray*}
\begin{eqnarray*}
& = & \sum_{n = 0}^{\infty}\sum_{p = 1}^n\sum_{j = 1}^d\sum_{k = 1}^d\frac{1}{n!}
\alpha_{i, j, k}t_j\langle X_k\left(t_1X_1 + t_2X_2 + \cdots + t_dX_d\right)^{n - 1}1, 1\rangle \nonumber\\
& \ & + \sum_{n = 0}^{\infty}\sum_{p = 1}^n\frac{1}{n!}
t_i\langle \left(t_1X_1 + t_2X_2 + \cdots + t_dX_d\right)^{n - 1}1, 1\rangle \nonumber\\
\end{eqnarray*}
\begin{eqnarray*}
& = & \sum_{n = 0}^{\infty}\sum_{j = 1}^d\sum_{k = 1}^dn\frac{1}{n!}
\alpha_{i, j, k}t_jE\left[X_k\left(t_1X_1 + t_2X_2 + \cdots + t_dX_d\right)^{n - 1}\right] \nonumber\\
& \ & + t_i\sum_{n = 0}^{\infty}n\frac{1}{n!}
E\left[\left(t_1X_1 + t_2X_2 + \cdots + t_dX_d\right)^{n - 1}\right] \nonumber\\
\end{eqnarray*}
\begin{eqnarray*}
& = & \sum_{j = 1}^d\sum_{k = 1}^d\alpha_{i, j, k}t_j\sum_{n = 1}^{\infty}\frac{1}{(n - 1)!}E\left[X_k\left(t_1X_1 + t_2X_2 + \cdots + t_dX_d\right)^{n - 1}\right] \nonumber\\
& \ & + t_i\sum_{n = 1}^{\infty}\frac{1}{(n - 1)!}E\left[\left(t_1X_1 + t_2X_2 + \cdots + t_dX_d\right)^{n - 1}\right] \nonumber\\
\end{eqnarray*}
\begin{eqnarray*}
& = & \sum_{j = 1}^d\sum_{k = 1}^d\alpha_{i, j, k}t_jE\left[X_k\exp\left(t_1X_1 + t_2X_2 + \cdots + t_dX_d\right)\right] \nonumber\\
& \ & + t_iE\left[\exp\left(t_1X_1 + t_2X_2 + \cdots + t_dX_d\right)\right] \nonumber\\
\end{eqnarray*}
\begin{eqnarray*}
& = & \sum_{j = 1}^d\sum_{k = 1}^d\alpha_{i, j, k}t_j\frac{\partial \varphi}{\partial t_k}\left(\vct{t}\right) + t_i\varphi\left(\vct{t}\right).
\end{eqnarray*}
The proof of this lemma is now complete.
\end{proof}

At this point, it will be desirable to integrate the system in Lemma \ref{L:syst}. If this is achieved, then inverting the Laplace transform
will produce the probability distribution of the 1-Meixner random vector $(X_1$, $X_2$, $\dots$, $X_d)$. While this seems to be a challenging
task, in the following two sections we present results in this direction. In the next section we derive an important necessary condition
for the integrability of the system, for any dimension $d \geq 2$, while in the last section we characterize (describe completely) the
$1$-Meixner random vectors in the case when the dimension is $d = 3$.

\section{Necessary conditions for integrability}
\label{S:nci}
In this section we find a set of  necessary conditions for the integrability of the system from Lemma \ref{L:syst}.
Provided the system has a smooth solution $\varphi$, then $\varphi$ is positive on a neighborhood $V$ of $\vct{0}$, since:
\begin{eqnarray*}
\varphi(\vct{0}) & = & E\left[\exp(\vct{0} \cdot X)\right] \nonumber\\
& = & 1.
\end{eqnarray*}
We introduce the following notations:
we denote by "$\langle \cdot , \cdot \rangle$" the inner product with respect to the original
probability $P$, and by "$\cdot$" the standard inner product in ${\mathbb R}^d$:
\begin{eqnarray*}
\left(x_1', x_2', \cdots, x_d'\right) \cdot \left(x_1'', x_2'', \dots, x_d''\right) & := & x_1'x_1'' + x_2'x_2'' + \cdots + x_d'x_d''.
\end{eqnarray*}
We have the following result:
\begin{lemma}
Let $\{\alpha_{i, j, k}\}_{1 \leq i, j, k \leq d}$ be real numbers, such that for every $(i$, $j$, $k) \in \{1$, $2$, $\dots$, $d\}^3$
and every permutation $\pi$ of $\{i$, $j$, $k\}$, we have:
\begin{eqnarray*}
\alpha_{\pi(i), \pi(j), \pi(k)} & = & \alpha_{i, j, k}.
\end{eqnarray*}
If the system of partial differential equations:
\begin{eqnarray}
\frac{\partial \varphi}{\partial t_1} & = & \sum_{j, k}\alpha_{1, j, k}t_j\frac{\partial \varphi}{\partial t_k} + t_1\varphi \nonumber\\
\frac{\partial \varphi}{\partial t_2} & = & \sum_{j, k}\alpha_{2, j, k}t_j\frac{\partial \varphi}{\partial t_k} + t_2\varphi \label{system}\\
\vdots & \vdots & \vdots \nonumber\\
\frac{\partial \varphi}{\partial t_d} & = & \sum_{j, k}\alpha_{d, j, k}t_j\frac{\partial \varphi}{\partial t_k} + t_d\varphi \nonumber
\end{eqnarray}
has a solution $\varphi$ of class $C^2$ defined on a neighborhood $V$ of ${\bf 0} = (0$, $0$, $\dots$, $0)$, such that $\varphi({\bf 0}) \neq 0$, then for all
$(i$, $j) \in \{1$, $2$, $\dots$, $d\}^2$ and ${\bf t} = (t_1$, $t_2$, $\dots$, $t_d)$ in a neighborhood of ${\bf 0}$, we have:
\begin{eqnarray}
\left(C_{i, j}{\bf t}\right) \cdot \left(\left(I - t_1A_1 - t_2A_2 - \cdots - t_dA_d\right)^{-1}{\bf t}\right) & = & 0, \label{inverse_necessary_condition}
\end{eqnarray}
where, for all $k \in \{1$, $2$, $\dots$, $d\}$, we define the $d \times d$ matrix:
\begin{eqnarray}
A_k & := & \left(\alpha_{k, r, s}\right)_{1 \leq r, s \leq d},
\end{eqnarray}
and
\begin{eqnarray}
C_{i, j} & := & \left[A_i, A_j\right]
\end{eqnarray}
is the commutator of $A_i$ and $A_j$, and $I$ is the $d \times d$ identity matrix.
\end{lemma}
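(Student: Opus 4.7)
The plan is to exploit a Frobenius-style commutator identity. First I would rewrite the $i$-th equation, using the symmetry $\alpha_{i,j,k}=\alpha_{i,k,j}$, as $\partial_i \varphi = (A_i\vct{t})\cdot \nabla\varphi + t_i\varphi$, and introduce the first-order linear differential operators $\cL_i := \partial_i - (A_i\vct{t})\cdot\nabla - t_i$, so the system reads $\cL_i \varphi = 0$ for $i=1,\dots,d$. Since $\varphi$ is $C^2$, any solution must satisfy $[\cL_i, \cL_j]\varphi = 0$ for every pair $(i,j)$, so the whole argument reduces to identifying $[\cL_i, \cL_j]$ and then isolating $\nabla\varphi$ from the system.

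The first key step is to compute $[\cL_i, \cL_j]$. Writing $X_i := (A_i\vct{t})\cdot\nabla$, the second-order pieces $\partial_i\partial_j$ cancel. The cross terms $[\partial_i, X_j]$ and $[\partial_j, X_i]$ are equal, because $(A_j\vct{e}_i)_l = \alpha_{j,l,i} = \alpha_{i,l,j} = (A_i\vct{e}_j)_l$ by full symmetry of $\alpha$; the multiplicative contribution $(A_i\vct{t})_j - (A_j\vct{t})_i$ vanishes for the same reason; and a direct calculation for vector-field commutators (using symmetry of each $A_k$) gives $[X_i, X_j] = -(C_{i,j}\vct{t})\cdot\nabla$. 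The Leibniz terms coming from $[X_i, t_j]$ and $[X_j, t_i]$ also cancel symmetrically. After collecting, one is left with $[\cL_i, \cL_j] = -(C_{i,j}\vct{t})\cdot\nabla$, a purely first-order operator, and hence $(C_{i,j}\vct{t})\cdot\nabla\varphi = 0$.

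The second key step is to solve algebraically for $\nabla\varphi$ from the system. Treating it as a linear system in $(\partial_1\varphi,\dots,\partial_d\varphi)$, the coefficient of $\partial_k\varphi$ in the $i$-th equation is $\sum_j \alpha_{i,j,k}t_j$, which by $\alpha_{i,j,k}=\alpha_{j,i,k}$ equals $\sum_j t_j (A_j)_{i,k} = (\sum_j t_j A_j)_{i,k}$. Thus the system is $(I - \sum_k t_k A_k)\nabla\varphi = \vct{t}\,\varphi$, and because $\sum_k t_k A_k$ vanishes at $\vct{0}$, the matrix $I - \sum_k t_k A_k$ is invertible on a neighborhood of $\vct{0}$, yielding $\nabla\varphi = \left(I - \sum_k t_k A_k\right)^{-1}\vct{t}\,\varphi$ there.

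Combining the two steps, substituting the formula for $\nabla\varphi$ into $(C_{i,j}\vct{t})\cdot\nabla\varphi = 0$ and dividing by the nonvanishing factor $\varphi$ (positive near $\vct{0}$ since $\varphi(\vct{0})=1$) produces precisely the claimed identity \eqref{inverse_necessary_condition}. The main obstacle is the commutator computation in the first step: each individual cancellation is elementary, but one must bookkeep the various Leibniz-type terms carefully and identify exactly which combinations vanish by full symmetry of $\alpha$ versus which assemble into $C_{i,j}\vct{t}$; the full symmetry assumption is used essentially, both in setting up the compact form of the PDE and in eliminating the spurious first-order and multiplicative contributions from $[\cL_i,\cL_j]$.
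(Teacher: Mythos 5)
Your proposal is correct and is essentially the paper's own argument in tidier packaging: writing the system as $\mathcal{L}_i\varphi=0$ and computing $[\mathcal{L}_i,\mathcal{L}_j]\varphi=0$ is exactly the paper's cross-differentiation of the $i$th and $j$th equations combined with equality of mixed partials and the full symmetry of $\alpha$, yielding the same identity $\left(C_{i,j}\mathbf{t}\right)\cdot\nabla\varphi=0$, after which both arguments invert $I-\sum_k t_kA_k$ near $\mathbf{0}$ and divide by the nonvanishing $\varphi$. Only cosmetic quibbles: the hypothesis is $\varphi(\mathbf{0})\neq 0$ (not necessarily $1$), which by continuity already suffices, and the identity $[X_i,X_j]=-\left(C_{i,j}\mathbf{t}\right)\cdot\nabla$ for the linear vector fields holds without using symmetry of the $A_k$, whereas symmetry is what makes the other cross terms cancel, as you say.
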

\begin{proof}
For $i = j$, $C_{i, j} = [A_i$, $A_i] = 0$, and so formula (\ref{inverse_necessary_condition}) is obvious. So, we may assume that $i \neq j$.\\
Let us differentiate both sides of the $i$th equation of the system (\ref{system}) with respect to the variable $t_j$. We have:
\begin{eqnarray}
\frac{\partial}{\partial t_j}\frac{\partial \varphi}{\partial t_i} & = & \frac{\partial}{\partial t_j}
\left(\sum_{p, q}\alpha_{i, p, q}t_p\frac{\partial \varphi}{\partial t_q} + t_i\varphi\right).
\end{eqnarray}
This is equivalent to:
\begin{eqnarray}
\frac{\partial^2 \varphi}{\partial t_j\partial t_i} & = &
\sum_q\alpha_{i, j, q}\frac{\partial \varphi}{\partial t_q} +
\sum_{p, q}\alpha_{i, p, q}t_p\frac{\partial^2 \varphi}{\partial t_j\partial t_q} +
t_i\frac{\partial \varphi}{\partial t_j}.
\end{eqnarray}
Applying now the Young's commutation theorem:
\begin{eqnarray}
\frac{\partial^2 \varphi}{\partial t_j\partial t_q} & = & \frac{\partial^2 \varphi}{\partial t_q\partial t_j},
\end{eqnarray}
for all $q \in \{1$, $2$, $\dots$, $d\}$, and using the $j$th equation of the system (\ref{system}), we obtain:
\begin{eqnarray}
\frac{\partial^2 \varphi}{\partial t_j\partial t_i} & = & \sum_q\alpha_{i, j, q}\frac{\partial \varphi}{\partial t_q}
 + \sum_{p, q}\alpha_{i, p, q}t_p\frac{\partial}{\partial t_q}\frac{\partial \varphi}{\partial t_j}
 + t_i\frac{\partial \varphi}{\partial t_j} \nonumber\\
& = & \sum_q\alpha_{i, j, q}\frac{\partial \varphi}{\partial t_q} +
\sum_{p, q}\alpha_{i, p, q}t_p\frac{\partial}{\partial t_q}
\left[\sum_{r, s}\alpha_{j, r, s}t_r\frac{\partial \varphi}{\partial t_s} + t_j\varphi\right]
+ t_i\frac{\partial \varphi}{\partial t_j} \nonumber\\
& = & \sum_q\alpha_{i, j, q}\frac{\partial \varphi}{\partial t_q} + \sum_{p, q, s}\alpha_{i, p, q}\alpha_{j, q, s}t_p\frac{\partial \varphi}{\partial t_s}
+ \sum_{p, q, r, s}\alpha_{i, p, q}\alpha_{j, r, s}t_pt_r
\frac{\partial^2\varphi}{\partial t_q\partial t_s}
 \nonumber\\
& \ &
+ \sum_{p}\alpha_{i, p, j}t_p\varphi +
	\sum_{p, q}\alpha_{i, p, q}t_pt_j\frac{\partial \varphi}{\partial t_q}
	+  t_i\frac{\partial \varphi}{\partial t_j}.
 \label{ij}
\end{eqnarray}
Switching the roles of $i$ and $j$, similarly, we can prove that:
\begin{eqnarray}
\frac{\partial^2 \varphi}{\partial t_i\partial t_j} & = & \sum_q\alpha_{j, i, q}\frac{\partial \varphi}{\partial t_q} + \sum_{p, q, s}\alpha_{j, p, q}\alpha_{i, q, s}t_p
\frac{\partial \varphi}{\partial t_s}
+ \sum_{p, q, r, s}\alpha_{j, p, q}\alpha_{i, r, s}t_pt_r
\frac{\partial^2\varphi}{\partial t_q\partial t_s}
\nonumber\\
& \ &
+ \sum_{p}\alpha_{j, p, i}t_p\varphi+
	\sum_{p, q}\alpha_{j, p, q}t_pt_i\frac{\partial \varphi}{\partial t_q}
	+  t_j\frac{\partial \varphi}{\partial t_i}.
\label{ji}
\end{eqnarray}
Since $\partial^2\varphi/(\partial t_i\partial t_j) = \partial^2\varphi/(\partial t_j\partial t_i)$, formulas (\ref{ij}) and
(\ref{ji}) imply:
\begin{eqnarray*}
& \sum_q\alpha_{i, j, q}\frac{\partial \varphi}{\partial t_q} + \sum_{p, q, s}\alpha_{i, p, q}\alpha_{j, q, s}t_p\frac{\partial \varphi}{\partial t_s}
+ \sum_{p, q, r, s}\alpha_{i, p, q}\alpha_{j, r, s}t_pt_r
\frac{\partial^2\varphi}{\partial t_q\partial t_s}
\nonumber\\
&+ \sum_{p}\alpha_{i, p, j}t_p\varphi +
	\sum_{p, q}\alpha_{i, p, q}t_pt_j\frac{\partial \varphi}{\partial t_q}
	+  t_i\frac{\partial \varphi}{\partial t_j}
\\
&= \sum_q\alpha_{j, i, q}\frac{\partial \varphi}{\partial t_q} + \sum_{p, q, s}\alpha_{j, p, q}\alpha_{i, q, s}t_p
\frac{\partial \varphi}{\partial t_s}
+ \sum_{p, q, r, s}\alpha_{j, p, q}\alpha_{i, r, s}t_pt_r
\frac{\partial^2{\color{blue} \varphi}}{\partial t_q\partial t_s}
\nonumber\\
& + \sum_{p}\alpha_{j, p, i}t_p\varphi+
	\sum_{p, q}\alpha_{j, p, q}t_pt_i\frac{\partial \varphi}{\partial t_q}	+  t_j\frac{\partial \varphi}{\partial t_i}.
\end{eqnarray*}
Since, we have: $\sum_q\alpha_{i, j, q}\frac{\partial \varphi}{\partial t_q} = \sum_q\alpha_{j, i, q}\frac{\partial \varphi}{\partial t_q}$,
$\sum_{p, q, r, s}\alpha_{i, p, q}\alpha_{j, r, s}t_pt_r\frac{\partial^2 \varphi}{\partial t_q\partial t_s} =
\sum_{p, q, r, s}\alpha_{j, p, q}\alpha_{i, r, s}t_pt_r\frac{\partial^2 \varphi}{\partial t_q\partial t_s}$,
and $\sum_{p}\alpha_{i, p, j} t_p\varphi = \sum_{p}\alpha_{j, p, i} t_p\varphi$,
 we conclude from the last formula
that:
\begin{eqnarray}
& &\sum_{p, q, s}\alpha_{i, p, q}\alpha_{j, q, s}t_p\frac{\partial \varphi}{\partial t_s}
	+\sum_{p, q}\alpha_{i, p, q}t_pt_j\frac{\partial \varphi}{\partial t_q}
	+  t_i\frac{\partial \varphi}{\partial t_j} \\
\label{E:mixedp}
&= &
 \sum_{p, q, s}\alpha_{j, p, q}\alpha_{i, q, s}t_p\frac{\partial \varphi}{\partial t_s}
 +\sum_{p, q}\alpha_{j, p, q}t_pt_i\frac{\partial \varphi}{\partial t_q}	+  t_j\frac{\partial \varphi}{\partial t_i}. \nonumber
\end{eqnarray}
 Note that from the system \eqref{system}
 	we have
 	\[ \sum_{p, q}\alpha_{i, p, q}t_pt_j\frac{\partial \varphi}{\partial t_q} =
 	t_j\lp{\frac{\partial \varphi}{\partial t_i}- t_i\vp},
 	\]
 	and similarly
 		\[ \sum_{p, q}\alpha_{j, p, q}t_pt_i\frac{\partial \varphi}{\partial t_q} =
 	t_i\lp{\frac{\partial \varphi}{\partial t_j}- t_j\vp}.
 	\]
 	Therefore, the equalities \eqref{E:mixedp} can be written:
 	\begin{eqnarray*}
 	& &\sum_{p, q, s}\alpha_{i, p, q}\alpha_{j, q, s}t_p\frac{\partial \varphi}{\partial t_s}
 		+t_j\frac{\partial \varphi}{\partial t_i}- t_jt_i\vp
 		+  t_i\frac{\partial \varphi}{\partial t_j} \\
 	&= &
 	\sum_{p, q, s}\alpha_{j, p, q}\alpha_{i, q, s}t_p\frac{\partial \varphi}{\partial t_s}
 + t_i\frac{\partial \varphi}{\partial t_j}- t_it_j\vp
 	+  t_j\frac{\partial \varphi}{\partial t_i}. \nonumber
 	\end{eqnarray*}	
That means:
\begin{eqnarray*}
\sum_{p, s}t_p\left(A_iA_j\right)_{ps}\frac{\partial \varphi}{\partial t_s} & = &
\sum_{p, s}t_p\left(A_jA_i\right)_{ps}\frac{\partial \varphi}{\partial t_s}.
\end{eqnarray*}
This means:
\begin{eqnarray*}
{\bf t} \cdot A_iA_j\nabla\varphi({\bf t}) & = & {\bf t}\cdot A_jA_i\nabla\varphi({\bf t}),
\end{eqnarray*}
which due to the fact that both $A_i$ and $A_j$ are self-adjoint matrices, is equivalent to:
\begin{eqnarray*}
A_jA_i{\bf t} \cdot \nabla\varphi({\bf t}) & = & A_iA_j{\bf t}\cdot \nabla\varphi({\bf t}).
\end{eqnarray*}
This last equation is equivalent to:
\begin{eqnarray}
\left[A_i, A_j\right]{\bf t} \cdot \nabla\varphi({\bf t}) & = & 0,
\label{commutator_nabla_orthogonal1}
\end{eqnarray}
for all ${\bf t}$ in a neighborhood $V$ of ${\bf 0}$.
Since our system of partial differential equations (\ref{system}) is equivalent to:
\begin{eqnarray*}
\nabla\varphi({\bf t}) & = & \left(I - t_1A_1 - t_2A_2 - \cdots - t_dA_d\right)^{-1}\varphi({\bf t}){\bf t},
\end{eqnarray*}
equation (\ref{commutator_nabla_orthogonal1}) is equivalent to:
\begin{eqnarray*}
\left[A_i, A_j\right]{\bf t} \cdot \left(I - t_1A_1 - t_2A_2 - \cdots - t_dA_d\right)^{-1}\varphi({\bf t}){\bf t} & = & 0, \label{commutator_nabla_orthogonal}
\end{eqnarray*}
and since $\varphi({\bf 0}) = 1 \neq 0$, we can divide the equation by $\varphi({\bf t})$, on a neighborhood $V$ of ${\bf 0}$, and conclude that:
\begin{eqnarray*}
\left[A_i, A_j\right]{\bf t} \cdot \left(I - t_1A_1 - t_2A_2 - \cdots - t_dA_d\right)^{-1}{\bf t} & = & 0.
\end{eqnarray*}
Thus, the Lemma is proved.
\end{proof}

\begin{proposition}
The necessary condition (\ref{inverse_necessary_condition}) is equivalent to:
\begin{eqnarray}
C_{i,j}{\bf t} \cdot \left(t_1A_1 + t_2A_2 + \cdots + t_dA_d\right)^n{\bf t} & = & 0,
\label{all_n_natural}
\end{eqnarray}
for all $n \in {\mathbb N}$ and all ${\bf t} = (t_1$, $t_2$, $\dots$, $t_d) \in {\mathbb R}^d$, which is turn is equivalent to:
\begin{eqnarray}
C_{i,j}{\bf t} \cdot \left(t_1A_1 + t_2A_2 + \cdots + t_dA_d\right)^n{\bf t} & = & 0, \label{all_n_less_than_d}
\end{eqnarray}
for all $1 \leq n \leq d - 1$ and all ${\bf t} = (t_1$, $t_2$, $\dots$, $t_d) \in {\mathbb R}^d$.
\end{proposition}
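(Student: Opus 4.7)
The plan is to handle the two equivalences separately, using the Neumann series expansion of $(I - t_1 A_1 - \cdots - t_d A_d)^{-1}$ for the first equivalence and the Cayley--Hamilton theorem for the second.

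For the first equivalence, I would begin with a key symmetry observation: the linear consistency conditions of Proposition \ref{P:LCC} give $\alpha_{k, r, s} = \alpha_{k, s, r}$, so each matrix $A_k$ is symmetric. Hence $C_{i, j} = A_i A_j - A_j A_i$ is antisymmetric, and the quadratic form $(C_{i, j}\vct{t}) \cdot \vct{t}$ vanishes identically in $\vct{t}$. Writing $M(\vct{t}) := t_1 A_1 + t_2 A_2 + \cdots + t_d A_d$, for $\vct{t}$ in a neighborhood of $\vct{0}$ the Neumann series $\sum_{n \geq 0} M(\vct{t})^n$ converges to $(I - M(\vct{t}))^{-1}$, so (\ref{inverse_necessary_condition}) is equivalent to
\[
\sum_{n = 0}^{\infty}\bigl(C_{i, j}\vct{t}\bigr) \cdot M(\vct{t})^n \vct{t} = 0
\]
on a neighborhood of $\vct{0}$. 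Each term in this series is a homogeneous polynomial in $\vct{t}$ of degree $n + 2$, so by uniqueness of the Taylor expansion the sum vanishes on the neighborhood if and only if each term vanishes as a polynomial; since a polynomial vanishing on a nonempty open set must vanish on all of $\mathbb{R}^d$, this is equivalent to the pointwise identity $(C_{i, j}\vct{t}) \cdot M(\vct{t})^n\vct{t} = 0$ for every $n \geq 0$ and every $\vct{t}\in\mathbb{R}^d$. The $n = 0$ term is automatically zero by the antisymmetry observation, and what remains is exactly (\ref{all_n_natural}).

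For the second equivalence, the implication (\ref{all_n_natural}) $\Rightarrow$ (\ref{all_n_less_than_d}) is trivial. For the converse, I would apply the Cayley--Hamilton theorem to the $d \times d$ matrix $M(\vct{t})$: its characteristic polynomial has degree $d$ with coefficients that are polynomial in the entries of $M(\vct{t})$, hence polynomial in $\vct{t}$, so there exist polynomial functions $c_0(\vct{t}), c_1(\vct{t}), \dots, c_{d - 1}(\vct{t})$ with
\[
M(\vct{t})^d = \sum_{k = 0}^{d - 1} c_k(\vct{t}) M(\vct{t})^k.
\]
Taking the inner product with $C_{i, j}\vct{t}$ on the left and $\vct{t}$ on the right, the $k = 0$ summand vanishes by antisymmetry and the summands with $1 \leq k \leq d - 1$ vanish by hypothesis, so the identity in (\ref{all_n_natural}) holds at $n = d$. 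A routine induction on $n \geq d$, multiplying the Cayley--Hamilton relation by $M(\vct{t})^{n - d}$, then extends the identity to every $n \in \mathbb{N}$.

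The argument is essentially routine once the two ingredients are identified: the antisymmetry of $C_{i, j}$ (which is what makes the $n = 0$ and $I$-terms drop out for free) and the Cayley--Hamilton reduction (which terminates the infinite family of conditions at $n = d - 1$). The subtlest point is perhaps justifying the passage from an identity of convergent series on a neighborhood of $\vct{0}$ to a sequence of polynomial identities on all of $\mathbb{R}^d$, but this is nothing more than the standard fact that a polynomial vanishing on a nonempty open subset of $\mathbb{R}^d$ must vanish identically.
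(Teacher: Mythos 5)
Your proof is correct and follows essentially the same route as the paper: expand $(I - t_1A_1 - \cdots - t_dA_d)^{-1}$ as a Neumann series, use homogeneity of degree $n+2$ to split the identity term by term and extend it from a neighborhood of $\vct{0}$ to all of ${\mathbb R}^d$, dispose of the $n = 0$ term via the skew--symmetry of $C_{i,j}$, and invoke Cayley--Hamilton to reduce the infinite family of conditions to $1 \leq n \leq d-1$. Your explicit induction for $n \geq d$ merely spells out what the paper summarizes by saying each $A_{\vct{t}}^n$ with $n \geq d$ is a linear combination of $I, A_{\vct{t}}, \dots, A_{\vct{t}}^{d-1}$.
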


\begin{proof}
Since, for all ${\bf t} = (t_1$, $t_2$, $\dots$, $t_d)$ in a neighborhood $V$ of ${\bf 0} = (0$, $0$, $\dots$. $0)$, we have:
\begin{eqnarray}
\left(I - t_1A_1 - t_2A_2 - \cdots - t_dA_d\right)^{-1} & = & \sum_{n = 0}^{\infty}
\left(t_1A_1 + t_2A_2 + \cdots + t_dA_d\right)^n,
\end{eqnarray}
equation (\ref{inverse_necessary_condition}) becomes:
\begin{eqnarray}
\sum_{n = 0}^{\infty}\left(C_{i, j}{\bf t}\right) \cdot
\left(\left(t_1A_1 + t_2A_2 + \cdots + t_dA_d\right)^n{\bf t}\right) & = & 0.
\end{eqnarray}
Due to the fact that, for all $n \geq 0$, $(C_{i, j}{\bf t}) \cdot (t_1A_1 + t_2A_2 + \cdots + t_dA_d)^n{\bf t}$
is a homogenous polynomial of degree $\lp{n + 2}$ in the variables $t_1$, $t_2$, $\dots$, $t_d$, we conclude that for
all $n \geq 0$, we have:
\begin{eqnarray*}
\left(C_{i, j}{\bf t}\right) \cdot
\left(\left(t_1A_1 + t_2A_2 + \cdots + t_dA_d\right)^n{\bf t}\right) & = & 0,
\end{eqnarray*}
for ${\bf t}$ not only in a neighborhood $V$ of ${\bf 0}$ but in the whole space ${\mathbb R}^d$.\\
For a fixed ${\bf t} = (t_1$, $t_2$, $\dots$, $t_d) \in {\mathbb R}^d$, using Cayley-Hamilton-Frobenius Theorem,
the matrix $A_{{\bf t}} := t_1A_1 + t_2A_2 + \cdots + t_dA_d$
satisfies  its own characteristic equation:
\begin{eqnarray}
\det\left(xI - A_{\bf t}\right) & = & 0,
\end{eqnarray}
which is a polynomial equation of degree $d$:
\begin{eqnarray}
x^d + c_{d - 1}x^{d - 1} + \cdots + c_1x + c_0 & = & 0,
\end{eqnarray}
for some real numbers $c_0$, $c_1$, $\dots$, $c_{d - 1}$.\\
It follows from here that each of the matrices: $A_{{\bf t}}^d$, $A_{{\bf t}}^{d + 1}$, $A_{{\bf t}}^{d + 2}$, $\dots$
is a linear combination of $I$, $A_{{\bf t}}$, $A_{{\bf t}}^2$, $\dots$, $A_{{\bf t}}^{d - 1}$. Thus, the condition:
for all $n \geq 0$, we have
\begin{eqnarray*}
\left(C_{i, j}{\bf t}\right) \cdot
\left(\left(t_1A_1 + t_2A_2 + \cdots + t_dA_d\right)^n{\bf t}\right) & = & 0,
\end{eqnarray*}
is equivalent to: for all $1 \leq n \leq d - 1$,
\begin{eqnarray*}
\left(C_{i, j}{\bf t}\right) \cdot
\left(\left(t_1A_1 + t_2A_2 + \cdots + t_dA_d\right)^n{\bf t}\right) & = & 0.
\end{eqnarray*}
Note that for $n = 0$, due to the fact that $C_{i, j} = [A_i$, $A_j]$ is skew-symmetric,
we have:
\begin{eqnarray*}
\left(C_{i, j}{\bf t}\right) \cdot {\bf t} & = & 0.
\end{eqnarray*}
\end{proof}
For $n = 1$, equation ({\ref{all_n_natural}}) becomes:
\begin{eqnarray}
\left(C_{i, j}{\bf t}\right) \cdot \left(\left(t_1A_1 + t_2A_2 + \cdots + t_dA_d\right){\bf t}\right) & = & 0. \label{n=1}
\end{eqnarray}
\begin{proposition}\label{P:Fconst}
For any $\xi \in {\mathbb R}^d$, the cubic homogenous polynomial:
\begin{eqnarray}
F\left({\bf t}\right) & := & \sum_{i, j, k}\alpha_{i, j, k}t_it_jt_k
\label{E:F}
\end{eqnarray}
is constant along the points of the curve:
\begin{eqnarray*}
{\bf t}(s, \xi) & := & \exp\left(sC_{i, j}\right)\xi,
\end{eqnarray*}
which satisfies the initial value problem:
\begin{eqnarray}
\left\{
\begin{array}{ccc}
\frac{d}{ds}{\bf t}(s, \xi) & = & C_{i, j}{\bf t}(s, \xi)\\
\ & \ & \ \\
{\bf t}(0, \xi) & = & \xi.
\end{array}
\right. \label{E:circ}
\end{eqnarray}
\end{proposition}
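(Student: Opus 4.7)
The plan is to reduce the statement to the necessary condition (\ref{n=1}) established in the previous proposition. I would begin by observing that the curve $\mathbf{t}(s,\xi)=\exp(sC_{i,j})\xi$ is simply the flow of the linear vector field $\mathbf{t}\mapsto C_{i,j}\mathbf{t}$; writing it as a matrix exponential is the standard solution of the linear initial value problem (\ref{E:circ}), so this part requires no real work. The substance is to check that $F$ is a first integral of this vector field, i.e.\ that the directional derivative $\nabla F(\mathbf{t})\cdot C_{i,j}\mathbf{t}$ vanishes identically on $\mathbb{R}^d$.

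The key computation is to identify $\nabla F$ in terms of the matrices $A_k$. Because the coefficients $\alpha_{i,j,k}$ are fully symmetric in their three indices (by part 3 of Proposition~\ref{P:LCC}), all three sums produced by the product rule in
\begin{eqnarray*}
\frac{\partial F}{\partial t_l}(\mathbf{t}) & = & \sum_{j,k}\alpha_{l,j,k}t_jt_k + \sum_{i,k}\alpha_{i,l,k}t_it_k + \sum_{i,j}\alpha_{i,j,l}t_it_j
\end{eqnarray*}
coincide, giving $\partial F/\partial t_l = 3\sum_{j,k}\alpha_{l,j,k}t_jt_k$. Using $\alpha_{l,j,k}=\alpha_{j,l,k}=(A_j)_{l,k}$, the sum equals $\lp{(t_1A_1+\cdots+t_dA_d)\mathbf{t}}_l$, so
\begin{eqnarray*}
\nabla F(\mathbf{t}) & = & 3\lp{t_1A_1 + t_2A_2 + \cdots + t_dA_d}\mathbf{t}.
\end{eqnarray*}

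With this identification in hand, I would compute
\begin{eqnarray*}
\frac{d}{ds}F\lp{\mathbf{t}(s,\xi)} & = & \nabla F\lp{\mathbf{t}(s,\xi)}\cdot C_{i,j}\mathbf{t}(s,\xi)\\
& = & 3\lp{C_{i,j}\mathbf{t}}\cdot \lp{\lp{t_1A_1+t_2A_2+\cdots+t_dA_d}\mathbf{t}},
\end{eqnarray*}
where on the right $\mathbf{t}=\mathbf{t}(s,\xi)$. The right-hand side is exactly the expression appearing in equation (\ref{n=1}), which the preceding proposition shows vanishes for all $\mathbf{t}\in\mathbb{R}^d$. Therefore $\frac{d}{ds}F(\mathbf{t}(s,\xi))\equiv 0$, and $F$ is constant on the orbit, with value $F(\xi)$.

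I do not anticipate a real obstacle here; the entire argument is essentially an unpacking of definitions, with the main insight being that the symmetry of $\alpha_{i,j,k}$ forces $\nabla F$ to be proportional to $A_{\mathbf{t}}\mathbf{t}$, so that the necessary condition (\ref{n=1}) is precisely the statement that $F$ is a first integral of each of the commutator vector fields $\mathbf{t}\mapsto C_{i,j}\mathbf{t}$. The only care needed is to invoke the full permutation symmetry from Proposition~\ref{P:LCC} (not just the pairwise symmetry) when collapsing the three partial derivatives into a single multiple of $3$.
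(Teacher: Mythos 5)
Your argument is correct and matches the paper's own proof essentially step for step: both compute $\nabla F(\mathbf{t}) = 3\lp{t_1A_1+\cdots+t_dA_d}\mathbf{t}$ using the full permutation symmetry of the $\alpha_{i,j,k}$, and then identify $\frac{d}{ds}F(\mathbf{t}(s,\xi)) = 3\lp{C_{i,j}\mathbf{t}}\cdot\lp{\lp{\sum_k t_kA_k}\mathbf{t}} = 0$ via equation (\ref{n=1}). No gaps; nothing further is needed.
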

\begin{proof}
Indeed, for any $i \in \{1$, $2$, $\dots$, $d\}$, we have (Euler's formula):
\begin{eqnarray}
\frac{\partial F}{\partial t_i}({\bf t}) & = & \sum_{p,q,r}\alpha_{p,q,r}\frac{\partial}{\partial t_i}\left(t_pt_qt_r\right) \nonumber\\
& = & \sum_{p,q,r}\alpha_{p,q,r}\left(\delta_{ip}t_qt_r + t_p\delta_{iq}t_r + t_pt_q\delta_{ir}\right) \nonumber\\
& = & \sum_{q,r}\alpha_{i,q,r}t_qt_r + \sum_{p,r}\alpha_{p,i,r}t_pt_r + \sum_{p,q}\alpha_{p,q,i}t_pt_q \nonumber\\
& = & 3\sum_{j, k}\alpha_{i,j,k}t_jt_k,
\end{eqnarray}
since $\alpha_{u,v,w} = \alpha_{\pi(u),\pi(v),\pi(w)}$, for all $(u$, $v$, $w) \in \{1$, $2$, $\dots$, $d\}^3$ and any
$\pi$ permutation of the triplet $(u,v,w)$.
It follows from here that:
\begin{eqnarray}
 \frac{d}{ds}F\left({\bf t}(s, \xi)\right)
& = & \nabla F\left({\bf t(s, \xi)}\right) \cdot \frac{d}{ds}{\bf t}(s, \xi) \nonumber\\
& = & \nabla F\left({\bf t(s, \xi)}\right) \cdot C_{i, j}{\bf t}(s, \xi) \nonumber\\
& = & \left(3\sum_{j, k}\alpha_{1,j,k}t_jt_k, 3\sum_{j, k}\alpha_{2,j,k}t_jt_k, \dots, 3\sum_{j, k}\alpha_{d,j,k}t_jt_k,\right)
\cdot C_{i, j}{\bf t}(s, \xi) \nonumber\\
& = & 3\left(\sum_{j, k}t_j\alpha_{j,1,k}t_k, \sum_{j, k}t_j\alpha_{j,2,k}t_k, \dots, \sum_{j, k}t_j\alpha_{j,d,k}t_k,\right)
\cdot C_{i, j}{\bf t}(s, \xi) \nonumber\\
& = & 3\left(\sum_{j = 1}^d\left(t_jA_j{\bf t}\right)_{1}, \sum_{j = 1}^d\left(t_jA_j{\bf t}\right)_{2}, \dots,
\sum_{j = 1}^d\left(t_jA_j{\bf t}\right)_{d}\right) \cdot C_{i, j}{\bf t}(s, \xi) \nonumber\\
& = & 3\left(\left(\sum_{j = 1}^dt_jA_j\right){\bf t}\right) \cdot C_{i, j}{\bf t}(s, \xi)\nonumber\\
& = & 0,
\end{eqnarray}
by equation (\ref{n=1}).
\end{proof}

\begin{proposition}
	\label{P:3moment}
For all $\vct{t} = (t_1$, $t_2$, $\dots$, $t_d) \in {\mathbb R}^d$, we have:
\[
F\left(\vct{t}\right)  =  \frac{1}{2}E\left[\left(\vct{t} \cdot X\right)^3\right],
\]
where $X := (X_1$, $X_2$, $\dots$, $X_d)$.
\end{proposition}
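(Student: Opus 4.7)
The plan is to expand $(\vct{t} \cdot X)^3$ trilinearly and invoke the third moment identity already established. Specifically, writing $\vct{t} \cdot X = \sum_{i=1}^d t_i X_i$, we get
\[
E\left[(\vct{t} \cdot X)^3\right] = E\left[\sum_{i,j,k=1}^d t_i t_j t_k X_i X_j X_k\right] = \sum_{i,j,k=1}^d t_i t_j t_k \, E[X_i X_j X_k],
\]
where the expectation passes through the finite sum by linearity.

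The key input is the corollary following Proposition \ref{P:LCC}, which gives
\[
E[X_i X_j X_k] = 2\alpha_{i,j,k}
\]
for every triple $(i,j,k) \in \{1,2,\dots,d\}^3$. Substituting this identity into the expansion yields
\[
E\left[(\vct{t} \cdot X)^3\right] = 2\sum_{i,j,k=1}^d \alpha_{i,j,k}\, t_i t_j t_k = 2 F(\vct{t}),
\]
and dividing by $2$ gives the claim.

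There is really no substantive obstacle here: the result is essentially a repackaging of formula (\ref{third_mixed_moment}) combined with the trilinear expansion of the cube. The only thing one might want to note for tidiness is that the full symmetry $\alpha_{\pi(i),\pi(j),\pi(k)} = \alpha_{i,j,k}$ (established in Proposition \ref{P:LCC}) is automatically compatible with this expansion, because the corresponding third moments $E[X_i X_j X_k]$ are likewise symmetric in the indices as the $X_i$ commute; no rearrangement is actually needed for the argument to go through.
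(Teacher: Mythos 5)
Your proof is correct and follows essentially the same route as the paper: expand $(\vct{t}\cdot X)^3$ trilinearly and substitute the identity $E[X_iX_jX_k]=2\alpha_{i,j,k}$ from formula (\ref{third_mixed_moment}). The paper simply runs the same chain of equalities starting from $F(\vct{t})$ instead of from the expectation.
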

\proof
Indeed, using formula (\ref{third_mixed_moment}), we have:
\begin{eqnarray*}
F\left(\vct{t}\right) & = & \sum_{i, j, k}\alpha_{i, j, k}t_it_jt_k \nonumber\\
& = & \sum_{i, j, k}\frac{1}{2}E\left[X_iX_jX_k\right]t_it_jt_k \nonumber\\
& = & \frac{1}{2}\sum_{i, j, k}E\left[t_iX_it_jX_jt_kX_k\right] \nonumber\\
& = & \frac{1}{2}E\left[\left(\sum_it_iX_i\right)\left(\sum_jt_jX_j\right)\left(\sum_kt_kX_k\right)\right] \nonumber\\
& = & \frac{1}{2}E\left[\left(\vct{t} \cdot X\right)^3\right].
\end{eqnarray*}
\endproof

\section{The case $d=3$}
\label{S:de3}

In this section we restrict  our attention to the case $d = 3$.
	We give a complete description of
	the non-degenerate 3-dimensional 1-Meixner random vectors, which is contained in Theorem~\ref{T:main},
	stated at the end of the section.\\
	It turns out that unlike the case $d=2$, in $\reo^3$ there exist random vectors whose components
	{\em are not} independent one-dimensional random variables.\\
\ \\
We distinguish between two cases:\\
\ \\
	{\bf Case I.} At least one of the commutators $[A_1,A_2]$, $[A_2,A_3]$, or $[A_1,A_3]$ is non-zero.
\begin{lemma}
	\label{L:Fcanonical}
	If any one of the commutators $[A_1,A_2]$, $[A_2,A_3]$, or $[A_1,A_3]$, is non-zero,
	 then there exists an orthogonal matrix $U$ such that
	 the cubic form:
	\[F_X\left({\bf t}\right)  :=  \sum_{i, j, k}\alpha_{i, j, k}t_it_jt_k\]
is written in the canonical form
	\[F_X\left(U^{-1}{\bf s}\right)= 	F_{UX} \left({\bf s}\right) =
	3as_3(s_1^2+s_2^2)+as_3^3,\]
	for some $a\neq 0$.
\end{lemma}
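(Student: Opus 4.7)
My plan is to combine Proposition~\ref{P:Fconst} with the dimension-three fact that the only closed subgroups of $SO(3)$ containing a non-trivial one-parameter subgroup are $SO(3)$ itself and the one-parameter rotation groups $SO(2)$ about a fixed axis. First I would observe that $F_X$ cannot be identically zero: otherwise every $\alpha_{i,j,k}$, every matrix $A_k$, and every commutator $[A_i,A_j]$ would vanish, contradicting the hypothesis. Proposition~\ref{P:Fconst} then says that $F_X$ is constant along the flow of $\exp(sC_{i,j})$ for every $(i,j)$, and each non-zero $3 \times 3$ skew-symmetric matrix generates a one-parameter group of rotations about a unique axis. Consequently the stabilizer of $F_X$ in $SO(3)$ is a closed subgroup with a non-trivial connected part, so it equals either all of $SO(3)$ or a copy of $SO(2)$.

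Next, I would rule out the $SO(3)$ case: the algebra of $SO(3)$-invariants on $\reo^3$ is generated by $|\vct{s}|^2$, which has even degree, so no non-zero cubic is $SO(3)$-invariant. Hence the stabilizer is $SO(2)$ about some unit vector $\vct{n}$, and choosing an orthogonal $U$ that sends $\vct{n}$ to $e_3$, the cubic $F_{UX}$ becomes invariant under all rotations about the $s_3$-axis, which forces
\begin{eqnarray*}
F_{UX}(\vct{s}) & = & c_1 s_3^3 + c_2 s_3 \bigl(s_1^2 + s_2^2\bigr),
\end{eqnarray*}
for some reals $c_1, c_2$.

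The last task is to pin down the relation $c_2 = 3 c_1$ using the matrix structure. Reading off the transformed coefficient tensor (non-zero entries $\alpha'_{3,3,3}=c_1$ and $\alpha'_{1,1,3}=\alpha'_{2,2,3}=c_2/3$), the matrices $A_1', A_2', A_3'$ are explicit, and a direct calculation shows that $[A_1', A_2']$ is proportional to the generator of $e_3$-rotations with factor $c_2^2/9$, while $[A_1', A_3']$ and $[A_2', A_3']$ are proportional to the generators of $e_2$- and $e_1$-rotations, each with common factor $c_2(3c_1-c_2)/9$. Because the last two axes are non-parallel to $e_3$, if either of these commutators were non-zero, Proposition~\ref{P:Fconst} would force $F_{UX}$ to be invariant under rotations about two non-parallel axes, thereby invariant under all of $SO(3)$, hence identically zero, a contradiction. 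So $c_2(3c_1-c_2)=0$. The case $c_2=0$ would kill every commutator and, by covariance of commutators under $U$, also every original $[A_i,A_j]$, contradicting the hypothesis; therefore $c_2 = 3c_1$, and setting $a := c_1$ yields the canonical form with $a \neq 0$.

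I expect the main obstacle to be the explicit commutator computation that isolates the factor $c_2(3c_1-c_2)$. Once that factor is in hand, the conceptual heart of the argument is the clean symmetry reduction that exploits the three-dimensional fact that two non-parallel rotation generators already span $\mathfrak{so}(3)$.
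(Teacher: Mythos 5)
Your proposal is correct and follows the same skeleton as the paper: Proposition~\ref{P:Fconst} makes $F_X$ invariant under the rotations generated by a non-zero commutator, an orthogonal change of variables reduces the cubic to $c_1s_3^3+c_2s_3(s_1^2+s_2^2)$, and the commutators of the transformed matrices force $c_2\neq 0$ and $c_2=3c_1$. Where you genuinely diverge is in the closing steps. The paper obtains $b=a$ (your $c_2=3c_1$) by computing $C_{2,3}'=[A_2',A_3']$ and evaluating the quadratic necessary condition (\ref{n=1}) at ${\bf t}=e_2$, which yields $-a^2(b-a)=0$ in one line; you instead apply Proposition~\ref{P:Fconst} to the transformed vector and argue that a non-zero $[A_2',A_3']$ or $[A_1',A_3']$ would make $F_{UX}$ invariant about a second, non-parallel axis, hence $SO(3)$-invariant, hence zero --- a more conceptual route, though it rests on the same machinery since Proposition~\ref{P:Fconst} is itself a consequence of (\ref{n=1}). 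Likewise, for $a\neq 0$ the paper notes that $a=0$ forces the coefficient tensor to be rank one so that $[A_1,A_2]=0$, while you pull the vanishing of the primed commutators back through $[A_i',A_j']=\sum_{r,t}u_{i,r}u_{j,t}U[A_r,A_t]U^T$; both work. Two small remarks: your classification of closed subgroups of $SO(3)$ omits the $O(2)$-type subgroups, but this is harmless because all you actually need is that the stabilizer contains the full circle $\{\exp(sC_{i,j})\}_{s}$ of rotations about $\ker C_{i,j}$, which Proposition~\ref{P:Fconst} gives directly; and when you invoke Proposition~\ref{P:Fconst} and the commutator covariance for $A_1',A_2',A_3'$, you should record (as the paper does via Proposition~\ref{transmissibility_proposition} and Proposition~\ref{P:3moment}) that $UX$ is again a centered, orthonormal, non-degenerate $1$-Meixner vector whose coefficient tensor is exactly the transformed tensor you read off, so the necessary conditions really do apply to the primed matrices.
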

\proof
  Without loss of generality we may assume that $C=C_{1,2}:= [A_1,A_2] \neq 0$.
Since $C$ is a skew--symmetric real matrix, it must have a non--zero
purely imaginary eigenvalue $i\lambda$.
Then there exists an orthonormal basis $\{f_1$, $f_2$, $f_3\}$ of ${\mathbb R^3}$, such that:
\begin{eqnarray*}
C f_1 & = & \lambda f_2\\
C f_2 & = & -\lambda f_1\\
C f_3 & = & 0.
\end{eqnarray*}
Let $\vct{\xi} := xf_1 + yf_2 + zf_3\in {\mathbb R}^3$, where $x$, $y$, and $z$ are fixed real numbers.\\
In the basis $\{f_1$, $f_2$, $f_3\}$, the curve (level curve for $F_X$) described in Proposition~\ref{P:Fconst} is:
\begin{eqnarray*}
\vct{t}(s,\vct{\xi})& = & \exp\left(sC \right)\vct{\xi} \nonumber\\
& = & \sum_{n = 0}^{\infty}\frac{s^n}{n!}C^n\left(xf_1 + yf_2 + zf_3\right) \nonumber\\
& = & zf_3 + \sum_{n = 0}^{\infty}\frac{s^n}{n!}C^n\left(xf_1 + yf_2\right) \nonumber\\
& = & zf_3 + x\sum_{n = 0}^{\infty}\frac{s^{2n}}{(2n)!}(-1)^n\lambda^{2n}f_1
+ y\sum_{n = 0}^{\infty}\frac{s^{2n}}{(2n)!}(-1)^n\lambda^{2n}f_2
\nonumber\\
& \ & + x\sum_{n = 0}^{\infty}\frac{s^{2n + 1}}{(2n + 1)!}(-1)^n\lambda^{2n + 1}f_2 -
y\sum_{n = 0}^{\infty}\frac{s^{2n + 1}}{(2n + 1)!}(-1)^n\lambda^{2n + 1}f_1 \nonumber\\
& = & zf_3 + \left[x\cos(\lambda s) - y\sin(\lambda s)\right]f_1 + \left[x\sin(\lambda s) + y\cos(\lambda s)\right]f_2.
\end{eqnarray*}
The right hand side in the last formula represents the parametric equation of a circle centered at $(0$, $0$, $z)$, that sits in a plane
perpendicular to $f_3$ and has radius $r = \sqrt{x^2 + y^2}$.
Therefore, by Proposition~\ref{P:Fconst}, $F_X$ is constant on all circles that are centered at a point found on the
${\mathbb R}f_3$--axis and sit in a plane perpendicular to $f_3$.\\
Now, let us consider the orthogonal transformation $U$ that maps the basis
$f_1$, $f_2$, and $f_3$ into the standard basis
$e_1 = (1$, $0$, $0)$, $e_2 = (0$, $1$, $0)$, and $e_3 = (0$, $0$, $1)$, where
$f_1$, $f_2$, and $f_3$ is a basis of $\reo^3$, such that $F$ is rotationally invariant about $f_3$. Therefore,
\[Uf_1 = e_1, \ \ \ Uf_2 = e_2, \ \ \ Uf_3 = e_3.\]
Since $U$ is an orthogonal transformation, it preserves the standard inner product in $\reo^3$. By Proposition~\ref{P:3moment}
we have:
\begin{eqnarray*}
F_{X}(\vct{t}) & = & \frac{1}{2}E\left[\left(\vct{t} \cdot X \right)^3\right] \nonumber\\
& = & \frac{1}{2}E\left[\left(U\vct{t} \cdot UX \right)^3\right] \nonumber\\
& = & F_{UX}(U\vct{t})\\
& = & \sum_{i, j, k}\beta_{i, j, k}s_is_js_k,
\end{eqnarray*}
where:
\begin{eqnarray*}
\vct{s} & = & (s_1, s_2, s_3) \nonumber\\
& := & U\vct{t}
\end{eqnarray*}
and $\beta_{i, j, k}$ are the numbers $\alpha_{i, j, k}$ that correspond to the new non-degenerate $1$-Meixner
random vector:
\begin{eqnarray*}
X' & := & UX.
\end{eqnarray*}
Let us consider a circle of the form:
\begin{eqnarray}
{\mathcal C}_e(c, r) & := & \{s_1e_1 + s_2e_2 + s_3e_3 \mid s_1^2 + s_2^2 = r^2, s_3 = c\},
\end{eqnarray}
for some fixed numbers $c$ and $r$.
For any ${\bf s} \in {\mathcal C}_e(c, r)$, we have:
\begin{eqnarray}
F_{X'}({\bf s}) & = & F_{UX}\left(U\left(U^{-1}{\bf s}\right)\right) \nonumber\\
& = & F_X\left(U^{-1}{\bf s}\right) \nonumber\\
& = & F_X\left(U^{-1}\left(s_1e_1 + s_2e_2 + s_3e_3\right)\right) \nonumber\\
& = & F_X\left(s_1U^{-1}e_1 + s_2U^{-1}e_2 + s_3U^{-1}e_3\right) \nonumber\\
& = & F_X\left(s_1f_1 + s_2f_2 + s_3f_3\right) \nonumber\\
& = & {\rm constant},
\end{eqnarray}
since $F_X$ is constant along circles of the form:
\begin{eqnarray}
{\mathcal C}_f(c, r) & := & \{s_1f_1 + s_2f_2 + s_3f_3 \mid s_1^2 + s_2^2 = r^2, s_3 = c\}.
\end{eqnarray}
Thus $F_{X'} = F_{UX}$ is constant along the circles ${\mathcal C}_e(a$, $r)$,
for which $s_3$ is constant and $s_1^2 + s_2^2$ is constant.
Because $F_{X'}$ is a third--degree homogeneous polynomial
in the variables $s_1$, $s_2$, and $s_3$, we must have:
\begin{eqnarray*}
F_{X'}({\bf s}) & = & 3a\left(s_1^2 + s_2^2\right)s_3 + bs_3^3,
\end{eqnarray*}
for some real numbers $a$ and $b$. \\
We argue that we cannot have $a=0$. Indeed, if this was the case, then
	\[F_{X'}({\bf s}) = bs_3^3, \ \ \mbox{ and so } \ \ F_X({\bf t}) = b({\bf u}\cdot {\bf t})^3,  \]
	where ${\bf u} = (u_1,u_2,u_3)$ is the third row of $U$ (this also means that ${\bf u}=f_3$).
	Therefore,
	\[ \alpha_{i,j,k} = bu_iu_ju_k, \ \ \mbox{ for } \ \ (i,j,k) \in \lc{1,2,3}^3.\]
	However, this implies that either, one (or both) of the matrices $A_1$ and $A_2$ is zero, or else, they have proportional entries. In either case, the commutator $[A_1,A_2]=0$, which contradicts our hypothesis.\\
Since we have:
\begin{eqnarray*}
F_{X'}({\bf s}) & = & \sum_{i, j, k}\beta_{i, j, k}s_is_js_k,
\end{eqnarray*}
identifying the coefficients of $s_is_js_k$, for all possible values of $i$, $j$, and $k$ in $\{1$, $2$, $3\}$, we obtain:
\begin{eqnarray*}
\beta_{i, j, k} & := & \left\{
\begin{array}{cl}
a &  \mbox{if} \ (i,j,k) \ \mbox{ is a permutation of} \ (1,1,3)  \ {\rm or } \ (2,2,3) \\
b & \mbox{if} \  i = j = k = 3\\
0 &  {\rm otherwise}
\end{array}
\right..
\end{eqnarray*}
This implies that the matrices corresponding to the new three--dimensional $1$--Meixner random vector
$X' = (X_1'$, $X_2'$, $X_3')$ are:
 \begin{equation}
 \label{E:Aprime}
  A_1' = \begin{bmatrix}
0 & 0 & a \\
0& 0 & 0 \\
a & 0 & 0 \\
\end{bmatrix},
 \
A_2' = \begin{bmatrix}
0 & 0 & 0 \\
0& 0 & a \\
0 & a & 0 \\
\end{bmatrix},
 \
A_3' =
\begin{bmatrix}
a & 0 & 0 \\
0& a & 0 \\
0 & 0 & b \\
\end{bmatrix}.
\end{equation}
Therefore
\[C_{1,2}' = \ls{A_1',A_2'} =
\begin{bmatrix}
0 & a^2 & 0 \\
-a^2& 0 & 0 \\
0 & 0 & 0 \\
\end{bmatrix} \neq 0, \ \  \mbox{ as } \ \ a\neq 0.\]
On the other hand,  consider the commutator
\[C'_{2,3} = \ls{A_2',A_3'} =
\begin{bmatrix}
0 & 0 & 0 \\
0 & 0 & a(b-a) \\
0 & -a(b-a) & 0 \\
\end{bmatrix}.\]
Applying equation (\ref{n=1}) to $C'_{2,3}$ and ${\bf t} := e_2 = (0$, $1$, $0)$, we obtain:
\begin{eqnarray}
\left(C'_{2,3}e_2\right) \cdot \left(\left(0A_1' + 1A_2' + 0A_3'\right)e_2\right) & = & 0. \label{particular_n=1}
\end{eqnarray}
Since $C'_{2,3}e_2 = (0$, $0$, $-a(b - a)) = -a(b - a)e_3$ and $A_2'e_2 = (0$, $0$, $a) = ae_3$,
equation (\ref{particular_n=1}) becomes:
\begin{eqnarray}
-a^2(b - a) & = & 0. \label{a=b}
\end{eqnarray}
Because $a \neq 0$, we conclude from (\ref{a=b}),
that $b = a$, which also implies $C'_{2,3} = C'_{1,3} = 0$.
Therefore, the proof of this lemma is concluded.
\endproof

	In the next lemma we integrate the system \eqref{system} in the particular case of a canonical random vector (with associated cubic form as in Lemma~\ref{L:Fcanonical}).  Consequently,
	we  produce an explicit formula for the Laplace transform of its probability distribution.
	\begin{lemma}
		\label{L:phi_canonical}
		Let $X'$ be a random vector with associated  cubic form
		\[F_{X'} \lp{{\bf s}} =  3as_3(s_1^2+s_2^2)+as_3^3,\]
		for some $a\neq 0$. Then, the Laplace transform
		$\ds \vp(\vct{s})= E\ls{\exp \lp{{\bf s} \cdot X'}}$ is given
			by the formula
		\begin{equation}
		\label{Laplace_transform_formula}
		\vp(\vct{s})  =
		e^{-(1/a)s_3}  \lp{-a^2s_1^2 - a^2s_2^2 + (1 - as_3)^2}^{-1/(2a^2)},
		\end{equation}
		which is analytic in the interior of the cone
		\[ D:= \lc{ {\bf s} \in \reo^3 \ | \ |a|\sqrt{s_1^2 + s_2^2} < {1 - as_3}}.\]
	\end{lemma}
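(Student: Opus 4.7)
The plan is to specialize the general PDE system of Lemma~\ref{L:syst} to the prescribed cubic form, then integrate by passing to logarithms. Matching $F_{X'}(\vct{s}) = 3as_3(s_1^2+s_2^2) + as_3^3$ against $\sum \alpha'_{i,j,k}\, s_is_js_k$ recovers exactly the entries of (\ref{E:Aprime}) with $b = a$: the $\alpha'_{i,j,k}$ equal $a$ when $(i,j,k)$ is a permutation of $(1,1,3)$, $(2,2,3)$, or $(3,3,3)$, and are zero otherwise. Substituting into Lemma~\ref{L:syst} and collecting the $\partial_{s_3}\varphi$ terms on the right yields
\begin{align*}
(1 - as_3)\,\partial_{s_1}\varphi &= as_1\,\partial_{s_3}\varphi + s_1\varphi,\\
(1 - as_3)\,\partial_{s_2}\varphi &= as_2\,\partial_{s_3}\varphi + s_2\varphi,\\
(1 - as_3)\,\partial_{s_3}\varphi &= as_1\,\partial_{s_1}\varphi + as_2\,\partial_{s_2}\varphi + s_3\varphi.
\end{align*}

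Next, I would set $u := \log \varphi$, use the first two equations to express $\partial_{s_1}u$ and $\partial_{s_2}u$ as rational combinations of $\partial_{s_3}u$, and substitute into the third. With $W := (1 - as_3)^2 - a^2(s_1^2 + s_2^2)$, the algebra collapses to
\[
\partial_{s_1}u = \frac{s_1}{W}, \qquad \partial_{s_2}u = \frac{s_2}{W}, \qquad \partial_{s_3}u = \frac{a(s_1^2 + s_2^2) + s_3(1 - as_3)}{W}.
\]
Since $\partial_{s_i}\!\left(-\tfrac{1}{2a^2}\log W\right) = s_i/W$ for $i = 1,2$, the ansatz $u = -\tfrac{1}{2a^2}\log W + h(s_3)$ matches the first two identities. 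Matching the $s_3$-equation reduces to $h'(s_3) = -1/a$, and the normalization $\varphi(\vct{0}) = 1$ forces $h(0) = 0$, so $h(s_3) = -s_3/a$, which reproduces (\ref{Laplace_transform_formula}).

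For analyticity, the only obstruction on the right-hand side of (\ref{Laplace_transform_formula}) is the branch locus of $W^{-1/(2a^2)}$, so I would take the connected component of $\{W > 0\}$ containing the origin. At $\vct{0}$ one has $1 - as_3 = 1 > 0$ and $W = 1$, and on that component the inequality $W > 0$ combined with positivity of $1 - as_3$ is equivalent to $|a|\sqrt{s_1^2 + s_2^2} < 1 - as_3$, which is exactly the cone $D$.

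The step I expect to be the most delicate is the algebraic simplification that turns the $s_3$-equation into the very clean relation $h'(s_3) = -1/a$: one must verify that $a\bigl[a(s_1^2+s_2^2) + s_3(1-as_3)\bigr] - (1-as_3) = -W$, which is the crux of the collapse. One should also note that the three partial derivatives of $u$ obtained in the second step are automatically compatible, either because $\varphi$ is known a priori to be $C^2$ by the moment estimates, or directly from the antiderivative constructed in that step.
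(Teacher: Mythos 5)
Your proposal is correct and follows essentially the same route as the paper: both pass to $u=\log\varphi$, reduce the system to the explicit gradient $\nabla u = \bigl(s_1/W,\; s_2/W,\; (a(s_1^2+s_2^2)+s_3(1-as_3))/W\bigr)$ with $W=(1-as_3)^2-a^2(s_1^2+s_2^2)$, and integrate with the normalization $\varphi(\vct{0})=1$; the only cosmetic difference is that the paper computes the full matrix inverse $(I-B(\vct{s}))^{-1}$ while you solve the linear system by substitution, and you spell out the domain-of-analyticity argument slightly more explicitly than the paper does.
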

	\proof
Since $\varphi({\bf 0}) = 1 \neq 0$ and $\varphi$ is continuous at ${\bf 0}$, we can divide both sides of
each equation of the system (\ref{system}) by $\varphi({\bf s})$, for ${\bf s}$ in a neighborhood $V$ of ${\bf 0}$,
and conclude that the logarithm of the joint Laplace transform of $X'_1$, $X'_2$, $\dots$, $X'_d$:
\begin{eqnarray*}
\psi({\bf s}) & := & \ln\varphi({\bf s})
\end{eqnarray*}
satisfies the system of partial differential equations, written in matrix form as:
\begin{eqnarray*}
\nabla \psi({\bf s}) & = & M({\bf s}){\bf s},
\end{eqnarray*}
for all ${\bf s}$ in $V$, where:
\begin{eqnarray*}
M({\bf s}) & := & \left(I - B({\bf s})\right)^{-1},
\end{eqnarray*}
for:
\begin{eqnarray*}
B({\bf s}) & := & s_1A_1' + s_2A_2' + s_3A_3',
\end{eqnarray*}
with the coefficients $A_1', A_2', A_3'$ given by \eqref{E:Aprime}.
We have:
\[B({\bf s}) =
\begin{bmatrix}
as_3 & 0 & as_1 \\
0& as_3 & as_2 \\
as_1 & as_2 &  as_3\\
\end{bmatrix},\]
 and
\[
\begin{aligned}
& M({\bf s}) = \left(I - B({\bf s})\right)^{-1}  =\\
& \begin{bmatrix}
\frac{-a^2s_2^2+(1 - as_3)^2}{(1 - as_3)(-a^2s_1^2 - a^2s_2^2 + (1 - as_3)^2)} &
\frac{a^2s_1s_2}{(1 - as_3)(-a^2s_1^2 - a^2s_2^2+(1 - as_3)^2)} &
\frac{as_1}{-a^2s_1^2 - a^2s_2^2 + (1 - as_3)^2} \\
\frac{a^2s_1s_2}{(1 - as_3)(-a^2s_1^2 - a^2s_2^2 + (1 - as_3)^2)}   &
\frac{-a^2s_1^2 + (1 - as_3)^2}{(1 - as_3)(-a^2s_1^2 - a^2s_2^2 + (1 - as_3)^2)} &
\frac{as_2}{-a^2s_1^2 - a^2s_2^2 + (1 - as_3)^2}  \\
\frac{as_1}{-a^2s_1^2 - a^2s_2^2 + (1 - as_3)^2} &
\frac{as_2}{-a^2s_1^2 - a^2s_2^2 + (1 - as_3)^2} &
\frac{1 - as_3}{-a^2s_1^2 - a^2s_2^2 + (1 - as_3)^2}\\
\end{bmatrix}.
\end{aligned}
\]
We get
\[ M({\bf s})\vct{s} =
\begin{bmatrix}
\frac{s_1}{-a^2s_1^2 - a^2s_2^2 + (1 - as_3)^2}\\
\frac{s_2}{-a^2s_1^2 - a^2s_2^2 + (1 - as_3)^2}\\
\frac{as_1^2 + as_2^2 - as_3^2 + s_3}{-a^2s_1^2 - a^2s_2^2 + (1 - as_3)^2}\\
 \end{bmatrix},
\]
and from
$\nabla \psi(\vct{s}) = M({\bf s}) \vct{s}$ we obtain:
\[ \psi(\vct{s}) = -\frac{1}{2a^2} \ln\lp{-a^2s_1^2 - a^2s_2^2 + (1 - as_3)^2} - \frac{1}{a}s_3.\]
Since $\psi(\vct{s}) = \ln \vp(\vct{s})$, we conclude that the Laplace transform of $X'$ is:
\begin{eqnarray*}
\vp(\vct{s}) & = & e^{-(1/a)s_3} \cdot \left[-a^2s_1^2 - a^2s_2^2 + (1 - as_3)^2\right]^{-1/(2a^2)}, 
\end{eqnarray*}
and therefore the Lemma is proved.
\endproof

Our next step is to invert the Laplace transform \eqref{Laplace_transform_formula}, to obtain the joint probability distribution of $X_1'$, $X_2'$, and $X_3'$.\\
We introduce the following notations:
\begin{itemize}
	
\item We denote the Laplace transform by ${\mathcal L}$, and accordingly, its inverse by
${\mathcal L}^{-1}$.

\item For all $\vct{c} \in {\mathbb R}^3$, we denote by ${\mathcal E}_{\vct{c}}$, the exponential function:
\begin{eqnarray*}
{\mathcal E}_{\vct{c}}(\vct{x}) & := & e^{\vct{c} \cdot \vct{x}},
\end{eqnarray*}
for all $\vct{x} \in {\mathbb R}^3$.

\item For all $\vct{c} \in {\mathbb R}^3$, we denote by ${\mathcal T}_{\vct{c}}$, the translation operator that maps a function
$f$ into the function ${\mathcal T}_{\vct{c}}f$, defined by:
\begin{eqnarray*}
\left({\mathcal T}_{\vct{c}}f\right)(\vct{x}) & := & f(\vct{x} + \vct{c}),
\end{eqnarray*}
for all $\vct{x} \in {\mathbb R}^3$.

\item For all $c \in {\mathbb R} \setminus \{0\}$, we denote by ${\mathcal D}_c$, the dilation operator that maps a function
$f$ into the function ${\mathcal D}_cf$, defined by:
\begin{eqnarray*}
\left({\mathcal D}_cf\right)(\vct{x}) & := & f(c\vct{x}),
\end{eqnarray*}
for all $\vct{x} \in {\mathbb R}$.
\end{itemize}

We have the following properties:
\begin{itemize}

\item  For every function $f$ and $\vct{c} \in {\mathbb R}^3$,
\begin{eqnarray*}
{\mathcal L}\left({\mathcal T}_{\vct{c}}f\right) & = & {\mathcal E}_{-\vct{c}} \cdot \left({\mathcal L}f\right).
\end{eqnarray*}

\item  For every function $f$ and $\vct{c} \in {\mathbb R}^3$,
\begin{eqnarray*}
{\mathcal L}\left({\mathcal E}_{\vct{c}}f\right) & = & {\mathcal T}_{\vct{c}}\left({\mathcal L}f\right).
\end{eqnarray*}

\item For every function $f$ and $c \in {\mathbb R} \setminus \{0\}$,
\begin{eqnarray*}
{\mathcal L}\left({\mathcal D}_cf\right) & = & \frac{1}{|c|^3}{\mathcal D}_{1/c}\left({\mathcal L}f\right).
\end{eqnarray*}

\end{itemize}
Using these properties, it is not hard to see that:
\begin{eqnarray}
& \ & {\mathcal L}^{-1}\left[e^{-(1/a)s_3} \cdot \left[-a^2s_1^2 - a^2s_2^2 + (1 - as_3)^2\right]^{-1/(2a^2)}\right] \nonumber\\
& = & {\mathcal L}^{-1}\left\{{\mathcal D}_{a}
\left[e^{-(1/a^2)s_3} \cdot \left[-s_1^2 - s_2^2 + (1 - s_3)^2\right]^{-1/(2a^2)}\right]\right\} \nonumber\\
& = & \frac{1}{|a|^3}{\mathcal D}_{1/a}\left\{{\mathcal L}^{-1}\left[e^{-(1/a^2)s_3} \cdot \left[-s_1^2 - s_2^2 + (1 - s_3)^2\right]^{-1/(2a^2)}
\right]\right\} \nonumber\\
& = & \frac{1}{|a|^3}{\mathcal D}_{1/a}\left\{{\mathcal L}^{-1}\left[{\mathcal E}_{-(1/a^2)e_3} \cdot \left[-s_1^2 - s_2^2 + (1 - s_3)^2\right]^{-1/(2a^2)}
\right]\right\} \nonumber\\
& = & \frac{1}{|a|^3}{\mathcal D}_{1/a}\left\{{\mathcal T}_{(1/a^2)e_3}\left[{\mathcal L}^{-1}\left\{\left[-s_1^2 - s_2^2 + (s_3 - 1)^2\right]^{-1/(2a^2)}
\right\}\right]\right\} \nonumber\\
& = & \frac{1}{|a|^3}{\mathcal D}_{1/a}\left\{{\mathcal T}_{(1/a^2)e_3}\left[{\mathcal L}^{-1}
\left({\mathcal T}_{-e_3}\left\{\left(-s_1^2 - s_2^2 + s_3^2\right)^{-1/(2a^2)}\right\}\right)\right]\right\} \nonumber\\
& = & \frac{1}{|a|^3}{\mathcal D}_{1/a}\left\{{\mathcal T}_{(1/a^2)e_3}\left\{{\mathcal E}_{-e_3}
{\mathcal L}^{-1}\left[\left(-s_1^2 - s_2^2 + s_3^2\right)^{-1/(2a^2)}\right]\right\}\right\}. \label{inverse_Laplace_transform_interior}
\end{eqnarray}
It was shown in \cite{g75} (see also \cite{lw08} pages 6482--6483), that ${\mathcal L}^{-1}[(s_3^2 - s_1^2 - s_2^2)^{-1/(2a^2)}]$ is
a (positive) measure $\nu$ if and only if:
\begin{eqnarray*}
\frac{1}{2a^2} & \geq & \frac{3 - 2}{2},
\end{eqnarray*}
here the number $3$ from the right--hand side of the last inequality is the dimension $d$ of ${\mathbb R}^3$. The above inequality
is equivalent to:
\begin{eqnarray*}
|a| & \leq & 1.
\end{eqnarray*}
We will refer to a probability measure having the Laplace transform ${\mathcal L}({\bf s}) = (s_3^2 - s_1^2 - s_2^2)^{-1/(2a^2)}$,
as a {\em three-dimensional Gamma distribution}.\\
Moreover, for $|a| < 1$, the measure $\nu$ is absolutely continuous with respect to the Lebesgue measure $dx_1dx_2dx_3$ on ${\mathbb R}^3$.
Its Radon--Nikod\'ym derivative is:
\begin{eqnarray*}
g(x_1, x_2, x_3) & := & \frac{1}{2^{1/a^2 - 3/2}\Gamma_{\Omega}(1/(2a^2))}\left(x_3^2 - x_1^2 - x_2^2\right)^{1/(2a^2) - 3/2},
\end{eqnarray*}
where, for all $p > 1/2$
\begin{eqnarray*}
\Gamma_{\Omega}(p) & := & (2\pi)^{1/2}\Gamma(p)\Gamma\left(p - \frac{1}{2}\right),
\end{eqnarray*}
for all $\vct{x} = (x_1$, $x_2$, $x_3) \in \Omega$, where $\Omega$ is the open cone:
\begin{eqnarray*}
\Omega & := & \left\{\vct{x} \in {\mathbb R}^3 \mid x_3 > \sqrt{x_1^2 + x_2^2} \right\}.
\end{eqnarray*}
We have also included the computation of the Laplace transform of these functions in the Appendix.\\
Let us define $p := 1/(2a^2) > 1/2$. Then the joint probability distribution $\mu$ of $X_1'$, $X_2'$, $X_3'$ is absolutely
continuous with respect to the Lebesgue measure on ${\mathbb R}^3$, and its density function is
according to formula (\ref{inverse_Laplace_transform_interior}):
\begin{eqnarray}
f(x_1, x_2, x_3) & := & \frac{1}{|a|^3}{\mathcal D}_{1/a}\left\{{\mathcal T}_{(1/a^2)e_3}\left\{{\mathcal E}_{-e_3}
g(x_1, x_2, x_3)\right\}\right\} \nonumber\\
& = & \frac{1}{|a|^3}{\mathcal D}_{1/a}\left\{{\mathcal T}_{2pe_3}\left\{e^{-x_3}g(x_1, x_2, x_3)
\right\}\right\} \nonumber\\
& = & \frac{1}{|a|^3}{\mathcal D}_{1/a}\left\{e^{-(x_3 + 2p)}g(x_1, x_2, x_3 + 2p)\right\} \nonumber\\
& = & \frac{e^{-2p}}{|a|^3}e^{-x_3/a}g\left(\frac{x_1}{a}, \frac{x_2}{a}, \frac{x_3}{a} + 2p\right) \nonumber\\
& = & \frac{e^{-1/a^2}}{|a|^3}e^{-x_3/a}\frac{1}{2^{1/a^2 - 3/2}\Gamma_{\Omega}(1/(2a^2))} \nonumber\\
&     & \left[\left(\frac{x_3}{a} + \frac{1}{a^2}\right)^2 - \frac{x_1^2}{a^2} - \frac{x_2^2}{a^2}\right]^{1/(2a^2) - 3/2} \nonumber\\
& = & C_ae^{-(x_3/a + 1/a^2)}\left[\left(\frac{x_3}{a} + \frac{1}{a^2}\right)^2 - \frac{x_1^2}{a^2} - \frac{x_2^2}{a^2}\right]^{1/(2a^2) - 3/2},
\label{density_formula}
\end{eqnarray}
for all $\vct{x} = (x_1$, $x_2$, $x_3)$ inside a cone $\Omega_a$,
where $C_a$ is the positive constant:
\begin{eqnarray*}
C_a & := & \frac{1}{2^{1/a^2 - 3/2}|a|^3\Gamma_{\Omega}(1/(2a^2))}.
\end{eqnarray*}
The presence of the factor $e^{-x_3/a}$ in formula (\ref{density_formula}),
ensures the fact that $\mu$ has finite moments of all orders inside the (shifted) cone $\Omega_a$, where:
\begin{eqnarray*}
\Omega_a & := & \left\{(x_1, x_2, x_3) \in {\mathbb R}^3 ~\left|~
\left(\frac{x_3}{a} + \frac{1}{a^2}\right)^2 > \frac{x_1^2}{a^2} + \frac{x_2^2}{a^2}, \quad \frac{x_3}{a} + \frac{1}{a^2} > 0\right.\right\}.
\end{eqnarray*}
For $|a| = 1$, formula (\ref{Laplace_transform_formula}) becomes:
\begin{eqnarray*}
\varphi\left(\vct{s}\right) & = & \frac{e^{\pm s_3}}{\sqrt{(1 \pm s_3)^2 - s_1^2 - s_2^2}}.
\end{eqnarray*}
The case when $a = -1$ can be obtained from the case $a = 1$, by applying a dilation of factor $c = -1$, since:
\begin{eqnarray*}
\varphi_{-1}\left(\vct{s}\right) & = & \frac{e^{s_3}}{\sqrt{(1 + s_3)^2 - s_1^2 - s_2^2}} \nonumber\\
& = & \frac{e^{-(-s_3)}}{\sqrt{[1 - (-s_3)]^2 - (-s_1)^2 - (-s_2)^2}} \nonumber\\
& = & \varphi_{+1}\left(-\vct{s}\right).
\end{eqnarray*}
Thus, we have:
\begin{eqnarray*}
{\mathcal L}^{-1}\left(\varphi_{-1}\right) & = & {\mathcal L}^{-1}\left({\mathcal D}_{-1}\varphi_{+1}\right) \nonumber\\
& = & {\mathcal D}_{-1}{\mathcal L}^{-1}\left(\varphi_{+1}\right).
\end{eqnarray*}
Therefore, if we find the measure of $\mu_{+1}$, corresponding to $a = +1$, and its support is $D_{+1}$,
then the measure $\mu_{-1}$, corresponding to $a = -1$, will be supported by $D_{-1} := -D_{+1}$, and for all Borel subsets $B$
of $D_{-1}$, we have $\mu_{-1}(B) = \mu_{+1}(-B)$.\\
The probability measure $\mu_{+1}$, corresponding to $a = +1$, is supported on a cone (a two--dimensional manifold), and its construction
is described in the Appendix.\\
{\bf Case 2.} \ If $[A_1$, $A_2] = [A_2$, $A_3] = [A_3$, $A_1] = 0$. Since all the matrices $A_1$, $A_2$, and $A_3$
are symmetric and commute, they can be diagonalized in the same basis $\{f_1$, $f_2$, $f_3\}$ of ${\mathbb R^3}$.
Let $U : {\mathbb R}^3 \to {\mathbb R}^3$ be the orthogonal
 linear transformation that maps the standard basis
$\{e_1$, $e_2$, $e_3\}$ of ${\mathbb R}^3$ into $\{f_1$, $f_2$, $f_3\}$. We identify $U$ with its matrix
$\{u_{i, j}\}_{1 \leq i, j \leq d}$. We have $U^{-1} = U^T$ (the transpose of $U$), and
for all $r \in \{1$, $2$, $3\}$:
\begin{eqnarray*}
UA_rU^T & = & D_r,
\end{eqnarray*}
where $D_r$ is a $3 \times 3$ diagonal matrix.\\
Let us define the following random variables:
\begin{eqnarray*}
X_1' & := & \sum_{j = 1}^3u_{1, j}X_j,\\
X_2' & := & \sum_{j = 1}^3u_{2, j}X_j,\\
X_3' & := & \sum_{j = 1}^3u_{3, j}X_j.
\end{eqnarray*}
Then, for all $i \in \{1$, $2$, $3\}$, we have:
\begin{eqnarray*}
X_i & = & \sum_{j = 1}^3u_{i, j}^TX_j' \nonumber\\
& = & \sum_{j = 1}^3u_{j, i}X_j'.
\end{eqnarray*}
Since $(X_1$, $X_2$, $X_3)$ is a non--degenerate $1$-Meixner random vector, and $U$ is an invertible linear transformation,
$(X_1'$, $X_2'$, $X_3')$ is also a non-degenerate $1$-Meixner random vector. Moreover, the joint semi--annihilation
operators of $X_1'$, $X_2'$, and $X_3'$ are, for all $i \in \{1$, $2$, $3\}$:
\begin{eqnarray*}
U_i' & := & \sum_{j = 1}^3u_{i, j}U_j.
\end{eqnarray*}
For all $i$ and $j $ in $\{1$, $2$, $3\}$, such that $i \neq j$, we have:
\begin{eqnarray*}
\left[U_i', X_j'\right] & = & \left[\sum_{k = 1}^3u_{i, k}U_k, \sum_{l = 1}^3u_{j, l}X_l\right] \nonumber\\
& = & \sum_{k, l}u_{i, k}u_{j, l}\left[U_k, X_l\right] \nonumber\\
& = & \sum_{k, l}u_{i, k}u_{j, l}\left(\sum_r\alpha_{k,l,r}X_r + \delta_{i, j}I\right) \nonumber\\
& = & \sum_{k, l, r}u_{i, k}\alpha_{r, k, l}u_{l, j}^{T}\sum_su_{s, r}X_s' \nonumber\\
& = & \sum_{r, s}u_{s, r}\left(\sum_{k, l}u_{i, k}\alpha_{r, k, l}u_{l, j}^T\right)X_s' \nonumber\\
& = & \sum_{r, s}u_{s, r}\left(UA_rU^T\right)_{i, j}X_s' \nonumber\\
& = & \sum_{r, s}u_{s, r}\left(D_r\right)_{i, j}X_s' \nonumber\\
& = & 0,
\end{eqnarray*}
since $D_r$ is a diagonal matrix and $i \neq j$.\\
Since for all $i \neq j$, we have $[U_i'$, $X'_j] = 0$, it follows from Theorem 4.6 from \cite{ps19}, that
the joint probability distribution of $(X_1'$, $X_2'$, $X_3')$ is polynomially factorisable. That means,
for all $i$, $j$, and $k \in {\mathbb N} \cup \{0\}$, we have:
\begin{eqnarray*}
E\left[X_1'^iX_2'^jX_3'^k\right] & = & E\left[X_1'^i\right]E\left[X_2'^j\right]E\left[X_3'^k\right].
\end{eqnarray*}
That means, from the point of view of moments $X_1'$, $X_2'$, and $X_3'$ behave like three independent random variables.
Since the new coefficients $\alpha_{i,j,k}' = 0$, for all $i \neq j$, permuting the indexes, we obtain, that,
for all $k \in \{1$, $2$, $3\}$, we have:
\begin{eqnarray*}
\alpha_{k,i,j}' & = & 0,
\end{eqnarray*}
for all $i \neq j$. Thus, if we choose $i := k$ and $j \neq k$, we have:
\begin{eqnarray*}
\alpha_{k, k, j}' & = & 0.
\end{eqnarray*}
That means, for all $k \in \{1$, $2$, $3\}$, we have:
\begin{eqnarray*}
\left[U_k', X_k'\right] & = & \sum_{j = 1}^3\alpha_{k, k, j}'X_j' + c_k'I \nonumber\\
& = & \alpha_{k, k, k}'X_k' + c_k'I.
\end{eqnarray*}
The last equation shows that individually each random variable $X_1'$, $X_2'$, and $X_3'$ is a $1$-Meixner random
variable. It was shown in \cite{ps14}, that the $1$-Meixner random variables are up to a re-scaling and translation
Gamma or Gaussian random variables. Since the joint moments of $X_1$, $X_2$, and $X_3$ can be written as products
of the corresponding individual moments of $X_1$, $X_2$, and $X_3$, and because the moment problem for $1$-Meixner
random vectors is uniquely solvable (due to the estimates that we obtained in
Lemma \ref{moment_estimate_lemma}), we conclude that the joint probability distribution $\mu$ of $X_1'$, $X_2'$, and $X_3'$
is the product of the individual probability distributions $\mu_1$, $\mu_2$, and $\mu_3$ of $X_1'$, $X_2'$, and
$X_3'$, respectively. Thus $X_1'$, $X_2'$, and $X_3'$ are three independent re-scaled and shifted Gamma or Gaussian
random variables.\\
We conclude our discussion with the following theorem:
\begin{theorem}
	\label{T:main}
A three--dimensional random vector $(X_1$, $X_2$, $X_3)$, having finite joint moments of all orders, is a non--degenerate $1$--Meixner
random vector if and only if there exists an invertible affine transformation from
${\mathbb R}^3$ to ${\mathbb R}^3$, denoted
$$ (X_1, X_2, X_3) \to (X_1', X_2', X_3'),$$
such that either:
\begin{itemize}

\item $X_1'$, $X_2'$, and $X_3'$ are independent Gamma or Gaussian random variables

\end{itemize}

or

\begin{itemize}

\item the joint probability distribution of $(X_1'$, $X_2'$, $X_3')$ is a three--dimensional Gamma distribution.

\end{itemize}

\end{theorem}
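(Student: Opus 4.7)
The plan is to prove both implications by collating the machinery developed in Sections 4--7 and splitting on the commutativity of the matrices $A_1,A_2,A_3$. For the forward direction, I would first apply an invertible affine transformation to reduce to the normalized case where the $X_i$ are centered and orthonormal in $L^2$, as in Section 4. This makes $\beta_{i,j} = \delta_{i,j}$ and lets us encode the commutators $[U_i,X_j]$ through the three symmetric matrices $A_i = (\alpha_{i,r,s})_{r,s}$. The necessary condition from Section~\ref{S:nci} then forces $[A_i,A_j]\mathbf{t}\cdot(I - t_1A_1 - t_2A_2 - t_3A_3)^{-1}\mathbf{t} = 0$ on a neighborhood of $\mathbf{0}$, and the dichotomy of the theorem reflects the dichotomy between $[A_i,A_j]$ being identically zero or not.

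In Case 1 (some commutator nonzero), I would invoke Lemma~\ref{L:Fcanonical} to produce an orthogonal matrix $U$ such that the cubic form $F_{UX}(\mathbf{s})$ becomes $3as_3(s_1^2+s_2^2)+as_3^3$ with $a\neq 0$; crucially, the argument there already forces the three matrices into the rigid canonical form \eqref{E:Aprime}, so after this orthogonal change of variables we know the full system of PDEs explicitly. Lemma~\ref{L:phi_canonical} then integrates the system and yields the closed-form Laplace transform \eqref{Laplace_transform_formula}. Inverting this Laplace transform (using the translation/dilation/exponential operator calculus and the result of \cite{g75} on Laplace transforms of powers of $s_3^2 - s_1^2 - s_2^2$) identifies the joint law with a shifted and rescaled three-dimensional Gamma distribution, valid for $|a|\le 1$; this is precisely the second alternative in the statement.

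In Case 2 (all commutators vanish), the matrices $A_1,A_2,A_3$ are symmetric and pairwise commuting, hence simultaneously diagonalizable by some orthogonal $U$. After the affine change of variables $X_i' := \sum_j u_{i,j}X_j$, a direct calculation (carried out in the text preceding the theorem) gives $[U_i',X_j'] = 0$ for $i\neq j$. Theorem~4.6 of \cite{ps19} then implies that the joint law of $(X_1',X_2',X_3')$ is polynomially factorisable; combined with the moment-uniqueness supplied by Lemma~\ref{moment_estimate_lemma}, the measure itself factors as a product, so the $X_i'$ are independent. Each marginal is a one-dimensional $1$-Meixner variable, hence Gamma or Gaussian by \cite{ps14}, giving the first alternative.

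For the converse, I would verify directly that both families are non-degenerate $1$-Meixner vectors: for independent Gamma/Gaussian components this follows from the one-dimensional theory and the fact that $[U_i,X_j]=0$ whenever $i\neq j$ in an independent product; for the three-dimensional Gamma distribution, plugging \eqref{Laplace_transform_formula} into the system of Lemma~\ref{L:syst} with $A_1',A_2',A_3'$ as in \eqref{E:Aprime} shows that the PDEs are satisfied, and the Laplace transform uniquely determines the commutator coefficients. The main obstacle I anticipate is the endpoint case $|a|=1$, where the three-dimensional Gamma measure is supported on the boundary cone rather than being absolutely continuous; here the inversion must be handled as a limiting case, and the construction of the measure (and verification that it still has all finite moments) is deferred to the Appendix. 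A secondary subtle point is ensuring that the commutator relation $C'_{2,3}\mathbf{t}\cdot A_2'\mathbf{t}=0$ in Lemma~\ref{L:Fcanonical} forces $b=a$ (rather than merely constraining it), which is what collapses all three commutators simultaneously and produces the rigid canonical form used downstream.
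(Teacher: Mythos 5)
Your proposal is correct and follows essentially the same route as the paper: normalize via an affine map, split on whether some commutator $[A_i,A_j]$ vanishes, use Lemma~\ref{L:Fcanonical} and Lemma~\ref{L:phi_canonical} plus Laplace inversion (with the $|a|=1$ boundary case deferred to the Appendix) in the noncommuting case, and simultaneous diagonalization together with Theorem~4.6 of \cite{ps19}, the moment estimates, and the one-dimensional classification of \cite{ps14} in the commuting case. The subtle points you flag (the rigidity $b=a$ forcing the canonical form, and the singular measure at $|a|=1$) are exactly the ones the paper addresses.
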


\section{Appendix}
\label{S:app}

\begin{proposition}
\label{P:Ltinv}
Let $\mu$ be the measure on ${\mathbb R}^3$, that is absolutely continuous with respect
to the Lebesgue measure $dx_1dx_2dx_3$ on ${\mathbb R}^3$, and whose Radon-Nikod\'ym derivative is:
\begin{eqnarray*}
\frac{d\mu(x)}{dxdydz} & = & \left(x_3^2 - x_1^2 - x_2^2\right)^p1_\Omega(x_1, x_2, x_3),
\end{eqnarray*}
where:
\begin{eqnarray*}
\Omega & := & \left\{(x_1, x_2, x_3) \in {\mathbb R}^3 \mid x_1^2 + x_2^2 < x_3^2,
\ x_3 > 0 \right\},
\end{eqnarray*}
is the open upper cone with vertex at the origin, axis of symmetry the positive $x_3$-axis, whose generator
and axis of symmetry make an angle $\varphi$, of measure $m(\varphi) = 45^{\circ}$,
and $1_\Omega$ denotes the
characteristic function of $\Omega$.
Then, for $p > -1$,  the Laplace transform of $\mu$ (or equivalently, of its Radon-Nikod\'ym derivative
with respect to the Lebesgue measure) is:
\begin{eqnarray*}
E_{\mu}\left[\exp\left(\vct{t} \cdot X\right)\right] & = & 2\pi\Gamma(2p + 2)\left(t_3^2 - t_1^2 - t_2^2\right)^{-p - (3/2)}.
\end{eqnarray*}
\end{proposition}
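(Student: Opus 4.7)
\medskip

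\noindent\textbf{Proof plan for Proposition~\ref{P:Ltinv}.}

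The plan is to compute
\[
I(\vct{t}) \;=\; \int_{\Omega} e^{\,t_1 x_1 + t_2 x_2 + t_3 x_3}\,\bigl(x_3^2 - x_1^2 - x_2^2\bigr)^{p}\,dx_1\,dx_2\,dx_3
\]
by exploiting the two kinds of symmetries that preserve both the cone $\Omega$ and the Lorentz quadratic form $x_3^2 - x_1^2 - x_2^2$: (i) Euclidean rotations in the $(x_1,x_2)$-plane, and (ii) hyperbolic rotations (Lorentz boosts) mixing one spatial coordinate with $x_3$. The integral is absolutely convergent for $\vct{t}$ in the open dual cone $t_3 < -\sqrt{t_1^2 + t_2^2}$, which is where I will establish the identity; the general formula follows by analyticity in $\vct{t}$.

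First, I would apply an $SO(2)$-rotation in $(x_1,x_2)$ to reduce to the case $t_2 = 0$, which preserves both $\Omega$, the integrand, and Lebesgue measure. Next, I would apply the Lorentz boost
\[
(x_1,x_3) \mapsto (x_1\cosh\theta + x_3\sinh\theta,\; x_1\sinh\theta + x_3\cosh\theta),
\]
which has Jacobian $1$, leaves the quadratic form $x_3^2-x_1^2$ invariant, and maps $\Omega$ to itself. Under this substitution the linear form $t_1 x_1 + t_3 x_3$ is transformed into another linear form $t_1' x_1 + t_3' x_3$ satisfying $t_3'^2 - t_1'^2 = t_3^2 - t_1^2$, and I choose $\theta$ to arrange $t_1' = 0$ and $t_3' = -\alpha$ with $\alpha := \sqrt{t_3^2 - t_1^2 - t_2^2} > 0$ (this is where the assumption $t_3 < -\sqrt{t_1^2+t_2^2}$ fixes the sign).

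It remains to evaluate
\[
I(\vct{t}) \;=\; \int_{\Omega} e^{-\alpha x_3}\,\bigl(x_3^2 - x_1^2 - x_2^2\bigr)^{p}\,dx_1\,dx_2\,dx_3.
\]
Passing to cylindrical coordinates $x_1 = r\cos\phi$, $x_2 = r\sin\phi$, the cone becomes $\{0 \le r < x_3,\ x_3 > 0\}$, the angular integral contributes a factor $2\pi$, and the radial integral $\int_0^{x_3}(x_3^2 - r^2)^{p} r\,dr$ is evaluated by the substitution $u = x_3^2 - r^2$ to give $x_3^{2p+2}/\bigl(2(p+1)\bigr)$ (which requires $p>-1$). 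What remains is
\[
I(\vct{t}) \;=\; \frac{\pi}{p+1}\int_0^{\infty} x_3^{2p+2}\,e^{-\alpha x_3}\,dx_3 \;=\; \frac{\pi\,\Gamma(2p+3)}{(p+1)\,\alpha^{2p+3}},
\]
and applying $\Gamma(2p+3) = 2(p+1)\Gamma(2p+2)$ together with $\alpha^{2p+3} = (t_3^2 - t_1^2 - t_2^2)^{p+3/2}$ produces the claimed formula $2\pi\,\Gamma(2p+2)\,(t_3^2-t_1^2-t_2^2)^{-p-3/2}$.

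The main technical point to verify carefully is the Lorentz-boost step: one has to check that $\Omega$ is invariant under the boost (it is the forward light cone of the Minkowski form with signature $(-,-,+)$), that Lebesgue measure is preserved (a direct Jacobian computation), and that the parameter $\alpha$ produced in the reduced problem has the correct positive sign so that the $dx_3$-integral converges. Once this linear-algebra bookkeeping is clean, the rest is the routine beta-gamma computation sketched above.
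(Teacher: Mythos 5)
Your argument is correct, but it is a genuinely different route from the one in the paper. You exploit the invariance group of the problem: a rotation in the $(x_1,x_2)$-plane to set $t_2=0$, then a unit-Jacobian Lorentz boost preserving $x_3^2-x_1^2-x_2^2$ and the forward cone $\Omega$ to move $\vct{t}$ to $(0,0,-\alpha)$ with $\alpha=\sqrt{t_3^2-t_1^2-t_2^2}$, after which cylindrical coordinates reduce everything to the elementary computations $\int_0^{x_3}(x_3^2-r^2)^p r\,dr = x_3^{2p+2}/(2(p+1))$ (this is where $p>-1$ enters) and $\int_0^\infty x_3^{2p+2}e^{-\alpha x_3}\,dx_3=\Gamma(2p+3)\alpha^{-2p-3}$, with $\Gamma(2p+3)=2(p+1)\Gamma(2p+2)$ giving exactly the stated constant. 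The paper instead works in spherical coordinates for a general $\vct{t}$ with $t_3<0$: it expands the angular integral into the series $\sum_n \frac{(t_1^2+t_2^2)^n r^{2n}\sin^{2n}\varphi}{2^{2n}(n!)^2}2\pi$ via Wallis integrals, integrates term by term in $r$ to produce $\Gamma(2n+2p+3)$, reduces the $\varphi$-integral to a Beta function by three successive substitutions, and then resums using the Legendre duplication formula and the binomial series (which is where the condition $t_1^2+t_2^2<t_3^2$ is used). Your symmetry reduction buys a much shorter and more conceptual proof with no series manipulations, at the cost of the linear-algebra bookkeeping you flag (boost invariance of $\Omega$, Jacobian $1$, and the sign of $\alpha$), all of which check out since $\vct{t}$ lies in $-\Omega$; the paper's computation is longer but entirely explicit and self-contained. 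One small remark: your closing appeal to ``analyticity in $\vct{t}$'' is unnecessary (and slightly misleading), since the Laplace transform only converges on the open dual cone $t_3<-\sqrt{t_1^2+t_2^2}$, and your boost argument already establishes the identity at every point of that cone, which is the same domain the paper's proof covers.
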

\proof
Let $\vct{t} = (t_1$, $t_2$, $t_3) \in {\mathbb R}^3$. We have:
\begin{eqnarray*}
\varphi\left(\vct{t}\right) & = & \int_De^{t_1x_1}e^{t_2x_2}e^{t_3x_3}\left(x_3^2 - x_1^2 - x_2^2\right)^p
dx_1dx_2dx_3.
\end{eqnarray*}
Let us move to spherical coordinates:
\begin{eqnarray*}
\left\{
\begin{array}{rrr}
x_1 & = & r\sin(\varphi)\cos(\theta)\\
x_2 & = & r\sin(\varphi)\sin(\theta)\\
x_3 & = & r\cos(\varphi),
\end{array}
\right.
\end{eqnarray*}
where $r \in (0$, $\infty)$, $\theta \in [0$, $2\pi)$, $\varphi \in [0$, $\pi/4)$.
The absolute value of the Jacobian is:
\begin{eqnarray*}
|J(r, \theta, \varphi)| & = & r^2\sin(\varphi).
\end{eqnarray*}
Thus, we have:
\begin{eqnarray*}
\varphi\left(\vct{t}\right) & = & \int_0^{\pi/4}   \sin(\varphi)\left[\int_0^{\infty}e^{t_3r\cos(\varphi)}\left(r^2\cos(2\varphi)\right)^pr^2 \right. \\
 &  &
\left. \left[\int_0^{2\pi}e^{r\sin(\varphi)(t_1\cos(\theta) + t_2\sin(\theta))}d\theta\right]dr\right]d\varphi.\\
\end{eqnarray*}
The order of integration that we choose is: first with respect to $\theta$, second with respect to $r$, and third
we respect to $\varphi$.\\
Let us compute first the innermost integral, with respect to $\theta$, for $r$ and $\varphi$ fixed. In the exponent
of that integrand we write the superposition of waves as only one wave, namely:
\begin{eqnarray*}
t_1\cos(\theta) + t_2\sin(\theta) & = & \sqrt{t_1^2 + t_2^2}\left[\cos(\theta)\frac{t_1}{\sqrt{t_1^2 + t_2^2}}
+ \sin(\theta)\frac{t_2}{\sqrt{t_1^2 + t_2^2}} \right] \nonumber\\
& = & \sqrt{t_1^2 + t_2^2}\left[\cos(\theta)\cos(\tau) + \sin(\theta)\sin(\tau)\right] \nonumber\\
& = & \sqrt{t_1^2 + t_2^2}\cos(\theta - \tau),
\end{eqnarray*}
where $\tau \in [0$, $2\pi)$ is the only angle such that:
\begin{eqnarray*}
\left\{
\begin{array}{ccc}
\cos(\tau) & = & \frac{t_1}{\sqrt{t_1^2 + t_2^2}}\\
\sin(\tau) & = & \frac{t_2}{\sqrt{t_1^2 + t_2^2}}
\end{array}
\right..
\end{eqnarray*}
Therefore, the most inner integral becomes:
\begin{eqnarray*}
I_1 & := & \int_0^{2\pi}e^{r\sin(\varphi)\sqrt{t_1^2 + t_2^2}\cos(\theta - \tau)}d\theta.
\end{eqnarray*}
Since the integrand is a periodic function of period $2\pi$, we can integrate on any interval of
length $2\pi$. So, we can integrate on the interval $[\tau$, $\tau + 2\pi)$, obtaining after
the change of variable $\theta \mapsto \theta + \tau$:
\begin{eqnarray*}
I_1 & = & \int_0^{2\pi}e^{r\sin(\varphi)\sqrt{t_1^2 + t_2^2}\cos(\theta)}d\theta.
\end{eqnarray*}
We use now the Taylor series expansion of the exponential function:
\begin{eqnarray*}
e^z & = & \sum_{n = 0}^{\infty}\frac{z^n}{n!}.
\end{eqnarray*}
Since this is an entire series (radius of convergence $R = \infty$), according to Weierstrass Theorem,
it converges uniformly on any compact $K$. Thus, we can interchange the series and integral, obtaining:
\begin{eqnarray*}
I_1 & = & \int_0^{2\pi}e^{r\sin(\varphi)\sqrt{t_1^2 + t_2^2}\cos(\theta)}d\theta \nonumber\\
& = & \int_0^{2\pi}\sum_{n = 0}^{\infty}\frac{\left(r\sin(\varphi)\sqrt{t_1^2 + t_2^2}\cos(\theta)\right)^n}{n!}d\theta \nonumber\\
& = & \sum_{n = 0}^{\infty}\frac{\left(r\sin(\varphi)\sqrt{t_1^2 + t_2^2}\right)^n}{n!}\int_0^{2\pi}\cos^n(\theta)d\theta.
\end{eqnarray*}
For all $n \geq 0$, let us define:
\begin{eqnarray*}
J_n & := & \int_0^{2\pi}\cos^n(\theta)d\theta.
\end{eqnarray*}
Then $J_0 = 2\pi$, $J_1 = 0$, and for all $n \geq 2$, integrating by parts, we obtain:
\begin{eqnarray*}
J_n & = & \int_0^{2\pi} \cos^{n - 1}(\theta)\cos(\theta)d\theta \nonumber\\
& = & \int_0^{2\pi} \cos^{n - 1}(\theta)d\left(\sin(\theta)\right) \nonumber\\
& = & \cos^{n - 1}(\theta)\sin(\theta)|_0^{2\pi} + (n - 1)\int_0^{2\pi}\cos^{n - 2}(\theta)\sin^2(\theta)d\theta \nonumber\\
& = & 0 + (n - 1)\int_0^{2\pi}\cos^{n - 2}(\theta)\left[1 - \cos^2(\theta)\right]d\theta \nonumber\\
& = & (n - 1)J_{n - 2} - (n - 1)J_n.
\end{eqnarray*}
We obtain from here, that:
\begin{eqnarray*}
J_n & = & \frac{n - 1}{n}J_{n - 2},
\end{eqnarray*}
for all $n \geq 1$. Iterating this recursive relation, we get:
\begin{eqnarray*}
J_{2n + 1} & = & 0
\end{eqnarray*}
and
\begin{eqnarray*}
J_{2n} & = & \frac{(2n - 1)!!}{(2n)!!} 2\pi\nonumber\\
& = & \frac{(2n - 1)!!}{2^nn!}2\pi.
\end{eqnarray*}
Thus, we obtain:
\begin{eqnarray*}
\int_0^{2\pi}e^{r\sqrt{t_1^2 + t_2^2}\cos(\theta)}d\theta & = & \sum_{n = 0}^{\infty}
\frac{r^{2n}(t_1^2 + t_2^2)^n\sin^{2n}(\varphi)}{(2n)!} \cdot \frac{(2n - 1)!!}{2^nn!} 2\pi
\nonumber\\
& = & \sum_{n = 0}^{\infty}
\frac{r^{2n}(t_1^2 + t_2^2)^n\sin^{2n}(\varphi)}{2^{2n}(n!)^2}2\pi.
\end{eqnarray*}
We compute now the second integral, with respect to $r$, for a fixed $\varphi$.
\begin{eqnarray*}
I_2 & := & \sum_{n = 0}^{\infty}\frac{(t_1^2 + t_2^2)^n\sin^{2n}(\varphi)}{2^{2n}(n!)^2}2\pi
\int_0^{\infty}r^{2n + 2p + 2}e^{t_3r\cos(\varphi)}dr.
\end{eqnarray*}
Here, for convergence, we must assume that $t_3 < 0$,
and make the change of variable $s = r|t_3|\cos(\varphi)$,
$dr = \frac{1}{|t_3|\cos(\varphi)}ds$. We obtain:
\begin{eqnarray*}
I_2 & = & \sum_{n = 0}^{\infty}\frac{(t_1^2 + t_2^2)^n\sin^{2n}(\varphi)}{2^{2n}(n!)^2}2\pi
\int_0^{\infty}s^{2n + 2p + 2}e^{-s}\frac{1}{|t_3|^{2n + 2p + 3}\cos^{2n + 2p + 3}(\varphi)}ds \nonumber\\
& = & \sum_{n = 0}^{\infty}\frac{(t_1^2 + t_2^2)^n\sin^{2n}(\varphi)}{2^{2n}(n!)^2|t_3|^{2n + 2p + 3}
\cos^{2n + 2p + 3}(\varphi)}2\pi\Gamma(2n + 2p + 3).
\end{eqnarray*}
Finally, we compute the last integral, with respect to $\varphi$, which is the Laplace transform of $\mu$.
We have:
\begin{eqnarray*}
\varphi\left(\vct{t}\right) & = & \sum_{n = 0}^{\infty}\frac{(t_1^2 + t_2^2)^n}{2^{2n}(n!)^2|t_3|^{2n + 2p + 3}}
2\pi\Gamma(2n + 2p + 3)\\
 & & \int_0^{\pi/4}\frac{\sin^{2n + 1}(\varphi)}{\cos^{2n + 2p + 3}(\varphi)}\cos^p(2\varphi)
d\varphi \nonumber\\
& = & \sum_{n = 0}^{\infty}\frac{(t_1^2 + t_2^2)^n}{2^{2n}(n!)^2|t_3|^{2n + 2p + 3}}
2\pi\Gamma(2n + 2p + 3)\\
& & \int_0^{\pi/4}\frac{\sin^{2n}(\varphi)}{\cos^{2n + 2p + 3}(\varphi)}\cos^p(2\varphi)
\sin(\varphi)d\varphi.
\end{eqnarray*}
We make now the change of variable $u = \cos(\varphi)$, $du = -\sin(\varphi)d\varphi$. We obtain:
\begin{eqnarray*}
\varphi\left(\vct{t}\right) & = & \sum_{n = 0}^{\infty}\frac{(t_1^2 + t_2^2)^n}{2^{2n}(n!)^2|t_3|^{2n + 2p + 3}}
2\pi\Gamma(2n + 2p + 3)\int_{\sqrt{2}/2}^{1}\frac{(1 - u^2)^n}{u^{2n + 2p + 3}}\left(2u^2 - 1\right)^pdu.
\end{eqnarray*}
We make now the change of variable $v = u^2$, $dv = 2udu$. We obtain:
\begin{eqnarray*}
\varphi\left(\vct{t}\right) & = & \frac{1}{2}\sum_{n = 0}^{\infty}\frac{(t_1^2 + t_2^2)^n}{2^{2n}(n!)^2|t_3|^{2n + 2p + 3}}
2\pi\Gamma(2n + 2p + 3)\int_{1/2}^{1}\frac{(1 - v)^n(2v - 1)^p}{v^{n + p + 2}}dv \nonumber\\
& = & \frac{1}{2}\sum_{n = 0}^{\infty}\frac{(t_1^2 + t_2^2)^n}{2^{2n}(n!)^2|t_3|^{2n + 2p + 3}}
2\pi\Gamma(2n + 2p + 3)\int_{1/2}^{1}\left(\frac{1 - v}{v}\right)^n\left(\frac{2v - 1}{v}\right)^p\frac{1}{v^2}dv.
\end{eqnarray*}
Finally, we make the change of variable $w = (1 - v)/v$, $dw = -(1/v^2)dv$. We obtain:
\begin{eqnarray*}
\varphi\left(\vct{t}\right)
& = & \frac{1}{2}\sum_{n = 0}^{\infty}\frac{(t_1^2 + t_2^2)^n}{2^{2n}(n!)^2|t_3|^{2n + 2p + 3}}
2\pi\Gamma(2n + 2p + 3)\int_0^1w^n(1 - w)^pdw \nonumber\\
& = & \frac{1}{2}\sum_{n = 0}^{\infty}\frac{(t_1^2 + t_2^2)^n}{2^{2n}(n!)^2|t_3|^{2n + 2p + 3}}
2\pi\Gamma(2n + 2p + 3)B(n + 1, p + 1),
\end{eqnarray*}
where $B$ is the Euler beta function. Applying the formula:
\begin{eqnarray*}
B(n + 1, p + 1) & = & \frac{\Gamma(n + 1)\Gamma(p + 1)}{\Gamma(n + p + 2)},
\end{eqnarray*}
we have:
\begin{eqnarray*}
\varphi\left(\vct{t}\right) & = & \frac{1}{2}\sum_{n = 0}^{\infty}\frac{(t_1^2 + t_2^2)^n}{2^{2n}(n!)^2|t_3|^{2n + 2p + 3}}
2\pi\Gamma(2n + 2p + 3)\frac{\Gamma(n + 1)\Gamma(p + 1)}{\Gamma(n + p + 2)} \nonumber\\
& = & \frac{1}{2}\sum_{n = 0}^{\infty}\frac{(t_1^2 + t_2^2)^n}{2^{2n}(n!)^2|t_3|^{2n + 2p + 3}}
2\pi\Gamma\left(2\left(n + p + \frac{3}{2}\right)\right)
\frac{n!\Gamma(p + 1)}{\Gamma(n + p + 2)}.
\end{eqnarray*}
Using now Legendre duplication formula for the Gamma function:
\begin{eqnarray*}
\Gamma(2z) & = & \frac{2^{2z - 1}\Gamma(z)\Gamma(z + (1/2))}{\sqrt{\pi}},
\end{eqnarray*}
we obtain:
\begin{eqnarray*}
\varphi\left(\vct{t}\right) & & \\
& = &  \pi\sum_{n = 0}^{\infty}\frac{(t_1^2 + t_2^2)^n}{2^{2n}n!|t_3|^{2n + 2p + 3}}\frac{2^{2n + 2p + 2}\Gamma(n + p + (3/2))
\Gamma(n + p + 2)}{\sqrt{\pi}}  \frac{\Gamma(p + 1)}{\Gamma(n + p + 2)} \nonumber\\
\end{eqnarray*}
\begin{eqnarray*}
& = &\pi \frac{2^{2p + 2}}{|t_3|^{2p + 3}}\sum_{n = 0}^{\infty}\frac{(t_1^2 + t_2^2)^n}{n!|t_3|^{2n}}\frac{\Gamma(n + p + (3/2))}{\sqrt{\pi}}
\Gamma(p + 1) \nonumber\\
\end{eqnarray*}
\begin{eqnarray*}
& = & \pi\frac{2^{2p + 2}}{|t_3|^{2p + 3}}\sum_{n = 0}^{\infty}\frac{(t_1^2 + t_2^2)^n}{n!|t_3|^{2n}}\\
 & & \frac{(n + p + (1/2))(n - 1 + p + (1/2)) \cdots
(1 + p + (1/2))\Gamma(1 + p + (1/2))}{\sqrt{\pi}} \nonumber\\
& \ & \Gamma(p + 1) \nonumber\\
\end{eqnarray*}
\begin{eqnarray*}
& = & \pi\frac{2^{2p + 2}}{|t_3|^{2p + 3}}\\
& & \sum_{n = 0}^{\infty}(-1)^n\frac{(-p - (3/2))(-p - (3/2) - 1) \cdots (-p - (3/2) - (n - 1))}{n!} \\
& & \frac{(t_1^2 + t_2^2)^n}{|t_3|^{2n}}\nonumber
\frac{\Gamma(p + 1)\Gamma(p + 1 + (1/2))}{\sqrt{\pi}}.
\end{eqnarray*}
Using now the binomial formula:
\begin{eqnarray*}
(1 + x)^r & = & \sum_{n = 0}^{\infty}\frac{r(r - 1) \cdots (r - n + 1)}{n!}x^n,
\end{eqnarray*}
for $|x| < 1$, and the duplication formula again, we obtain:
\begin{eqnarray*}
\varphi\left(\vct{t}\right)
& = &\pi \frac{2^{2p + 2}}{|t_3|^{2p + 3}}\left[1 - \frac{t_1^2 + t_2^2}{t_3^2}\right]^{-p - (3/2)}\frac{\Gamma(2p + 2)}{2^{2p + 1}} \nonumber\\
& = & 2\pi\Gamma(2p + 2)\left(t_3^2 - t_1^2 - t_2^2\right)^{-p - (3/2)}.
\end{eqnarray*}
\endproof
The case $a = 1$ corresponds to $p=-1$ which is not covered by Proposition~\ref{P:Ltinv} above.
However, in this situation we have the simpler
 \begin{proposition}
 \label{P:Ltcyl}
 Let $\nu\lp{\vct{x}}$ be the two dimensional measure on $\partial \Omega$
  (the surface of the cone with equation
 $\sqrt{x_1^2+x_2^2} = x_3$) obtained as the push--forward of the surface measure on the
 cylinder given by parametrization
 \[ (r, \theta) \to (\cos(\theta), \sin(\theta), r),  \ r> 0, \ \theta \in [0, 2\pi), \]
 via the map
   \[ (\cos(\theta), \sin(\theta), r) \to ( r\cos(\theta), r\sin(\theta), r). \]
   Then, for  $\vct{t}\in -\Omega$ the Laplace transform
   \[ \cL \nu \lp{\vct{t}} = \int_{\partial \Omega} e^{\vct{t}\cdot \vct{x}} \,d\nu\!\lp{\vct{x}}, \]
   is given by
   \[ \cL \nu \lp{\vct{t}} =    \frac{2\pi}{\sqrt{t_3^2-\lp{t_1^2 + t_2^2}}}. \]
  \end{proposition}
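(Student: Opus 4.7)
The plan is to unfold the pushforward definition, evaluate the resulting double integral by doing the radial integration first, and then reduce the angular integral to a classical one.

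By definition of the pushforward, for every measurable $f$ we have
$\int_{\partial \Omega} f\, d\nu = \int_0^{\infty}\!\int_0^{2\pi} f\bigl( r\cos(\theta), r\sin(\theta), r \bigr)\, d\theta\, dr$.
Applying this to $f(\vct{x}) = e^{\vct{t}\cdot\vct{x}}$ gives
\begin{equation*}
\cL\nu(\vct{t}) \;=\; \int_0^{\infty}\!\int_0^{2\pi}
 \exp\!\bigl[ r\bigl( t_1\cos(\theta) + t_2\sin(\theta) + t_3\bigr) \bigr]\, d\theta\, dr.
\end{equation*}
Since the integrand is nonnegative, I may freely interchange the order of integration by Tonelli's theorem.

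The next step is the $r$--integration, performed for each fixed $\theta$. Writing $t_1\cos(\theta) + t_2\sin(\theta) = \sqrt{t_1^2 + t_2^2}\,\cos(\theta - \tau)$ for a suitable phase $\tau$, the coefficient of $r$ inside the exponential is bounded above by $t_3 + \sqrt{t_1^2+t_2^2}$. The hypothesis $\vct{t}\in -\Omega$ means precisely that $t_3 < 0$ and $\sqrt{t_1^2+t_2^2} < -t_3$, so this coefficient is strictly negative for every $\theta$, and the $r$--integral converges to $\bigl(-t_3 - t_1\cos\theta - t_2\sin\theta\bigr)^{-1}$. Using the harmonic-addition identity above together with the $2\pi$--periodicity of the integrand, shifting $\theta \mapsto \theta + \tau$ leaves us with
\begin{equation*}
\cL\nu(\vct{t}) \;=\; \int_0^{2\pi} \frac{d\theta}{-t_3 - \sqrt{t_1^2+t_2^2}\,\cos(\theta)}.
\end{equation*}

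Finally, I invoke the standard evaluation $\int_0^{2\pi}\frac{d\theta}{a - b\cos(\theta)} = \frac{2\pi}{\sqrt{a^2 - b^2}}$, valid whenever $a > |b|$, with $a := -t_3$ and $b := \sqrt{t_1^2+t_2^2}$. The condition $a > |b|$ is again the assumption $\vct{t}\in -\Omega$, and $a^2 - b^2 = t_3^2 - (t_1^2+t_2^2)$, yielding the claimed value $\cL\nu(\vct{t}) = 2\pi / \sqrt{t_3^2 - t_1^2 - t_2^2}$. There is no real obstacle here; the only point requiring care is checking that the assumption $\vct{t}\in -\Omega$ is used twice (once for convergence of the $r$--integral uniformly in $\theta$, once for positivity of $a^2-b^2$), and this is precisely the content of the cone condition.
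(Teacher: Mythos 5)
Your proof is correct, but it proceeds along a genuinely different route from the paper's. The paper keeps the order of integration as written, computes the angular integral first by expanding the exponential in a power series and using the Wallis integrals $\int_0^{2\pi}\cos^n(\theta)\,d\theta$ (in effect producing $2\pi I_0\bigl(r\sqrt{t_1^2+t_2^2}\bigr)$), then integrates term by term in $r$ against $e^{t_3 r}$ using the Gamma function, and finally resums the resulting series via the binomial expansion of $(1-x)^{-1/2}$ — exactly the same machinery it set up for Proposition~\ref{P:Ltinv}, with convergence requiring $t_3<0$ and $(t_1^2+t_2^2)/t_3^2<1$ only surfacing at the end. You instead invoke Tonelli to do the $r$--integration first, which collapses to the elementary closed form $\bigl(-t_3-t_1\cos\theta-t_2\sin\theta\bigr)^{-1}$ because the cone condition $\sqrt{t_1^2+t_2^2}<-t_3$ makes the exponent's coefficient uniformly negative in $\theta$, and then you finish with the classical evaluation $\int_0^{2\pi}\frac{d\theta}{a-b\cos\theta}=\frac{2\pi}{\sqrt{a^2-b^2}}$ for $a>|b|$. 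Your version is shorter and makes the role of the hypothesis $\vct{t}\in-\Omega$ transparent at both stages, at the cost of importing that standard angular integral (which one must still prove or cite, e.g.\ by the Weierstrass substitution or residues); the paper's version is longer but self-contained and reuses the series computation already needed for the absolutely continuous case. One cosmetic remark: the phase $\tau$ is only defined when $t_1^2+t_2^2>0$, so strictly you should note that the case $t_1=t_2=0$ is immediate — the same implicit convention appears in the paper's proof, so this is not a substantive gap.
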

 \proof
Using $\vct{x}= ( r\cos(\theta), r\sin(\theta), r)$, we have
 \[ \cL \nu \lp{\vct{t}} = \int_0^\infty \int_0^{2\pi} e^{t_3r}
 e^{t_1r\cos(\theta) + t_2r\sin(\theta)} \, d\theta\,dr. \]
 Therefore
  \[ \cL \nu \lp{\vct{t}} = \int_0^\infty e^{t_3r}  \int_0^{2\pi}
 e^{r\sqrt{t_1^2 +t_2^2} \cos\lp{\theta -\tau}} \, d\theta\,dr, \]
 where $\tau \in [0, 2\pi)$ is the only angle such that
\[
\left\{
\begin{array}{ccc}
\cos (\tau)  & = & \frac{t_1}{\sqrt{t_1^2 + t_2^2}}\\
\sin (\tau) & = & \frac{t_2}{\sqrt{t_1^2 + t_2^2}}
\end{array}
\right..
\]
As in Proposition~\ref{P:Ltinv}, we have
\begin{eqnarray*}
I_1:=\int_0^{2\pi}e^{r\sqrt{t_1^2 + t_2^2}\cos(\theta)}d\theta & = & \sum_{n = 0}^{\infty}
\frac{r^{2n}(t_1^2 + t_2^2)^n}{(2n)!}\frac{(2n - 1)!!}{2^{n}n!}2\pi \nonumber\\
& = & \sum_{n = 0}^{\infty}
\frac{r^{2n}(t_1^2 + t_2^2)^n}{2^{2n}(n!)^2}2\pi.
\end{eqnarray*}
We compute now the second integral, with respect to $r$.
\begin{eqnarray*}
I_2 & := & \sum_{n = 0}^{\infty}\frac{(t_1^2 + t_2^2)^n}{2^{2n}(n!)^2}2\pi
\int_0^{\infty}r^{2n}e^{t_3r}dr.
\end{eqnarray*}
Here, for convergence at infinity, we must assume that $t_3 < 0$.
Making the change of variable $s = r|t_3|$,
$dr = \frac{1}{|t_3|}ds$. We obtain:
\begin{eqnarray*}
I_2 & = & \sum_{n = 0}^{\infty}\frac{(t_1^2 + t_2^2)^n}{2^{2n}(n!)^2}2\pi
\int_0^{\infty}s^{2n}e^{-s}\frac{1}{|t_3|^{2n + 1}}ds \nonumber\\
& = & \sum_{n = 0}^{\infty}
\frac{(t_1^2 + t_2^2)^n}{2^{2n}(n!)^2|t_3|^{2n + 1}}2\pi(2n)!.
\end{eqnarray*}
Therefore, for  $\frac{t_1^2 + t_2^2}{t_3^2} <1$, we have
\[
I_2  =  \frac{2\pi}{|t_3|} \frac{1}{\sqrt{1-4\frac{t_1^2 + t_2^2}{4t_3^2}}}
 =  \frac{2\pi}{\sqrt{t_3^2-\lp{t_1^2 + t_2^2}}}.\]
\endproof

\end{document}